\documentclass[a4paper]{amsart}
\usepackage{tcolorbox}
\usepackage{amsthm}
\usepackage{amsmath}
\usepackage{amsfonts}
\usepackage{amssymb}
\usepackage{bbm}
\usepackage{bbding}
\usepackage[shortlabels]{enumitem}
\usepackage{mathrsfs}
\usepackage{amscd}
\usepackage{tikz-cd}
\usepackage[active]{srcltx}
\usepackage{mathtools}
\usepackage{thm-restate}
\usepackage[colorlinks,final,backref=page,hyperindex]{hyperref}
\usepackage{enumitem}

\textheight 7.8 in
\textwidth 5.1 in
\def\antish{{\scriptstyle\text{\rm !`}}}

\newtheorem{thm}{Theorem}[section]
\newtheorem{prop}[thm]{Proposition}
\newtheorem{lem}[thm]{Lemma}
\newtheorem{cor}[thm]{Corollary}
\newtheorem{prop-def}[thm]{Proposition-Definition}
\newtheorem{conj}[thm]{Conjecture}
\newtheorem{defn}[thm]{Definition}

\newtheorem{remark}[thm]{Remark}
\newtheorem{exam}[thm]{Example}

\DeclareMathOperator{\HH}{HH}

\DeclareMathOperator{\Hom}{Hom}
\DeclareMathOperator{\im}{im}

\DeclareMathOperator{\Ext}{Ext}
\DeclareMathOperator{\Tor}{Tor}

\newcommand{\nc}{\newcommand}

\nc{\calG}{\mathcal{G}}
\nc{\cok}{\mathrm{coker}}
\nc{\Ann}{\mathrm{Ann}}
\nc{\Tip}{\mathrm{Tip}}
\nc{\NonTip}{\mathrm{NonTip}}
\nc{\rmH}{\mathrm{H}}
\nc{\rmU}{\mathrm{U}}
\nc{\rmHH}{\mathrm{HH}}
\nc{\antishriek}{{\mathord{\textexclamdown}}}
\nc{\Id}{\mathrm{Id}}
\nc{\ID}{\mathrm{ID}}

\nc{\M}{{\mathcal{M}}}
\nc{\B}{{\mathcal{B}}}
\nc{\U}{{\mathcal U}}
\nc{\D}{{\mathcal D}}

\newcommand{\Path}{{\mathcal{P}}}
\newcommand{\V}{{\mathcal{V}}}

\nc{\la}{\langle}
\nc{\ra}{\rangle}
\nc{\bfk}{\mathbf{k}}

\nc{\ot}{\otimes}

\nc{\red}{\color{red}}

\nc{\mbibitem}[1]{\bibitem{#1}} 
\allowdisplaybreaks[4]

\title{A Computation of the Tamarkin--Tsygan Calculus}

\begin{document}
	\author[Chen]{Jun Chen}
	\address{Jun Chen\\
		School of Mathematics\\
		Nanjing University\\
		Nanjing 210093\\
		Jiangsu\\
		PR China}
	\email{mathcj@nju.edu.cn}

	\author[Ruan]{Xiabing Ruan}
\address{Xiabing Ruan\\
	University of Strasbourg}
\email{ruanxiabing@gmail.com}

	\author[Yang]{Jia Yang}
\address{Jia Yang\\
Beijing Normal University}
\email{202221130026@mail.bnu.edu.cn}

\date{\today}

\keywords{Koszul algebras, Hochschild (co)homology, Algebraic Morse Theory, Gerstenhaber bracket, cup and cap product}

\subjclass[2020]{
	16E05, 	
	16E40  	
}
\begin{abstract}
We compute the full Tamarkin--Tsygan calculus of a Koszul algebra whose global dimension exceeds the number of generators. Our results show that even for algebras possessing an economic presentation and agreeable homological properties, the Hochschild (co)homology, as well as the structure of the Tamarkin--Tsygan calculus may exhibit a rather intricate behavior.
\end{abstract}
\maketitle

\section{Introduction}

The protagonist of this article is the algebra $A=\bfk \la x,y,z  \ra/( x^{2} +yx,xz,zy)$, introduced by Iyudu and Shkarin in \cite{I18} as a counterexample to the conjecture in the textbook \cite{PP05} of Polishchuk and Positselski:

\begin{conj}[\cite{PP05}, Section 7]\label{conj:Polishchuk}
    A Koszul algebra with finite global dimension $n$ has at least $n$ generators.
\end{conj}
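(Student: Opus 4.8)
The plan is to route the conjecture through Koszul duality, turning a statement about $A$ into one about the top degree of its quadratic dual $A^{!}$. A Koszul algebra is in particular quadratic, $A = T(V)/(R)$ with $V = A_1$ and $R \subseteq V \ot V$, so its number of generators is $d := \dim_{\bfk} V$. For a Koszul algebra one has $\Ext^{\bullet}_A(\bfk,\bfk) \cong A^{!}$ as graded algebras, with $\Ext^{i}_A(\bfk,\bfk)$ concentrated in internal degree $i$; hence the global dimension of $A$, being the projective dimension of the trivial module, equals the top nonvanishing degree $n$ of $A^{!}$: that is, $A^{!}_{n} \neq 0$ and $A^{!}_{n+1} = 0$. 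Since $A^{!}$ is again generated in degree $1$ by $\dim_{\bfk} V^{*} = d$ elements, the conjecture becomes the purely internal statement: a finite-dimensional Koszul algebra $B := A^{!}$ generated by $d$ elements of degree $1$ has top degree $n \le d$. First I would record this reduction and the grading/Loewy dictionary: writing $\mathfrak{m} = B_{\ge 1}$ for the graded radical, one has $\mathfrak{m}^{i} = B_{\ge i}$, so $n$ is the Loewy length of $B$ minus one and $d = \dim_{\bfk} \mathfrak{m}/\mathfrak{m}^{2}$.

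Second, I would harvest the numerical constraints from the Koszul Hilbert-series identity $h_A(t)\,h_{A^{!}}(-t) = 1$, where $h_C(t) = \sum_{i} (\dim_{\bfk} C_i)\,t^{i}$. Writing $h_{A^{!}}(t) = 1 + d\,t + a_2 t^{2} + \cdots + a_n t^{n}$ with each $a_i \ge 0$ and $a_n \neq 0$, the reciprocal $h_A(t) = 1/h_{A^{!}}(-t)$ must be a power series with nonnegative coefficients, since these coefficients are the dimensions $\dim_{\bfk} A_i$. The idea is to exploit this positivity: a degree-$n$ polynomial $P(t) = 1 - d\,t + \cdots$ whose reciprocal is a nonnegative power series is strongly constrained — by Pringsheim's theorem its smallest-modulus root is real and positive — and I would try to convert such constraints into $n \le d$, calibrating against the model case $h_{A^{!}}(t) = (1+t)^{d}$, $h_A(t) = (1-t)^{-d}$ of a polynomial ring, where equality $n = d$ holds.

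Third, and this is the crux, I would seek a structural argument inside $B = A^{!}$ realizing the $n$ available degrees as $n$ independent directions in the $d$-dimensional space $B_1$. Fixing $0 \neq w \in B_n$ and using that $B$ is generated in degree $1$ while $B_{n+1} = 0$, one would build a flag witnessing that each passage $B_{i} \to B_{i+1}$ must, through the degree-$2$ relations, consume a genuinely new direction of $B_1$; strictness of all $n$ steps would give $n \le \dim_{\bfk} B_1 = d$. I expect this final step to be the main obstacle. The positivity of $1/h_{A^{!}}(-t)$ is only a necessary condition and does not by itself force $n \le d$ — nonnegative integer solutions with $n > d$ are numerically admissible — so the Hilbert-series input cannot close the gap, and one must bring in the full strength of Koszulity, namely the distributivity of the lattice generated inside $V^{\ot m}$ by the subspaces $V^{\ot i} \ot R \ot V^{\ot j}$, rather than quadraticity alone. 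Whether even that suffices to bound the top degree by the generator count is exactly the delicate point probed by the algebra $A$ of the introduction, and I would expect a naive version of the argument to stall precisely here.
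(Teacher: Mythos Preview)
The statement you are attempting to prove is \emph{false}, and the paper does not prove it: on the contrary, the very first paragraph of the introduction recalls that the algebra $A = \bfk\langle x,y,z\rangle/(x^{2}+yx,\,xz,\,zy)$ was introduced by Iyudu and Shkarin precisely as a \emph{counterexample} to Conjecture~\ref{conj:Polishchuk}. This $A$ is Koszul, has $3$ generators, and has global dimension $4$; indeed Proposition~\ref{prop:K} exhibits the Koszul dual coalgebra $A^{\antish}$ with $A^{\antish}_{4} \neq 0$ and $A^{\antish}_{5} = 0$, so in your notation $B = A^{!}$ has top degree $n = 4 > 3 = d$.

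Your reduction to the dual side is correct and your caution is well placed: the Hilbert-series positivity constraint is genuinely too weak, and the structural flag argument you sketch in the third step cannot work in general, because the distributivity of the Koszul lattice does not force each new degree to consume a fresh direction in $B_{1}$. The counterexample shows exactly this: with $\dim V = 3$ one can still have a nonzero intersection $\bigcap_{i+j=3} V^{\otimes i}\otimes R\otimes V^{\otimes j}$ in $V^{\otimes 4}$, namely the one-dimensional space spanned by $(x+y)xzy$. So the ``crux'' you identify is not merely the hard part of a valid proof but the point at which the conjecture actually fails.
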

It has been proved in \cite{I18} that $A$ is Koszul and has global dimension $4$. Clearly, $A$ has $3$ generators. Hence, $A$ serves as a counterexample to Conjecture~\ref{conj:Polishchuk}.

In commutative algebra, the Auslander--Buchsbaum--Serre theorem (\cite{AB57,S56}) states that for Noetherian local rings, regularity is equivalent to having finite global dimension, and in such a case, the global dimension equals to the dimension of the tangent space at the point corresponding to the maximal ideal. While the algebra $A$ is not a local ring, its completion $\hat A=\bfk\la\!\la x,y,z\ra\!\ra/(x^2+yx,xz,zy)$ is, with the same number of generators and the same global dimension as $A$. If the non-commutative analogue of Auslander--Buchsbaum--Serre theorem were true, then $\hat A$ should be ``regular'' in a good sense. However, the fact that the global dimension exceeds the number of generators suggests that such non-commutative analogue is unlikely to hold. This points toward a kind of singularity that the algebra $A$ may possess in the non-commutative setting, if one attempts to use naive generalizations of various notions from commutative algebra. For us, this algebra serves as an interesting nontrivial example for which we compute the structure known as the Tamarkin--Tsygan calculus.

The notion of Tamarkin--Tsygan calculus was originally introduced by Gel'fand, Daletskii, Tamarkin and Tsygan in \cite{GDTl89, TT00, TTs2005}. It encodes a rich compatible structure on the Hochschild (co)homology of an associative algebra. The Tamarkin--Tsygan calculus of $A$ consists of the following data (by convention, the Hochschild cohomology is concentrated in non-positive degrees):

\begin{itemize}
    \item  the Hochschild homology $\HH_*(A)$ and Hochschild cohomology $\HH^*(A)$;
    \item   the cup product $\cup:\HH^*(A)\otimes \HH^*(A)\to \HH^*(A)$ of degree $0$;
    \item     the cap product $\cap:\HH^*(A)\otimes \HH_*(A)\to \HH_{*}$ of degree $0$;
    \item    the Gerstenhaber bracket $[-,-]:\HH^*(A)\otimes \HH^*(A)\to \HH^*(A)$ of degree $1$;
    \item     the Connes' differential $B:\HH_*(A)\to \HH_*(A)$ of degree 1;
\end{itemize}
with appropriate compatibility conditions.

In the commutative case, the Hochschild–Kostant–Rosenberg isomorphism in \cite{HKR62} states that, for a commutative algebra $C$ over a field of characteristic zero,
\begin{itemize}
    \item The Hochschild homology $\HH_*(C)$ is isomorphic to the K\"ahler differential forms of $C$;
    \item  the Hochschild cohomology $\HH^*(C)$ is isomorphic to the poly vector fields of $C$.
\end{itemize}
Later, Calaque proved in \cite{CV10,CRV12} that
\begin{itemize}
    \item The cup product of $\HH^*(C)$ is the wedge product of poly vector fields;
    \item  the cap product is the contraction of differential forms by multi-vector fields, i.e.\ the interior product.
\end{itemize}
It was also discovered and proved by Rinehart and Connes in \cite{C90,GSR63} that
\begin{itemize}
    \item The Connes' differential on $\HH_*(C)$ is the de Rham differential.
\end{itemize}

While certain developments in non-commutative algebra and geometry exist, a precise geometric meaning of the Hochschild (co)homology and the Tamarkin--Tsygan calculus in the non-commutative case remains elusive. Nevertheless, it has been proved by Armenta, Keller in \cite{AK17,AK19} that the Tamarkin--Tsygan calculus is a derived invariant. Later, Tamaroff proved independently in \cite{T21} that it is a homotopy invariant.

In this article, we compute the Tamarkin--Tsygan calculus on the Hochschild (co)homology of the algebra $A$ mentioned above via algebraic Morse theory, hoping to give some guidance for a better understanding of the possible geometry in non-commutative case.
The upshot of our computation can be summarized as follows. The  Gerstenhaber brackets are rather complicated with no apparent pattern to concise description.
Nevertheless, the Hochschild cohomology, equipped with this bracket, contains subalgebras isomorphic to the positive part of $W(0,0)$ (Corollary \ref{cor: sub Witt of HH}) -- the semidirect product  of the Witt algebra with tensor density modules \cite{C1909,KR87,M86} --
whose basis is  $\{L_n, I_n\}_{n\geqslant 1}$
and Lie bracket is given by
$$[L_n, L_m] = (m-n)L_{m+n}, \ [L_n, I_m] = m I_{m+n}.$$
In contrast, the cup products, cap products and the Connes' differentials are more tractable:
All cup products in the Hochschild cohomology of $A$ vanish, except those with the unit, and  the cap products and the Connes' differentials admit simple compact formulas.
To the best of our knowledge, this is the first complete description of the Tamarkin–Tsygan calculus on the Hochschild homology and cohomology of a non-monomial algebra.

The paper is structured as follows.
Section \ref{Sect: preliminaries} sets up basic facts about the algebra $A$, including the  Koszul resolution  as the minimal free resolution and a $\bfk$-basis.
Section \ref{Sect: Hochschild co + homo} determines the Hochschild homology and cohomology of $A$ by providing explicit vector space bases.
Section \ref{Sect: cup and cap} computes the cup and cap products.
In particular, we prove that the cup product on the Hochschild cohomology of $A$ is trivial except for the multiplication by the unit.
Section \ref{Sect: Connes and Gersten}  establishes  homotopy inverses between the Koszul resolutions  and the bar resolution of $A$ via algebraic Morse theory,
  and subsequently use these maps to compute both the Connes' differential and the Gerstenhaber bracket on  its Hochschild (co)homology.
  The explicit formulas for the cap product and the Gerstenhaber bracket are rather involved and have therefore been placed in Appendix \ref{Sect: Result GBracket}.
  Appendix \ref{Sect: proof}   is devoted to the detailed computation of the Hochschild (co)homology of $A$ and the comparison morphisms.
Appendix \ref{Sect: TTCalculus} recalls basic definitions and properties of the Tamarkin--Tsygan calculus.

 Throughout this paper, $\bfk$ is a field of characteristic $0$. All the unadorned tensor products and $\Hom$ spaces are over $\bfk$. We always denote by $A$ the algebra $\bfk\langle x,y,z\rangle / (x^2+yx,xz,zy)$, and $A^e:=A\otimes A^{\mathrm{op}}$ the enveloping algebra of $A$.
 For simplicity of notation, $n_{1,p}$ will denote the sequence $n_1, \dots, n_p.$

\section*{Acknowledgements}

We would like to express our deepest gratitude to Vladimir Dotsenko for suggesting the research topic and for his continuous guidance throughout this project. His advice on mathematics, writing, and other academic aspects is invaluable.
We are also grateful to Guodong Zhou for kindly making the initial connection between the first two collaborators, which make this joint work possible.
We appreciate Bernhard Keller for the valuable discussions and for his helpful suggestions on potential future directions.
This work was supported by the China Scholarship Council (CSC), the China National Postdoctoral Program for  Innovative Talent (BX20240156), and the ANR project HighAGT (ANR-20-CE40-0016), whose financial support is gratefully acknowledged.

\tableofcontents

\section{Preliminaries}\label{Sect: preliminaries}

In this section, we collect some basic facts about the algebra $A$, including a minimal free resolution (the Koszul resolution) and a $\bfk$-basis.

It is shown in \cite{DC17,I18} that the algebra $A$ is Koszul.
Hence the two-sided Koszul resolution (or two-sided Koszul complex) (see \cite[Chapter~3]{LVOperad} for relevant notions) is a minimal resolution of $A$:
\begin{prop}\label{prop: minimal resolution of A}
	The weight components of the Koszul dual coalgebra $A^{\antish }$ of $A$ are given by:
	$$ \begin{array}{l}
		V_0:= A^{\antish  (0)} =\bfk,  \\
		V_1:=  A^{\antish (1)} =\bfk\{x,y,z\}, \\
		V_2:= A^{\antish (2)} =\bfk\{( x+y) x,xz,zy\},  \\
		V_3:=  A^{\antish (3)} =\bfk\{( x+y) xz,xzy\},  \\
		V_4: = A^{\antish (4)} =\bfk \{ (x+y)xzy\}, \\
		V_n:= A^{\antish  (n)}=0 \quad \text{for} \quad n\geqslant 5.
	\end{array}$$
	whence the \textbf{two-sided Koszul resolution} $K_{\bullet}$ 	of	 $A$  is
	\begin{equation*}\label{eq: minimal resolution}
     0 \to A \otimes  V_4 \otimes  A  \xrightarrow{d_4} A \otimes  V_3 \otimes  A \xrightarrow{d_3} A \otimes  V_2 \otimes  A \xrightarrow{d_2} A \otimes  V_1  \otimes  A \xrightarrow{d_1} A\otimes  A \xrightarrow{d_0} A \to 0,  \end{equation*}
	where the differential  is  defined by the following equations:
	$$\begin{array}{l}
		d_4(1\otimes (x+y)xzy  \otimes 1) = (x+y)\otimes xzy \otimes 1 + 1\otimes(x+y)xz \otimes y ,\\
		d_3(1\otimes(x+y)xz \otimes 1)= (x+y)\otimes xz  \otimes 1 - 1\otimes  (x+y)x \otimes z, \\
		d_3(1\otimes   xzy\otimes 1)= x\otimes zy \otimes 1 - 1\otimes xz  \otimes y,\\
		d_2(1\otimes (x+y)x \otimes 1)=(x+y)\otimes x \otimes 1 +1\otimes x \otimes x + 1\otimes y \otimes x,\\
		d_2(1\otimes xz \otimes 1)=x\otimes z \otimes 1 +1\otimes x \otimes z,\\
		d_2(1\otimes zy \otimes 1)=z\otimes y \otimes 1 +1\otimes z \otimes y,\\
		d_1(1\otimes a \otimes 1)= a\otimes 1- 1\otimes a, a=x,y,z, \\
		d_0(1\ot 1) = 1.
	\end{array}$$
\end{prop}

Following \cite{Green99}, we will use Gr\"{o}bner basis theory to construct a $\bfk$-basis of $A$.
\begin{prop}[\cite{DC17}]\label{prop:basisA}
With respect to the degree-lexicographic order where $x > y > z$, the Gröbner basis of $A$ is given by
	$$\mathcal G = \{x y^n x + y^{n+1}x, xz, zy \}_{n\geqslant 0}. $$
\end{prop}

\begin{cor}[\cite{Green99}]\label{cor:k-basis_A}
	The algebra $A$ admits a $\bfk$-basis $\mathcal B$  consisting of the following elements:
		$$ \begin{array}{lll}	
			1,    ~  y^{n_0},  ~  y^{n_0} x,  &  y^{n_0} x y^{n_1}, ~  x, ~ xy^{n_1}, &  \\
			y^{n_0} x y^{n_1} z^{m_1}   \cdots z^{m_p}, & 	y^{n_0} x y^{n_1} z^{m_1}   \cdots z^{m_p}x, & 	y^{n_0} x y^{n_1} z^{m_1}   \cdots z^{m_p} x y^{n_{p+1}},  \\
			x y^{n_1} z^{m_1}   \cdots z^{m_p},  &  x y^{n_1} z^{m_1}   \cdots z^{m_p}x, & x y^{n_1} z^{m_1}   \cdots z^{m_p} x y^{n_{p+1}}, \\
			y^{n_1} z^{m_1}   \cdots z^{m_p}, &  y^{n_1} z^{m_1}   \cdots z^{m_p}x,  &   y^{n_1} z^{m_1}   \cdots z^{m_p} x y^{n_{p+1}}, \\
			z^{m_1}   \cdots z^{m_p}, &  z^{m_1}   \cdots z^{m_p}x,  & z^{m_1}   \cdots z^{m_p} x y^{n_{p+1}},
		\end{array}     $$
			for $p\geqslant 1$ and $ n_0,n_1,\cdots, n_{p+1},$ $m_1,\cdots, m_p  \geqslant 1.$ 
Here,   the notation $z^{m_1}\cdots z^{m_p}$ is shorthand for the expression
	$$ z^{m_1}xy^{n_{2}}z^{m_{2}} \cdots z^{m_{p-1}}xy^{n_p}z^{m_p} $$
in which the symbols a power of $z$, $x$, and a power of $y$  occur cyclically in that order.  
The shorthand notation $z^{m_1}\cdots z^{m_p}$   will be used systematically throughout the paper.
\end{cor}

\section{Hochschild (co)homology}\label{Sect: Hochschild co + homo}

In this section, we will explicitly provide a $\bfk$-basis for the Hochschild homology  and  Hochschild cohomology of $A$.
For the sake of readability, the computational details of this section are deferred to Appendix~\ref{Sect: proof}.

\subsection{Hochschild Homology}\label{Sect: Hochschild homology}

By virtue of Proposition~\ref{prop: minimal resolution of A},  $\HH_\bullet(A)$ is isomorphic to the homology of the chain complex $A \ot_{A^e} K_\bullet .$ Under the isomorphisms $A\otimes_{A^e}(A\otimes V_\bullet\otimes A)\cong A\otimes V_\bullet$, it suffices to compute the homology of the following complex:
$$ 0\to A\otimes V_4\xrightarrow{d_4}A\otimes V_3\xrightarrow{d_3}A\otimes V_2\xrightarrow{d_2}A\otimes V_1\xrightarrow{d_1}A\to 0,$$
where the boundary maps $d_i$, by abusing the notation, are given by the following equations:
$$\begin{array}{l} d_{1} :A\otimes V_{1}\rightarrow A,\quad  a\otimes p\mapsto ap-pa,  p=x,y,z, \\
	d_{2} :A\otimes V_{2}\rightarrow A\otimes V_{1}, \quad
	\begin{cases}
		a\otimes  (x+y)x    \mapsto a( x+y) \otimes x+xa\otimes x+ xa\otimes y \\
		b\otimes  xz  \mapsto bx\otimes z+zb\otimes x\\
		c\otimes zy  \mapsto cz\otimes y+yc\otimes z,
	\end{cases} \\
	d_{3} :A\otimes V_{3}\rightarrow A\otimes V_{2},\quad
	\begin{cases}
		b\otimes  (x+y)xz  \mapsto b( x+y) \otimes xz-zb\otimes  (x+y)x\\
		a\otimes xzy\mapsto ax\otimes zy-ya\otimes xz,
	\end{cases}\\
	d_{4} :A\otimes V_{4} \rightarrow A\otimes V_{3}, \quad
	a\otimes (x+y)xzy \mapsto a( x+y) \otimes xzy+ya\otimes (x+y)xz.
\end{array}$$

The computed set of bases for $\HH_{\bullet}(A)$ is as follows; their proofs can be found in Appendix~\ref{Sect: proof}.

		\begin{prop}\label{prop:HH_1}
			$\HH_1(A)$ has a $\bfk$-basis consisting of the following elements:
\begin{enumerate}[label=(\roman*)]
  \item $\alpha_1(n)  = x^{n-1} \ot x,$
  \item $\beta_1(n)   = y^{n-1} \ot y,$
  \item $\gamma_1(n)  = z^{n-1} \ot z,$
  \item $ \theta_1(n_{1, p};m_{1, p})   =  \sum_{u,v,w}  wu \otimes v, $
\end{enumerate}
where $n, p, n_1, \cdots, n_p, m_1, \cdots, m_p$ are positive integers, and  the sum in $\theta_1$  is taken over all $u,w\in \B, $ and $ v\in \{x,y,z\}$  such that 
$$uvw = xy^{n_1}z^{m_1}\cdots z^{m_p} \in \bfk \langle x,y,z \rangle.$$
\end{prop}

		\begin{prop}\label{prop:HH_0}
			$\HH_0(A)$ has a $\bfk$-basis consisting of the following elements:
			\begin{enumerate}[label=(\roman*).]
				\item$\zeta_0 = 1,$
				\item $  \alpha_0(n) = x^n,$
				\item $  \beta_0(n) = y^{n}, $
				\item $ \gamma_0(n) = z^{n}, $
                \item $\overline\epsilon_{0}(n_{1,p}; m_{1,p})  =\overline{xy^{n_1}z^{m_1}\cdots z^{m_p}},$
			\end{enumerate}
where      $n, p, n_1 \cdots, n_p, m_1, \cdots, m_p$ are positive integers,   and the bars mean that the powers $(n_1,m_1),\cdots,(n_p,m_p)$ are determined up to a cyclic permutation, i.e., 
    $$\overline{xy^{n_1}z^{m_1}\cdots z^{m_p}} = \overline{xy^{n_{i}}z^{m_{i}}\cdots  z^{m_p}xy^{n_1}z^{m_1} \cdots z^{m_{i-1}}}$$ for $i=1,\cdots,p$. 
		\end{prop}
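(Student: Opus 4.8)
The plan is to use the classical identification $\HH_0(A)=\cok(d_1)=A/[A,A]$, where $d_1\colon A\otimes V_1\to A$ is the map displayed above, $a\otimes p\mapsto ap-pa$, so that $[A,A]:=\mathrm{im}(d_1)$ is the $\bfk$-span of the commutators $ap-pa$ with $a\in A$, $p\in\{x,y,z\}$ (equivalently, of all $ab-ba$). The single structural fact I need follows at once: for any word $u=q_1q_2\cdots q_k$ in the generators $x,y,z$, its class in $\HH_0(A)$ is invariant under cyclic rotation, $\overline{q_1q_2\cdots q_k}=\overline{q_2\cdots q_kq_1}$, because $q_1\cdot(q_2\cdots q_k)-(q_2\cdots q_k)\cdot q_1\in[A,A]$ once $q_2\cdots q_k$ is read as one element of $A$.

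For the \emph{spanning} statement I would run through the eighteen types of basis monomials of Corollary~\ref{cor:k-basis_A} and, in each case, cyclically rotate the monomial to a convenient starting point and then rewrite the resulting word in the basis $\mathcal{B}$ using the Gröbner rules $x^2=-yx$, $xz=0$, $zy=0$. Types $0,1,2,7$ are already among the claimed forms $1,\ y^{n_0},\ y^{n_0}x,\ x$; Types $3$ and $8$ rotate to $y^{n_0+n_1}x$, resp.\ $y^{n_1}x$; Type~$15$ with $p=1$ is $z^{m_1}$. The monomials of Types $4,5,6,10,11,12,16$ all become $0$: after at most two rotations and at most one application of $xy^{n}x=-y^{n+1}x$ or $x^2=-yx$ one reaches a scalar multiple of a word containing a forbidden subword $zy$ or $xz$ (e.g.\ a Type~$4$ monomial $y^{n_0}xy^{n_1}z^{m_1}\cdots z^{m_p}$ rotates to $xy^{n_1}z^{m_1}\cdots z^{m_p}y^{n_0}$, which contains $z^{m_p}y^{n_0}$). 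Finally Types $9,14,17$ and Type~$15$ with $p\geqslant2$ all rotate to a Type~$13$ monomial $y^{n_1}z^{m_1}\cdots z^{m_p}x$ (possibly after first moving a trailing power of $y$ to the front or coalescing a $z$-block), and rotating a Type~$13$ monomial by one full block $y^{n_i}z^{m_i}x$ yields exactly the cyclic identifications $\overline{y^{n_1}z^{m_1}\cdots z^{m_p}x}=\overline{y^{n_i}z^{m_i}\cdots z^{m_p}xy^{n_1}z^{m_1}\cdots z^{m_{i-1}}x}$. This shows the six families span $\HH_0(A)$ and yields all relations among them.

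For \emph{linear independence} I would build the obstruction explicitly. Let $W$ be the $\bfk$-vector space freely spanned by formal symbols indexed by the six claimed families, and let $\pi\colon A\to W$ be the $\bfk$-linear map sending each monomial of $\mathcal{B}$ to the symbol (or to $0$) that the reduction recipe above assigns to it. By construction $\pi$ is surjective and sends each claimed basis element to the corresponding generator of $W$, so it suffices to verify that $\pi$ annihilates $[A,A]$, i.e.\ that $\pi(wp)=\pi(pw)$ for all $w\in\mathcal{B}$ and $p\in\{x,y,z\}$ — with the caveat that $wp$ and $pw$ must first be expanded in the basis $\mathcal{B}$, as they need not be reduced words. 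Granting this, $\pi$ factors through a surjection $\HH_0(A)\twoheadrightarrow W$ carrying the claimed elements to a basis of $W$; hence they are linearly independent, and with the spanning statement they form a $\bfk$-basis of $\HH_0(A)$.

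The main obstacle is precisely this last verification that $\pi$ kills every commutator $wp-pw$: a finite but bookkeeping-heavy case analysis over the eighteen types of $w$ and the three generators $p$, the delicate cases being those in which $wp$ or $pw$ contains a forbidden subword $xy^{n}x$, $xz$ or $zy$ and so genuinely changes under Gröbner reduction, where one must check the change is invisible to $\pi$. A possible shortcut avoiding this analysis is a dimension count: since $\HH_i(A)=0$ for $i\geqslant2$ (proved above) and each $V_i$ is concentrated in internal degree $i$, the Euler characteristic of the complex $A\otimes V_\bullet$ in each degree $d$ gives $\dim_\bfk\HH_0(A)_d=\dim_\bfk\HH_1(A)_d+\sum_{i=0}^{4}(-1)^i(\dim_\bfk V_i)(\dim_\bfk A_{d-i})$, and one then checks, using the basis of $\HH_1(A)$ from Proposition~\ref{prop:HH_1} and the monomial basis $\mathcal{B}$, that the number of claimed degree-$d$ basis elements equals this integer; combined with the spanning statement this again gives the result.
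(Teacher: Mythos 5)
Your strategy is sound, and the spanning half is essentially the paper's own argument: the paper proves the proposition by normalizing representatives against the explicit generating set of $\im d_1$ recorded in Lemma~\ref{lem:imd_1}, and its Remark~\ref{rmk:HH_0} is precisely your computation in $A/[A,A]$ — rotating each type of monomial from Corollary~\ref{cor:k-basis_A} and reducing by $x^2=-yx$, $xz=0$, $zy=0$ until it either dies or lands on one of the claimed forms, with type-$13$ words identified exactly under block rotation. Where you genuinely diverge is the independence half. The paper reads independence off the same explicit list in Lemma~\ref{lem:imd_1} (after the coefficients of the eliminated terms are set to zero, the only residual relations among the surviving monomials are the cyclic identifications), whereas you propose either a retraction $\pi\colon A\to W$ onto the free span of the claimed symbols, checked to kill all commutators $wp-pw$, or a graded Euler-characteristic count using $\HH_{\geqslant 2}(A)=0$ and the basis of $\HH_1(A)$ from Proposition~\ref{prop:HH_1}. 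Both devices are legitimate and, if executed, would make the ``no further collapse'' point more explicit than the paper does; the Euler-characteristic route is valid because each $V_i$ is concentrated in weight $i$ and the graded pieces of $A$ are finite dimensional, though it trades the commutator check for a necklace-counting comparison with the $\theta_1$-classes. Do note, however, that the verification you flag as the ``main obstacle'' is not peripheral bookkeeping but the entire content of the independence claim: checking $\pi(wp)=\pi(pw)$ over $w\in\mathcal{B}$, $p\in\{x,y,z\}$ is the same finite computation the paper packages into Lemma~\ref{lem:imd_1}, your delicate cases being exactly the entries there where $d_1$ produces a genuinely rewritten word. As written your text is a correct plan with that decisive step deferred; carrying out either the commutator check or the dimension comparison is what turns it into a proof.
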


		\begin{prop}
The Hochschild homology $\HH_n(A)$ of $A$ vanishes for $n \neq 0, 1$.
		\end{prop}

\begin{remark} \label{rmk:HH_0} 
    We know that $\HH_0(A) \cong A/[A,A]$, see for example \cite{W19}. So for monomials in $A$, moving one letter from one side to the other does not change their class in $\HH_0(A)$.
      The image of the basis elements of $A$ in Proposition~\ref{prop:basisA} can be rewritten in $\HH_0(A)$ as: 
\begin{itemize}
        \item[(i)]$ xy^{n} = yxy^{n-1}= \cdots = y^{n-1}xy = y^{n}x = (-1)^{n} \alpha_0(n+1) ;$
        \item[(ii)]  $y^{n} = \beta_0(n)$ and $z^n =\gamma_0(n)$; 
        \item[(iii)] $ xy^{n_1}z^{m_1} \cdots z^{m_p}  = y^{n_1}z^{m_1} \cdots z^{m_p}x = y^{n_1-1}z^{m_1} \cdots z^{m_p}xy = \cdots     $
        \item[] $ = z^{m_1} \cdots z^{m_p} x y^{n_1} = z^{m_1-1} xy^{n_2} z^{m_2} \cdots z^{m_p}x y^{n_1}z = \cdots = \overline\epsilon_0(n_{1,p}; m_{1,p} );  $
        \item[(iv)] the image of other basis elements of $A$ are zero in  $\HH_0(A)$.
    \end{itemize}
\end{remark}

\subsection{Hochschild Cohomology}\label{Sect: Hochschild cohomology}

By virtue of Proposition~\ref{prop: minimal resolution of A},   $\HH^{\bullet}(A)$  can be realized as the cohomology of the complex $\Hom_{A^{e}}(K_{\bullet} ,A).$
Under the isomorphism $ \Hom_{A^e}(A\ot V_{\bullet} \ot A, A) \cong \Hom(V_{\bullet}, A),$ it suffices to compute the cohomology of the following complex:
\begin{equation*}\label{eq: cochain complex for cohomology} 0 \to A \xrightarrow{d_1^{*}}  \Hom(V_1, A) \xrightarrow{d_2^{*}}  \Hom(V_2, A)  \xrightarrow{d_3^{*}} \Hom(V_3, A) \xrightarrow{d_4^{*}}  \Hom(V_4, A) \to 0,\end{equation*}
where the differential $d_i^*$ is given by
\begin{itemize}
 \item[(i)]  for $a\in A$, the coboundary $d_1^{\ast}(a) \in  \Hom(V_1, A)$ is defined as
 $$d_1^*(a)  =  \left( \begin{array}{rcl}  x & \mapsto  & xa-ax\\ y & \mapsto & ya-ay \\ z  & \mapsto & za-az,  \end{array}  \right)$$
 \item[(ii)] for $f= \left( \begin{array}{rcl}  x & \mapsto  & a\\ y & \mapsto & b \\ z  & \mapsto & c  \end{array} \right)   \in \Hom(V_1, A),$ the coboundary $d_2^{\ast}(f) \in  \Hom(V_2, A)$ is defined as
 $$d^{*}_2(f) =  \left( \begin{array}{rcl}  (x+y)x  & \mapsto & (x+y)a + ax +bx \\ xz & \mapsto &  xc + az\\ zy & \mapsto & zb + cy,  \end{array} \right)$$
 \item[(iii)] for $f = \left( \begin{array}{rcl}  (x+y)x  & \mapsto & a \\ xz & \mapsto & b \\ zy & \mapsto & c  \end{array} \right) \in  \Hom(V_2, A),$ the coboundary $d_3^{\ast}(f) \in  \Hom(V_3, A)$ is defined as
 $$ d^{*}_3(f)  = \left( \begin{array}{rcl} (x+y)xz & \mapsto &  (x+y)b - az \\ xzy & \mapsto & xc - by, \end{array} \right)  $$
 \item[(iv)] for $f =  \left( \begin{array}{rcl} (x+y)xz & \mapsto & a \\ xzy & \mapsto & b \end{array} \right) \in  \Hom(V_3, A),$  the coboundary $d_4^{\ast}(f) \in  \Hom(V_4, A)$ is defined as
$$d^{*}_4(f) = \big(   (x+y)xzy  \mapsto (x+y)b + ay \big). $$
\end{itemize}

The computed set of bases for $\HH^{\bullet}(A)$ is as follows; their proofs can be found in Appendix~\ref{Sect: proof}.
\begin{prop}\label{prop: HH^0}
    The  degree $0$ Hochschild cohomology $\HH^{0}(A) $ of $A$  is isomorphic to $\bfk$.
\end{prop}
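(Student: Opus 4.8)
The plan is to compute $\HH^0(A) = \ker d_1^* \subseteq A$, i.e. the subspace of elements $a \in A$ such that $xa = ax$, $ya = ay$ and $za = az$. This is precisely the center $Z(A)$ of $A$, and the claim is that $Z(A) = \bfk \cdot 1$. First I would write a general element $a \in A$ in the basis $\mathcal{B}$ from Corollary~\ref{cor:k-basis_A} as in Remark~\ref{rmk:basisA}, and then impose the three commutation relations one at a time, extracting constraints on the coefficients $a_i$ by comparing leading/trailing letters of monomials.

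The key steps, in order: (1) Consider the condition $ya = ay$. Multiplying a basis monomial on the left by $y$ versus on the right by $y$ changes which end the extra $y$ sits on; since the basis monomials are determined by their letter patterns, matching the $y$-left-multiples against the $y$-right-multiples should force most coefficients to vanish — in particular any monomial that does not begin and end with compatible $y$-powers gets killed, and one is left with only $a_0 1$ plus terms of the form $y^{n_0}$ (Type 1), plus possibly $y^{n_0} x$-type or palindromic-in-$y$ contributions. (2) Then impose $xa = ax$ on the survivors: left-multiplication by $x$ uses the relation $x \cdot y^n x = -y^{n+1}x$ and $xz = 0$, $xa$ for $a$ starting with $z$ vanishes, whereas $ax$ appends $x$ on the right; comparing these should eliminate the remaining $y^{n_0}$ and $y^{n_0}x$ terms with $n_0 \geq 1$ (e.g.\ $x \cdot y^{n_0} = xy^{n_0}$ is a Type 8 monomial which cannot equal $y^{n_0} x$). (3) Finally check $za = az$ to clean up any leftover, using $zy = 0$ and $zx$ staying reduced. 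At each stage the argument is the same bookkeeping device used already in Lemma~\ref{lem:Koszulness} and Proposition~\ref{prop:HH_1}: expand in $\mathcal{B}$, look at the first letter, and match.

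The main obstacle will be the case analysis over the eighteen monomial types in Corollary~\ref{cor:k-basis_A}: one has to be careful with the monomials involving the $x \cdot y^n x + y^{n+1} x$ relation (so that $x$ acting on the left of $y^{n_0}x$ and related monomials produces reduced expressions with sign changes), and with the ``cyclic word'' monomials of Types 4--17 where multiplying by $x$, $y$, or $z$ on one side may or may not trigger a relation. A clean way to organize this is to observe that the associated graded (or the monomial combinatorics) already shows that no nonconstant homogeneous element can commute with all three generators: any such element, when multiplied by $y$ on the left and right, would need its monomials to be ``$y$-bi-homogeneous'' in a way incompatible with being nonconstant unless all monomials are pure powers of $y$, and those fail to commute with $x$. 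I would present this as the backbone and relegate the exhaustive type-by-type verification to a short computation, citing Remark~\ref{rmk:basisA} for the normal form. The conclusion is $\HH^0(A) = \ker d_1^* = \bfk$.
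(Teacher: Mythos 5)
Your proposal is correct and follows essentially the same route as the paper: identify $\HH^{0}(A)=\ker d_1^{*}$ with the center, expand $a$ in the basis $\mathcal{B}$ of Remark~\ref{rmk:basisA}, and kill coefficients by matching reduced monomials under the commutation conditions. The only difference is that you impose $ya=ay$ before $xa=ax$ (the paper does the reverse and then observes $za=az$ is automatic), which is an immaterial reordering of the same bookkeeping.
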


\begin{prop}\label{prop: HH^1}
The Hochschild cohomology  $\HH^{-1}(A)$  has a $\bfk$-basis  consisting of the following elements:
\begin{enumerate}[(\roman*)]
  \item $A^{-1}   = \left( \begin{array}{rcl}  x &  \mapsto & x \\ y  &  \mapsto  &  y \\ z  &  \mapsto  &  0  \end{array} \right),$
  \item $B^{-1}(n) = \left( \begin{array}{rcl}  x  & \mapsto &  y^{n} x \\  y &  \mapsto & y^{n+1} \\ z &  \mapsto  & 0  \end{array} \right),$
  \item $C^{-1}(n)  = \left( \begin{array}{rcl}  x & \mapsto &  y^{n} x \\ y & \mapsto  &    - x y^{n}  \\ z & \mapsto &  zxy^{n-1}  \end{array} \right),   $
  \item $D^{-1}(n,i) = \left( \begin{array}{rcl}  x & \mapsto &   y^{n} x  \\ y & \mapsto  &  - y^{i}xy^{n-i} \\ z & \mapsto & 0  \end{array} \right),  $
  \item $E^{-1}(n_{0,p+1};m_{1,p}) = \left( \begin{array}{rcl}  x & \mapsto & 0 \\ y & \mapsto  & y^{n_0} x y^{n_1}z^{m_1} \cdots z^{m_p}x y^{n_{p+1}-1}\\ z & \mapsto & 0  \end{array} \right), $
  \item $F^{-1}(n_{1,p+1};m_{1,p}) = \left( \begin{array}{rcl}  x & \mapsto & 0 \\ y & \mapsto  & y^{n_1}z^{m_1} \cdots z^{m_p}xy^{n_{p+1}-1} \\ z & \mapsto & 0  \end{array} \right),  $
  \item $G^{-1}(n_{1,p+1};m_{1,p}) = \left( \begin{array}{rcl}  x & \mapsto & 0 \\ y & \mapsto  & x y^{n_1}z^{m_1} \cdots z^{m_p}xy^{n_{p+1}} \\ z & \mapsto & -z x y^{n_1}z^{m_1} \cdots z^{m_p}xy^{n_{p+1}-1}  \end{array} \right), $
  \item $H^{-1}(n_{2,p};m_{1,p}) = \left( \begin{array}{rcl}  x & \mapsto & 0 \\ y & \mapsto  & 0 \\ z & \mapsto & z^{m_1}  \cdots  z^{m_p}  \end{array} \right), $
\end{enumerate}
 where $ 1\leqslant i \leqslant n-1, $ and $n, p,  n_0,n_1,\cdots, n_{p+1}, m_1, \cdots, m_p $ are positive integers.
\end{prop}

\begin{prop}\label{prop: HH^2}
The Hochschild cohomology   $\HH^{-2}(A)$  has a $\bfk$-basis consisting of the following elements:
\begin{enumerate}[(\roman*)]
\item $ A^{-2}  = \left( \begin{array}{rcl}  (x+y)x & \mapsto & x \\ xz & \mapsto & 0 \\ zy & \mapsto & 0  \end{array} \right), $
\item $B^{-2}(n_{2,p};m_{1,p})  = \left( \begin{array}{rcl} (x+y)x & \mapsto  &   z^{m_1}  \cdots z^{m_p}x \\ xz & \mapsto  & 0 \\ zy & \mapsto & 0  \end{array} \right),$
\end{enumerate}
 where $ p, n_2,\cdots, n_p, m_1, \cdots, m_p $ are positive integers.
\end{prop}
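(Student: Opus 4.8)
The plan is to replay, one homological degree higher, the three‑step argument behind Proposition~\ref{prop: HH^1}, now with the pair $(d_2^*,d_3^*)$ in place of $(d_1^*,d_2^*)$ and with $\Phi^{-2}:=\bfk\{A^{-2},\,B^{-2}(n_{2,\cdots,p};m_{1,\cdots,p})\}$ playing the role of the span of the claimed basis. First I would check that these candidates are cocycles and are linearly independent: substituting into $d_3^*(f)((x+y)xz)=(x+y)f(xz)-f((x+y)x)z$ and $d_3^*(f)(xzy)=xf(zy)-f(xz)y$, each of $A^{-2}$ and $B^{-2}(\cdots)$ kills $xz$ and $zy$ and sends $(x+y)x$ to $x$, resp.\ $z^{m_1}\cdots z^{m_p}x$, so the two outputs reduce to $-xz$, resp.\ $-z^{m_1}\cdots z^{m_p}xz$, and to $0$, and all of these vanish because $xz=0$ in $A$. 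Independence is immediate since the values at $(x+y)x$ are pairwise distinct monomials of the basis $\mathcal B$; in particular every element of $\Phi^{-2}$ is recovered from its value at $(x+y)x$ alone, so $\Phi^{-2}\subseteq\ker d_3^*$.

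Next I would prove $\Phi^{-2}\cap\im d_2^*=0$. If $f=d_2^*(g)\in\Phi^{-2}$ with $g\colon x\mapsto a,\ y\mapsto b,\ z\mapsto c$, then $f$ satisfies $xc+az=0$ and $zb+cy=0$, while $(x+y)a+ax+bx$ is forced to be a $\bfk$‑combination of $x$ and of the monomials $z^{m_1}\cdots z^{m_p}x$. One cannot argue from that last component alone -- for instance $z^{m_1}\cdots z^{m_p}x=d_2^*(g_0)((x+y)x)$ for $g_0\colon y\mapsto z^{m_1}\cdots z^{m_p}$ and $x,z\mapsto0$, but this $g_0$ violates $zb+cy=0$ -- so one first solves the coupled system $xc+az=0$, $zb+cy=0$ for $(a,b,c)$, using Lemma~\ref{lem:Koszulness}, the one‑sided Koszul resolutions introduced in its proof, and the monomial basis of Corollary~\ref{cor:k-basis_A}; this is the direct analogue of the nested subspaces $S\supseteq S'$, $T\supseteq T'$ in the proof of Proposition~\ref{prop: HH^1}. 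One then reads off the possible values of $(x+y)a+ax+bx$ and checks, by comparing leftmost monomials, that none of the nonzero ones lies in $\bfk\{x,\,z^{m_1}\cdots z^{m_p}x\}$, whence $f=0$.

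Finally I would show $\ker d_3^*/(\im d_2^*\oplus\Phi^{-2})=0$. For $f\in\ker d_3^*$ write $a=f((x+y)x)$, $b=f(xz)$, $c=f(zy)$. Using the explicit monomial description of $\im d_2^*$ obtained exactly as for $\im d_1^*$ in Lemma~\ref{lem:imd^1} (to be recorded in Appendix~\ref{Sect: ProofLemma}), I would equip $\im d_2^*\oplus\Phi^{-2}$ with a ``leftmost‑term reduced'' basis, working first on the $(x+y)x$‑component, then on the $xz$‑component among elements vanishing on $(x+y)x$, then on the $zy$‑component among elements vanishing on both, and replace $f$ by a representative whose $a,b,c$ avoid all the corresponding leftmost monomials. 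Feeding this normalized $f$ into the cocycle equations $(x+y)b-az=0$ and $xc-by=0$ and matching monomials (again via Lemma~\ref{lem:Koszulness}), exactly as in Proposition~\ref{prop: HH^1}, then forces every surviving coefficient of $a,b,c$ to vanish. Combining the three steps yields $\HH^{-2}(A)\cong\Phi^{-2}$ with the stated basis.

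The conceptual skeleton is routine; the main obstacle is the bookkeeping in the last two steps. It requires the precise monomial bases of $\im d_2^*$ in each of its three components, together with a somewhat delicate combinatorial analysis -- organized by which letter a monomial begins and ends with -- to run the leftmost/rightmost‑term cancellation arguments. The one genuinely new feature compared with $\HH^{-1}$ is that $\Phi^{-2}$ is concentrated in the single component $(x+y)x$, so one cannot dispose of the three components one at a time: the coupled pair $xc+az=0$, $zb+cy=0$ must be resolved first, and only then does the $(x+y)x$‑component become tractable.
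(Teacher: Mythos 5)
Your proposal is correct and follows essentially the same route as the paper's proof: exhibit $\Phi^{-2}=\bfk\{A^{-2},B^{-2}(\cdot)\}$ inside $\ker d_3^*$, show $\Phi^{-2}\cap\im d_2^*=0$ by the component-wise filtration (the paper's nested subspaces $S\supseteq S'$, $T\supseteq T'$, fed by the explicit description of $\im d_2^*$ recorded in Lemma~\ref{lem:imd^2}), and then kill $\ker d_3^*/(\im d_2^*\oplus\Phi^{-2})$ by normalizing representatives against leftmost-term reduced bases and plugging into $(x+y)b-az=0$, $xc-by=0$. The only deviations — processing the components in the order $(x+y)x$, $xz$, $zy$ rather than the paper's $xz$, $zy$, $(x+y)x$, and solving the coupled system for preimages via Lemma~\ref{lem:Koszulness} instead of reading the filtered image components directly — are cosmetic and do not change the argument.
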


\begin{prop}\label{prop: HH^4}
The Hochschild cohomology   $\HH^{-4}(A)$  has a $\bfk$-basis consisting of the following elements:
\begin{enumerate}[(\roman*)]
\item  $A^{-4}  : (x+y)xzy \mapsto  1, $
\item  $A^{-4}(n)  :  (x+y)xzy  \mapsto y^{n}x,  $
\item  $ B^{-4}(n_{0,p};m_{1,p})  : (x+y)xzy  \mapsto  y^{n_0-1}xy^{n_1}z^{m_1} \cdots z^{m_p},    $
\item $C^{-4}(n_{0,p};m_{1, p})  : (x+y)xzy  \mapsto  y^{n_0-1}xy^{n_1}z^{m_1} \cdots z^{m_p}x,    $
\item $D^{-4}(n_{2, p};m_{1, p})  : (x+y)xzy  \mapsto z^{m_1} \cdots z^{m_p},    $
\item $E^{-4}(n_{2, p};m_{1, p})  :    (x+y)xzy  \mapsto  z^{m_1}  \cdots z^{m_p} x,  $
\end{enumerate}
 where $ n,p, n_0,n_1,\cdots, n_{p+1}, m_1,\cdots, m_p$ are positive integers.
\end{prop}

\begin{prop}\label{prop: HH^ greater than 4}
The Hochschild cohomology $\HH^{-n}(A)$ of $A$ vanishes for $n \neq 0,$ $1,2,4$.
\end{prop}

\section{Cup Product and Cap Product} \label{Sect: cup and cap}

\subsection{Cup Product}\label{Sect: Cupproduct}

Following \cite[Section 3]{BLS17}, the cup product on Hochschild cohomology, when restricted to Koszul cochains, takes the following explicit form.
For any Koszul $m$-cochain $f:V_m \to A$ and any Koszul $n$-cochain  $g:V_n \to A,$ their cup bracket is
\begin{equation}\label{eq: cup for Koszul cochain}
f\cup g(a_1\cdots a_{m+n}) : = (-1)^{mn}f(a_1 \cdots a_m) \cdot g(a_{m+1} \cdots a_{m+n}),\end{equation}
 for any $a_1\cdots a_{m+n} \in V_{m+n}.$

\begin{prop}
	$(\HH^{\bullet}(A), \cup) $ is a graded commutative associative  algebra with trivial product except for the unit action.
	\end{prop}
\begin{proof}
	Since $\HH^{-n}(A)$ vanishes for $n=3$ or $n\geqslant 5,$ the cup product vanishes on the component  $\HH^{-m}(A)\otimes \HH^{-n}(A)$ for $(m,n) =(1,2),(2,1),(1,3),(3,1)$ and for all $(m,n)$ such that $m+n > 4$.
Since $\HH^{0}(A) = \bfk$, the cup product involving $\HH^0(A)$ is  simply  multiplication by the unit.
Hence, the only nontrivial cup product to verify is its restriction to $\HH^{-1}(A) \otimes \HH^{-1}(A)$ and $\HH^{-2}(A) \otimes \HH^{-2}(A).$

Consider the cup product that is restricted to $\HH^{-1}(A) \otimes \HH^{-1}(A)$. 
 Due to the graded commutativity of the cup product, we only need to compute $f \cup g$ for basis elements $f$ and $g$ (as given in Proposition~\ref{prop: HH^1}) such that the type of $f$ precedes the type of $g$ 
in the prescribed order.
A direct computation shows that, as cochains (before quotienting by  $\im d_{\bullet}^{*}$), all those cup products  are zero. The only exception is the following product, which is a coboundary:
$$\begin{array}{rl}& C^{-1}(n)\cup F^{-1}(n_{1,p+1}; m_{1,p}) \\ = & \left(\begin{array}{rcl}
			(x+y)x & \mapsto &  0  \\
			xz & \mapsto &  0 \\
			zy & \mapsto &  -zxy^{n-1}\cdot y^{n_1}z^{m_1} \cdots z^{m_p}xy^{n_{p+1}-1}
		\end{array} \right)  \\ =& d_2^*\left(\begin{array}{rcl}
			x & \mapsto & 0 \\
			y & \mapsto & -xy^{n+n_1-1}z^{m_1} \cdots z^{m_p}xy^{n_{p+1}-1} \\
			z & \mapsto & 0 \\
		\end{array}\right).\end{array}$$
Similarly, a direct computation shows that the cup product restricted to $\HH^{-2}(A) \otimes \HH^{-2}(A)$ is also zero. 
	\end{proof}

\subsection{Cap Product}\label{Sect: Capproduct}

Following \cite[Section 4]{BLS17}, the cap product of Hochschild cohomology and homology, when restricted to Koszul cochains and chains, takes the following explicit form.
 For any Koszull $m$-cochain $f$ and any Koszul $n$-chain $w = a_0 \ot a_1\cdots a_n \in A \ot V_n,$ their cap product is
 \begin{equation}\label{eq: cap for Koszul cochain}
w \cap f : =(-1)^{mn} a_0 f(a_1\cdots a_m) \ot a_{m+1}\cdots a_n \in A \ot V_{n-m}.  \end{equation}

Since $\HH_{n}(A)$ vanishes for $n \neq 0,1$, and since $\HH^0(A) = \bfk$ is generated by the unit, the only possibly nontrivial cap product is
$$\cap:  \HH_{1}(A) \ot \HH^{-1}(A) \to \HH_{0}(A).$$
By writing the basis of $\HH_1(A)$ from Proposition~\ref{prop:HH_1}  in the form
$$ n \alpha_1(n) = \sum_{uvw =x^n} wu \ot v, \quad n \beta_1(n) = \sum_{uvw = y^n} wu \ot v, \quad  n\gamma_1(n) = \sum_{uvw= z^n} wu \ot v, $$
$$ \theta_1(n_{1,p}; m_{1,p}) = \sum_{uvw = xy^{n_1}z^{m_1}\cdots z^{m_p}} wu \ot v,$$
the cap product admits the following expressions.
 \begin{prop} The action of  	$\HH^{\bullet}(A)$ on  $\HH_{\bullet}(A)$   induced by the cap product is determined by the following equations.
 For $\Omega =  x^n, y^n, z^n, \text{ or } xy^{n_1}z^{m_1} \cdots z^{m_p}$
 	with $p \geqslant 1, n,n_1,\cdots n_p,$ $m_1,\cdots, m_p\geqslant 1, $ and for $f\in \HH^{-1}(A),$
 	\begin{equation}\label{eq: cap formula}  \sum_{uvw=\Omega} wu \ot v \cap f =  -\sum_{uvw=\Omega}  uf(v)w + \im(d_1). \end{equation}
 \end{prop}
The above formulas are immediate consequences of Equation~\eqref{eq: cap for Koszul cochain} and Remark~\ref{rmk:HH_0}; for better readability,  explicit expressions for the cap product are provided in Appendix~\ref{Sect: Result GBracket}.
We now illustrate the computation through a concrete example. The remaining cases can be handled similarly.
\begin{exam}
Let $f = B^{-1}(n), w = \theta_1(n_{1};m_{1}),$ we have
    $$ \begin{array}{rll}
    w\cap f    &=  - f(x)y^{n_1}z^{m_1} - \sum\limits_{i=1}^{n_1} xy^{i-1}f(y)y^{n_1-i}z^{m_1} - \sum\limits_{i=1}^{m_1} xy^{n_1}z^{i-1}f(z)z^{m_1-i}&  + \im (d_1)\\
           &  = -y^nxy^{n_1}z^{m_1} - n_1 x y^{n_1+n}z^{m_1}    &  + \im (d_1) \\
          &=  -n_1 \overline{\epsilon}_0(n_1+n; m_1),
    \end{array}$$
  where the last equality follows from Remark~\ref{rmk:HH_0}.
\end{exam}

\section{Connes' Differential and Gerstenhaber bracket}\label{Sect: Connes and Gersten}
The cup and cap product can be deduced directly from the Koszul resolution. To get the Connes' differential  and the Gerstenhaber bracket,
we need the comparison morphisms between the Koszul resolution  and the Bar resolution of $A$, which we compute using algebraic Morse theory.

\subsection{Comparison morphisms} \label{Sect: Comparison morphisms}
We begin by recalling the key notations and main results of algebraic Morse theory from  \cite{CLZ24}, with minor modifications for simplicity.

Let $(X_{\bullet}, d_{\bullet})$ be a complex of vector spaces.
Suppose that for each $n\in \mathbb{Z}$,  there exists a decomposition into direct sums of subspaces
$$X_n=\oplus_{i\in I_n} X_{n, i}.$$
So $d_n:X_n\to X_{n-1}$ has a matrix presentation  $d_n=(d_{n, ji})$ with $i\in I_n, j\in I_{n-1}.$
We shall construct a  weighted  quiver  $Q=Q_{X_{\bullet}}$  as follows:
\begin{itemize}
	
	\item[(Q1)]
	The  vertices are the pairs   $(n, i)$ with $ n\in \mathbb{Z}, i\in I_n$;
	
	\item[(Q2)] if a map $d_{n, ji}$ with $i\in I_n, j\in I_{n-1}$ does not vanish, then draw an arrow from $(n, i)$ to $(n-1, j)$;
	
	\item[(Q3)] for an arrow in (Q2), its  weight  is just the map  $d_{n, ji}$.
\end{itemize}

A \textit{partial  matching} is a full subquiver $\M$ of $Q$ such that
\begin{itemize}
	\item[(M1)]each vertex in $Q$  belongs to at most  one arrow of $\M$;
	
	\item[(M2)] each arrow in $\M$  has its weight  invertible as a linear maps.
\end{itemize}

Given a  partial matching  $\M$, we can construct a new weighted quiver $Q^\M$ with additional dotted arrows as follows:
\begin{itemize}
	\item[(QM1)]Keep everything for all  arrows  which are not in $\M$ (they will be called \textit{thick arrows});
	
	\item[(QM2)] For an  arrow in $\M$, replace it by  a new \textit{dotted arrow} in the reverse direction and the weight of this new arrow is the negative  inverse of the weight of the original arrow.
\end{itemize}

A vertex of $Q$   a \textit{critical vertex} (with respect to $\M$), if it is not incident to  any arrow in $\M$, and
 a path in $Q^\M$ is called \textit{zigzag} if dotted arrows and  thick arrows  appear  alternately.
We adopt the following notations:

\begin{center}
\begin{tabular}{| r | l |}

   $\V_n$ & $=\{(n, i)\mid i\in I_n\}$ \\[0.1cm]

   $\U_n$ &  $= \{(n,i) \in \V_n  \mid  (n,i) \xrightarrow{d_{n,ji}}(n-1,j) \in \M \}$ \\[0.1cm]

    $\D_n$ &  $= \{(n-1,j) \in \V_n  \mid  (n,i) \xrightarrow{d_{n,ji}}(n-1,j) \in \M \}$ \\[0.1cm]

    $\V_n^\M$ &  the set of critical vertices \\[0.1cm]

    $\varphi_p^\M$ &   $=w_n \circ w_{n-1} \circ \cdots \circ w_1,$ for $p:\bullet \xrightarrow{w_1} \bullet \cdots \bullet \xrightarrow{w_n} \bullet$   \\[0.1cm]

    $\Path^\M((n, i), (m, j))$  & the set of all zigzag paths from $(n, i)$ to $(m, j)$  in $Q^\M$
\end{tabular}
\end{center}

A \textit{Morse matching} is a partial  matching which satisfies the  \textit{local  finiteness hypothesis} in \cite[Section 3]{CLZ24} called (LFH).

The following Morse condition is frequently used in practical computations.
\begin{prop}[\cite{CLZ24}]\label{Prop: sufficient condition for Morse matching}
	Let $\M$ be a partial matching of $Q$. If any zigzag path from $(n,i)$ is of finite length for each vertex $(n,i)$ in $ Q^{\M},$ then $\M$ is a Morse matching.
\end{prop}

 Given a Morse matching $\M$, we can construct a new complex (called \textit{Morse complex}) $( {X}_{\bullet}^\M, d_{\bullet}^\M)$ as follows:

The complex ${X}_{\bullet}^\M$ has its $n$-th component $X_n^\M=\oplus_{(n, i)\in \V_n^\M} X_{n, i}$ and the differential
$d_n^\M: X_n^\M\to X_{n-1}^\M$ has the matrix presentation
$d_n^\M=(d_{n, ji}^\M)$ with $(n, i)\in \V_n^\M, (n-1, j)\in \V_{n-1}^\M$ and where
$d_{n, ji}^\M: X_{n, i}\to X_{n-1, j} $  is defined to be
$$d_{n, ji}^\M=\sum_{p\in  \Path^\M((n, i), (n-1, j))} \varphi^\M_p.$$

The main theorem of algebraic Morse theory is as follows.
\begin{thm}[\cite{CLZ24}]\label{Thm: main result of algebraic Morse theory}
	\begin{itemize}
		
		 \item[(i)] Within the above setup,
		$({X}_{\bullet}^\M, d_{\bullet}^\M)$ is a complex.

		 \item[(ii)]   Define maps \begin{align*}
			f_n: X_n^\M & \rightarrow X_n \\
			x \in X_{n,i} & \mapsto f_n(x):=x+ \sum_{(n, j)\in \U_n}\sum_{p\in  \Path^\M   ((n,i), (n, j))} \varphi^\M_p(x),
		\end{align*}
		and
		\begin{align*}
			g_n: X_n & \rightarrow X^\M_n \\
			x \in X_{n,i} & \mapsto g_n(x):=\left\{\begin{array}{ll} \sum\limits_{(n, j)\in \V^\M_n}\sum\limits_{p\in   \Path^\M   ((n,i), (n, j)) } \varphi^\M_p(x),& (n, i)\in \D_n\\
				x,  &  (n,i) \in \V_n^\M \\
				0 & (n, i)\in \U_n. \end{array}\right.
		\end{align*}
		Then $f_{\bullet}: {X}_{\bullet}^\M   \rightarrow X_{\bullet}$ and $ g_{\bullet}: X_{\bullet}   \rightarrow {X}_{\bullet}^\M $ are chain maps which are homotopy equivalent:		
$gf=\Id_{{X}_{\bullet}^\M}$ and $fg\sim \Id_{X_{\bullet}}$ via the homotopy
		\begin{align*}
			{\theta_n}: X_n & \rightarrow X_{n+1} \\
			x \in X_{n,i} & \mapsto {\theta_n}(x):= \left\{\begin{array}{ll} \sum\limits_{(n+1, j)\in \U_{n+1}}\sum\limits_{p\in \Path^\M((n,i), (n+1, j)) } \varphi^\M_p(x),& (n, i)\in \D_n\\
				0 & otherwise. \end{array}\right.
		\end{align*}
	\end{itemize}
\end{thm}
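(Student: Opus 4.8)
The plan is to treat Theorem~\ref{Thm: main result of algebraic Morse theory} as an instance of the homological perturbation lemma (equivalently, as an iterated Gaussian elimination organized so that it still makes sense when $\M$ is infinite), and to prove it by checking a short list of algebraic identities, each of which unwinds into a purely combinatorial statement about the composition and cancellation of zigzag paths in $Q^{\M}$. The preliminary observations are: by (LFH) there are, between any two vertices, only finitely many zigzag paths, so that each of the sums defining $d_{\bullet}^{\M}$, $f_{\bullet}$, $g_{\bullet}$, $\theta_{\bullet}$ is genuinely finite; every summand $X_{n,i}$ is of exactly one of three kinds, critical (in $\V_{n}^{\M}$), an ``up'' vertex (in $\U_{n}$), or a ``down'' vertex; and by (M1) a dotted arrow leaving a vertex exists precisely when that vertex is the target of a matching arrow, and is then unique. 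Consequently a dotted arrow always points up into an up-vertex, while the only way to reach a critical vertex along a zigzag path is through a thick arrow; in particular no nontrivial zigzag path runs between two critical vertices of the same degree. These uniqueness statements are exactly what let one cut a zigzag path at its first or last thick edge in a single prescribed way.

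First I would verify $g_{\bullet}f_{\bullet}=\Id_{X_{\bullet}^{\M}}$, which is immediate: for $x\in X_{n,i}$ with $(n,i)$ critical, $f_{n}(x)=x+(\text{terms in }\U_{n}\text{-summands})$, while $g_{n}$ is the identity on critical summands and annihilates $\U_{n}$-summands. Next I would prove, by a straightforward if tedious analysis of how zigzag paths concatenate, that $d_{\bullet}^{\M}=g_{\bullet}\,d_{\bullet}\,f_{\bullet}$ and that $f_{\bullet}$, $g_{\bullet}$ are chain maps ($d_{\bullet}f_{\bullet}=f_{\bullet}d_{\bullet}^{\M}$ and $d_{\bullet}^{\M}g_{\bullet}=g_{\bullet}d_{\bullet}$); each such identity asserts that two sums over zigzag paths with prescribed endpoints coincide, which follows from unique factorization of a path at a thick edge, the extra contributions (those exhibiting two consecutive thick edges) cancelling in pairs by $d_{\bullet}^{2}=0$. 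The main computation is the homotopy identity $f_{n}g_{n}-\Id_{X_{n}}=d_{n+1}\theta_{n}+\theta_{n-1}d_{n}$: expanding all three terms as path sums, the nontrivial paths contributing to $f_{n}g_{n}$ are matched in sign-preserving bijection with those contributing to $d_{n+1}\theta_{n}+\theta_{n-1}d_{n}$, the trivial path accounting for the $-\Id_{X_{n}}$, and once more $d^{2}=0$ removes the surplus. Part~(i) is then formal: from $d^{\M}=gdf$ and the homotopy identity, $(d^{\M})^{2}=g\,d\,(fg)\,d\,f=g\,d\,(\Id+d\theta+\theta d)\,d\,f=g\,d^{2}f+g\,d^{2}\theta\,d\,f+g\,d\,\theta\,d^{2}f=0$. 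For part~(iii) I would set $Y_{\bullet}:=\ker g_{\bullet}$, a graded complement of the critical summands: since $gf=\Id$, the map $fg$ is an idempotent chain endomorphism of $X_{\bullet}$ with image isomorphic (via $g,f$) to $X_{\bullet}^{\M}$ and kernel $Y_{\bullet}$, giving $X_{\bullet}\cong X_{\bullet}^{\M}\oplus Y_{\bullet}$; moreover $Y_{\bullet}$ is null homotopic because $\theta_{\bullet}$ takes values in $\U$-summands, which $g$ kills, so that $-\theta_{\bullet}$ restricts to $Y_{\bullet}$ and the homotopy identity there reduces to $\Id_{Y_{\bullet}}=-(d\theta+\theta d)|_{Y_{\bullet}}$.

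The substantive obstacle is the cluster of path-combinatorial verifications $d^{\M}=gdf$, $df=fd^{\M}$, $d^{\M}g=gd$ together with the homotopy identity: bare-handed, these are a bookkeeping exercise in how zigzag paths decompose and recombine. The shortcut I would actually take, to minimize that labor, is to recognize the whole statement as the homological perturbation lemma. Decompose $d_{\bullet}=\partial_{\bullet}+t_{\bullet}$, where $\partial_{\bullet}$ retains only the invertible matched components and $t_{\bullet}$ is the remainder; the matching data then exhibit $(X_{\bullet},\partial_{\bullet})$ as a strong deformation retract of $(X_{\bullet}^{\M},0)$, with the evident inclusion $\iota$ and projection $\pi$ and with contracting homotopy $h$ built from the inverses of the matched weights -- the side conditions ($h^{2}=0$, $h\iota=0$, $\pi h=0$) being finite, local consequences of (M1) and of invertibility -- while $t_{\bullet}$ is a perturbation whose associated geometric series converge precisely because of (LFH). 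The perturbation lemma then delivers (i) and (ii) formally, and a direct expansion identifies the transferred differential, the inclusion, the projection and the homotopy with the stated zigzag-path sums; part~(iii) is the standard fact that the output of the perturbation lemma is a deformation retract of the perturbed complex whose complementary summand -- here $Y_{\bullet}$ -- is contractible. On this route the only genuine checks are the strong-deformation-retract axioms for $(\partial_{\bullet},\iota,\pi,h)$, which are immediate, and the transcription of the perturbed data into path language.
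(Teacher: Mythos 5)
This theorem is not proved in the paper at all: it is recalled verbatim from \cite{chen2024algebraicmorsetheoryhomological} (going back to Sk\"oldberg's algebraic Morse theory), and the paper only uses it as a black box to build the comparison morphisms. So there is no internal proof to compare your argument against; the comparison can only be with the standard proofs in that literature, which proceed either by the direct zigzag-path bookkeeping or, as you propose, via the homological perturbation lemma.

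Judged on its own terms, your outline is sound and follows a legitimate, standard route. The reduction to an SDR plus perturbation is correctly set up: $\partial$ (the matched part of $d$) squares to zero precisely because of (M1), the contraction $h$ built from the negative inverses of the matched weights satisfies the side conditions for the same local reasons, (LFH) is exactly what makes the geometric series $\sum (th)^k$ well defined, and expanding the perturbed data does reproduce the zigzag-path formulas for $d^{\M}$, $f$, $g$, $\theta$ (a dotted arrow corresponding to an application of $h$, a thick arrow to a component of $t$). Your derivations of $gf=\Id$, of (i) from $d^{\M}=gdf$ together with the homotopy identity, and of (iii) via the idempotent $fg$ with $Y=\ker g$ and the restricted homotopy $-\theta|_Y$ are all correct, including the needed observation that $\theta$ lands in $\U$-summands and hence preserves $\ker g$. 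The one place where the write-up is too breezy is the direct-verification branch: the identities $d^{\M}=gdf$, $df=fd^{\M}$, $d^{\M}g=gd$ and the homotopy identity do not follow only from ``unique factorization at a thick edge'' plus $d^2=0$; an essential second cancellation mechanism is the backtracking cancellation, where a dotted arrow followed by the matched component of $d$ composes to $-\Id$ because the dotted weight is the \emph{negative} inverse. Since you ultimately route everything through the perturbation lemma, where these cancellations are packaged once and for all, this is a presentational gap rather than a mathematical one, but if you keep the direct branch you should state that mechanism explicitly.
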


Next, we consider the reduced two-sided bar resolution $B(A,A)$ of $A$ (see Appendix~\ref{Sect: TTCalculus}), which has  the following direct sum decomposition: 
$$B(A,A)_n = A \otimes  \overline{A}^{\otimes  n} \otimes  A \cong \bigoplus_{w_1,\cdots, w_n \in \B_+} A \otimes  \bfk \{(w_1,\cdots, w_n) \} \otimes  A , \quad \text{for}~ n\geqslant 1, $$
where $\B_{+} = \B \setminus \{ 1 \}$ is defined in Corollary~\ref{cor:k-basis_A}.
The Morse matching presented below is derived from the work of Sk\"{o}ldberg in \cite{S06}.
\begin{prop} [\cite{S06}]\label{prop: Morse matching for Anick resolution} Let  $\M$ be a full subquiver of $Q_{B}$ consisting  of the following arrows
	\begin{itemize}
		 \item[(i)] $  (x,u_1,\cdots,u_n) \xrightarrow{-1\otimes  1} (xu_1,\cdots,u_n);$
		 \item[(ii)] $ (y,v_1,\cdots,v_n) \xrightarrow{-1\otimes  1} (yv_1,\cdots,v_n);$
		 \item[(iii)] $ (z,w_1,\cdots,w_n) \xrightarrow{-1\otimes  1} (zw_1,\cdots,w_n);$
		 \item[(iv)] $ (z,y, v_1, \cdots, v_n)   \xrightarrow{1\otimes  1} (z, yv_1,\cdots,v_n);$
		 \item[(v)] $\begin{tikzcd}[baseline = 15.5pt]
		                 (x,y^{k_1}x, \cdots, y^{k_n}x ,u_1', \cdots,u_m') \ar[rd, "(-1)^{n+1}\otimes  1"]& \\
 & (x,y^{k_1}x, \cdots, y^{k_n}x u_1',\cdots,u_m');
		             \end{tikzcd} $
		 \item[(vi)]   $\begin{tikzcd}[baseline = 15.5pt]
   (x,y^{k_1}x, \cdots, y^{k_{n-1}}x,z ,v_1',\cdots,v_m')\ar[rd, "(-1)^{n+1}"] & \\
&  \hspace{-1cm}(x,y^{k_1}x, \cdots, y^{k_{n-1}}x,z v_1',\cdots,v_m'); \end{tikzcd} $
		 \item[(vii)] $\begin{tikzcd}[baseline = 15.5pt]
		           (x,y^{k_1}x, \cdots, y^{k_{n-1}}x,z, y,w_1',\cdots,w_m')\ar[rd, "(-1)^{n+2}\otimes  1"] & \\
&  \hspace{-1.4cm}(x,y^{k_1}x, \cdots, y^{k_{n-1}}x,z,y w_1',\cdots,w_m'),
		               \end{tikzcd}$
	\end{itemize}
	for $m,n\geqslant 1, k_1,\cdots, k_n \geqslant 0,$ and $xu_1, yv_1, zw_1, y^{k_n}x u_1', z v_1', y w_1' \in \mathcal{B}_+.$
Then $\mathcal{M}$ satisfies the condition in Proposition~\ref{Prop: sufficient condition for Morse matching} and is therefore a Morse matching for which the Morse complex $B(A,A)^{\mathcal{M}}$ is the two-sided Anick resolution of $A$.
\end{prop}

We now extend the Morse matching defined by Sk\"{o}ldberg by adding more arrows so that it yields the two-sided Koszul  resolution    of $A$.

\begin{thm}\label{prop: bigger morse matching for bar}
Define the new full subquiver $\widetilde{\M}$ of $Q_B$  as $\widetilde{\M} = \M \cup \M',$ where
$$\M' :=  \left\{   \begin{array}{l} (x,x,y^{\ell_1}x,\cdots , y^{\ell_n} x) \xrightarrow{-1\otimes  1}  (x,y^{\ell_1+1}x,\cdots , y^{\ell_n} x), \\
 (x,x,y^{\ell_1}x,\cdots , y^{\ell_n} x,z) \xrightarrow{-1\otimes  1}  (x,y^{\ell_1+1}x,\cdots , y^{\ell_n} x,z),\\
   (x,x,y^{\ell_1}x,\cdots , y^{\ell_n} x,z,y) \xrightarrow{-1\otimes  1}  (x,y^{\ell_1+1}x,\cdots , y^{\ell_n} x,z,y),  \end{array}     \right\}_{\substack{n\geqslant 1,\\ \ell_1,\cdots, \ell_n\geqslant 0.}} $$
 Then $\widetilde{\M}$  is a Morse matching, and the resulting Morse complex is the two-sided Koszul resolution of $A$.
	\end{thm}

\begin{proof}
Note that $Q^{\widetilde{\M}}$ is obtained from $Q^{\M}$ by replacing the arrows in $\M'$  with their reverse dotted arrows.  Let $p$ be a zigzag path in $Q^{\widetilde{\M}}.$
If  $p$ does not contain any dotted arrows corresponding to $\M'$, then $p$ is a path in $Q^{\mathcal{M}}$, and hence by Proposition~\ref{prop: Morse matching for Anick resolution}, it is of finite length.
Now, suppose that $p$ contains a dotted arrow induced by $\M'$. Without loss of generality, let the first such arrow encountered along the path $p$  be
$$ (x,y^{\ell_1+1}x,\cdots , y^{\ell_n} x)  \dashrightarrow  (x,x, y^{\ell_1}x,\cdots , y^{\ell_n} x);$$
the other two types of arrows can be discussed similarly.
A direct computation shows that, if it exists, the next dashed arrow induced by $\M'$ after this one is of the form
$$(x,y^{\ell_1'+1}x,y^{\ell_2}x, \cdots , y^{\ell_n} x)  \dashrightarrow  (x,x, y^{\ell_1'}x,y^{\ell_2}x,  \cdots , y^{\ell_n} x), $$
where $\ell_1'< \ell_1.$
Therefore, only finitely many dashed arrows in $p$ are induced by $\M'$, which split $p$ into finitely many paths in $Q^\M$.
Consequently,  by Proposition~\ref{prop: Morse matching for Anick resolution},  $p$ has finite length.
Hence $\widetilde{\M}$ is a Morse matching according to Proposition~\ref{Prop: sufficient condition for Morse matching}.

Finally, we obtain the critical set of $\widetilde{\M}$ as
$$\V_4^{\widetilde{\M}} = \{ (x,x,z,y)\}, \quad \V_3^{\widetilde{\M}} = \{ (x,x,z), (x,z,y) \}, \quad   \V_2^{\widetilde{\M}} = \{(x,x), (x,z), (z,y)\}, $$
$$ \V_1^{\widetilde{\M}} = \{x,y,z\}, \quad   \V_0^{\widetilde{\M}} = \{  \ast_{A\ot A}  \}, \quad  \V_{-1}^{\widetilde{\M}} = \{  \ast_{A}  \}.$$
Under  the identification,
\begin{equation}\label{eq: equi of Koszul and minimal res}
	\begin{array}{c}	(x,x,z,y)   \leftrightarrow   (x+y)xzy,  \quad
	(x,x,z)  \leftrightarrow   (x+y)xz,   \quad
	(x,z,y)  \leftrightarrow    xzy,\\
	(x,x) \leftrightarrow (x+y)x, \quad
	(x,z) \leftrightarrow xz, \quad
	(z,y) \leftrightarrow zy,
	\end{array}  \end{equation}
a direct computation shows that   the Morse complex of $\widetilde{\M}$   is precisely the two-sided Koszul resolution   of $A$.
\end{proof}

\begin{remark}
	 In \cite{CLZ24}, the authors obtained the Kosuzl resolution of $A$ by constructing a Morse matching $\M''$  on the quiver  associated to the Anick resolution.
Our approach combines the Morse matchings of Sköldberg and \cite{CLZ24}, thereby achieving computational efficiency and requiring only a single application of algebraic Morse theory.
\end{remark}

We now state the explicit formula for the comparison morphisms  between the two-sided Koszul and two-sided bar resolutions of $A$. 
The detailed computation, which involves enumerating all zigzag paths to or from critical vertices, is lengthy and has been moved to Appendix~\ref{Sect: proof} for brevity.

\begin{thm}\label{thm:Comparision from K to B}
	The comparison morphism
	\begin{equation*}
 \iota : K \longrightarrow B(A,A)
	\end{equation*}
of $A^{e}$-module morphisms, from the  two-sided Koszul resolution  $K$ of $A$ to the two-sided  bar resolution $B(A,A)$ of $A$ is the canonical inclusion map. 
	\end{thm}

	\begin{thm}\label{thm:Comparision from B to K}
		The comparison morphism
		\begin{equation*}
			\pi:  B(A,A) \longrightarrow K
		\end{equation*}
		of $A^{e}$-module morphisms, from  the two-sided  bar resolution $B(A,A)$ of $A$ to  the  two-sided Koszul resolution   $K$ of $A$  is defined as follows.
	\begin{itemize}[leftmargin=0.9cm]
		\item[(i)] In degrees $-1$ and $0,$  $ \pi$ is the identity map$;$
		
		\item[(ii)] in degree $1$, we have

	\noindent	$\pi_1(1\otimes  a_1\cdots a_n \otimes  1 ) = \sum_{i=1}^{n} a_{1,i-1}\otimes  a_i \otimes  a_{i+1,n},$ with $ a_1,\cdots, a_n \in \{x,y,z\}; $
		
		\item[(iii)] in degree $2$, we have

 \noindent	$\pi_2(1\otimes  ax \otimes  zb \otimes  1) = a \otimes  xz \otimes  b,$
		
 \noindent		$\pi_2(1\otimes  a'z \otimes  yb' \otimes  1) = a' \otimes  zy  \otimes  b',$
		
 \noindent	$\pi_2(1\otimes  y^{k_0}xy^{j_1} \otimes  y^{k_1-j_1}xb'' \otimes  1) =y^{k_0}(x+y)^{k_1} \otimes (x+y)x \otimes  b''  $

 \noindent $\hspace{2cm}  - \sum_{i=0}^{k_1-1} y^{k_0}(x+y)^{i}  \otimes  (x+y)x \otimes  y^{k_1-i-1}xb'',$
		
 \noindent	$\pi_2( 1\otimes  a'zxy^{j_1} \otimes  y^{k_1-j_1}xb'' \otimes  1 )  =  -a'\otimes  zy  \otimes  y^{k_1}xb'' + a'zxy^{k_1-1} \otimes     (x+y)x \otimes  b'' $

 \noindent  $\hspace{2cm}  - \sum_{i=0}^{k_1-1} a'zxy^{i-1} \otimes   (x+y)x    \otimes  y^{k_1-i-1}xb'', $
		
\noindent		with $k_0\geqslant 0,0\leqslant j_1 \leqslant k_1,$ and $ a,a',b,b',b'' \in \B$ such that $ax,a'z, zb, yb',xb''\in \B;$
		
		\item[(iv)]  in degree $3$, we have
		
	\noindent	$\pi_3(1\otimes  axy^{j_1} \otimes  y^{k_1-j_1}x \otimes  zb \otimes  1) = a(x+y)^{k_1} \otimes  (x+y)xz \otimes  b,$
		
	\noindent	$\pi_3(1\otimes  ax \otimes  z \otimes  yb' \otimes  1) = a \otimes  xzy \otimes  b',$
		
	\noindent	$\pi_3(1\otimes  ax \otimes  zxy^{j_1} \otimes  y^{k_1-j_1}xb''\otimes  1) = -a\otimes  xzy \otimes  y^{k_1}xb'',$
		
	\noindent	with $0\leqslant j_1 \leqslant k_1,$ and $a,b,b',b'' \in \B $ such that $ax, zb,yb',xb'' \in \B;$
		
		\item[(v)] in degree $4$, we have
		
	\noindent	$\pi_4(1 \otimes  axy^{j_1} \otimes  y^{k_1-j_1}x \otimes  z \otimes  yb \otimes  1) = a(x+y)^{k_1}\otimes  (x+y)xzy \otimes  b,$
		
	\noindent	$\pi_4(1\otimes  axy^{j_1} \otimes  y^{k_1-j_1}x \otimes  zxy^{j_2} \otimes  y^{k_2-j_2}xb' \otimes  1)  = - a(x+y)^{k_1} \otimes  (x+y)xzy \otimes  y^{k_2}xb',$
		
	\noindent	with $0\leqslant j_1 \leqslant k_1, 0\leqslant j_2 \leqslant k_2,$and $a,b,b' \in \B$ such that $ax,yb,xb' \in \B;$

		\item[(vi)] on all remaining direct summands of $B(A,A)$, the morphism $\pi$ vanishes.
	\end{itemize}

\end{thm}

\subsection{Connes' Differential}\label{Sect: Connes' differential}

Based on the computations presented in Section \ref{Sect: Hochschild homology}, $\HH_{n}(A)=0$ for $n\geqslant 2,$ hence the only   non-trivial component of the Connes’ differential on Hochschild homology of $A$ is $B_0:\HH_0(A)\to \HH_1(A)$.

\begin{prop}
The Connes’ differential   $B_{\bullet}:\HH_{\bullet}(A)  \to  \HH_{\bullet + 1}(A) $  is determined by
   \begin{itemize}
   	\item[(i)] $B_0(\alpha_0(n)) = n \alpha_1(n), B_0(\beta_0(n)) = n \beta_1 (n),  B_0(\gamma_0(n)) = n \gamma_1(n),$
   	\item[(ii)] $B_0(\overline \epsilon_0 (n_{1,p};m_{1,p})) = \theta_1(n_{1,p};m_{1,p}),$
   	\item[(iii)] $B_0(\zeta_0) = 0,$
   \end{itemize}
    where $p, n,n_1,\cdots, n_p, m_1,\cdots, m_p  \geqslant  1$ are positive integers.
\end{prop}

\begin{proof}
According to the comparison morphisms between the Koszul resolution and the Bar resolution established in Section~\ref{Sect: Comparison morphisms}, the action of Connes' differential, denoted as $\overline{B_0}$, on the Koszul  $0$-chain   is given by:
$$ \overline{B_0}(a)  =  \pi_{\ast}(B_0(a))    =\sum_{i=1}^{n} a_{i+1}\cdots a_n a_1\cdots a_{i-1} \ot a_i, $$
where $a=a_1\cdots a_n\in \B, a_1,\cdots, a_n \in \{x,y,z\}.$
Then we have
\begin{align*}
	 \overline{B_0} (\alpha_0(n)) = & (-1)^{n-1} \overline{B_0} (y^{n-1}x) & \\
	  = & (-1)^{n-1} \big( y^{n-1} \ot x + (\sum_{i=1}^{n-1}y^{n-i-1}xy^{i-1}) \ot  y \big) & + \im (d_2) \\
	 \overset{(\ast)}{=}& (-1)^{n-2} n y^{n-2}x \ot x &  + \im (d_2) \\
	  = & n x^{n-1} \ot x = \alpha_1(n), &
\end{align*}
where the  equation $(\ast)$  holds   because the difference between the two sides equals
$d_2 \big( ( \sum_{j=1}^{n-2}y^{j-1}xy^{n-j-2}-y^{n -2} ) \ot (x+y)x \big) \in \im(d_2). $
The computation of the action of $B_0$ is on the remaining elements is straightforward.
\end{proof}

\subsection{Gerstenhaber Bracket} \label{Sect: Gerstenhaber bracket}
According to the comparison morphisms $\iota$ and $\pi$ computed in Section~\ref{Sect: Comparison morphisms}, the action of the Gerstenhaber bracket on Koszul cochains $f$ and $g$ is given by:
\begin{equation}\label{eq: Gbracket on K} [f,g] :  = \iota^{*} [\pi^{*}(f), \pi^{*}(g)], \end{equation}
where $ \iota^{*}  $ and $\pi^{*}$ are the pre-composition maps  induced by $\iota$ and $\pi,$ respectively.

Due to the graded-anticommutativity of the Gerstenhaber bracket, it is determined by its restriction to $\HH^{-n}(A) \otimes \HH^{-m}(A)$ for $n \geqslant m$.
Furthermore, $\HH^{-n}(A)$ is nonzero only for $n = 0, 1, 2, 4$, and $\HH^{0}(A) = k$.
Hence, the Gerstenhaber bracket is non-trivial only on
\begin{itemize}
    \item[(i)] $\HH^{-1}(A) \otimes \HH^{-1}(A) \to \HH^{-1}(A);$
    \item[(ii)] $\HH^{-2}(A) \otimes \HH^{-1}(A) \to \HH^{-2}(A);$
    \item[(iii)] $\HH^{-4}(A) \otimes \HH^{-1}(A) \to \HH^{-4}(A).$
\end{itemize}

For better readability, the explicit formula for the Gerstenhaber bracket on the Hochschild cohomology of $A$  has been deferred to Appendix~\ref{Sect: Result GBracket} because of its complexity.
Instead, an illustrative example will be provided at the end of this section to demonstrate the calculation process.

For $B^{-1}(n) = \left( \begin{array}{rcl}  x  & \mapsto &  y^{n} x \\  y &  \mapsto & y^{n+1}    \end{array} \right),$ $B^{-1}(m) = \left( \begin{array}{rcl}  x  & \mapsto &  y^{m} x \\  y &  \mapsto & y^{m+1}   \end{array} \right),$ we have

\noindent $[B^{-1}(n), B^{-1}(m)] = (m-n)B^{-1}(m+n)$ according to Example~\ref{Ex: [B,B]}.
Here $B^{-1}(n)$ behaves like the one-dimensional vector field 
$$ x^{n+1}\frac{\partial}{\partial x} = : X_n $$
with $ [X_n, X_m] = (m-n) X_{m+n}.  $
Furthermore,   $(\HH^{\bullet}(A), [-,-])$ contains the following subalgebras by virtue of Proposition~\ref{prop: gerstenhaber on -1 x -1 to -1} and \ref{prop: gerstenhaber on -4 -1 to -1}.
\begin{cor}\label{cor: sub Witt of HH}
Let $L_0 =  A^{-1}, L_{n} = B^{-1}(n),$  $L_{n}' = H^{-1}(;n),$ and $I_n =A^{-4}(n)$, for $n\geqslant 1.$
Then, under the Gerstenhaber bracket,
\begin{itemize}
\item[(i)] $ \{L_n\}_{n\geqslant 0}  $ is the non-negative part  of a Witt algebra;
\item[(ii)] $\{L_n'\}_{n\geqslant 1} $ is the positive part of a Witt algebra;
\item[(iii)] $\{ L_n, I_n\}_{n\geqslant 1}$ is the positive part of  semidirect products of   Witt algebra and   tensor density module $W(0,0).$
\end{itemize}
\end{cor}

Finally, as an illustration, the bracket $[B^{-1}(n), B^{-1}(m)]$ is calculated as follows.

\begin{exam}\label{Ex: [B,B]}
	 Let $f= B^{-1}(n)$ and  $g = B^{-1}(m)$ be the basis elements of $\HH^{-1}(A)$  given in Proposition~\ref{prop: HH^1}.
According to equation~\eqref{eq: Gbracket on K}, we have
$$\begin{array}{rl}
  {[f,g]}(x)  = &   \sum\limits_{i=0}^{m-1} y^{i}f (y)y^{m-i-1}x + y^{m}f(x) -\sum\limits_{i=0}^{n-1} y^{i}g(y)y^{n-i-1}x - y^n g(x)\\
   =  &   (m-n)y^{m+n}x, \\
  {[f,g]}(y)  = & \sum\limits_{i=0}^{m}y^{i} f(y) y^{m-i} -  \sum\limits_{i=0}^{n} y^{i}g (y)y^{n-i}\\
     = & (m-n)y^{m+n+1}, \\
  {[f,g]}(z)  =   &   0.
  \end{array}$$
Therefore,  we have
 $$[B^{-1}(n), B^{-1}(m)] =(m-n)B^{-1}(m+n). $$
\end{exam}

\appendix

\section{Results of Cap Product and Gerstenhaber Bracket}\label{Sect: Result GBracket}
In this appendix, $\delta_{m,n}$ denotes the Kronecker delta.
For notational convenience in the subsequent  formulas, we extend the basis indices of $\HH_{\bullet}(A)$ and $\HH^{\bullet}(A)$ (given in Section~\ref{Sect: Hochschild co + homo}) to $\mathbb{Z}$, setting all undefined terms to zero.

\begin{prop} The only nontrivial component of the cap product
	$$\cap : \HH_1(A) \otimes \HH^{-1}(A) \to \HH_{0}(A) $$    is given by the following  formulas.
	
	\begin{small}
\begin{itemize}[leftmargin = 2em]
\item  $ \alpha_1(n) \cap A^{-1} = - \alpha_0(n),$
\item[]  $ \alpha_1(n) \cap B^{-1}(m)= (-1)^{m+1} \alpha_0(m+n),$
\item[] $   \alpha_1(n) \cap C^{-1}(m)= (-1)^{m+1} \alpha_0(m+n), $
\item[] $ \alpha_1(n) \cap D^{-1}(m,j) = (-1)^{m+1} \alpha_0(m+n),$
 \end{itemize}

\begin{itemize}[leftmargin = 2em]
\item $\beta_1(n)\cap A^{-1}   = -\beta_0 (n),$
\item[] $\beta_1(n)\cap B^{-1}(m)  = - \beta_0 (m+n),$
\item[] $\beta_1(n)\cap C^{-1}(m)  = (-1)^{m+n+1}\alpha_0 (m+n),$
\item[] $  \beta_1(n)\cap D^{-1}(m,j) = (-1)^{m+n+1}\alpha_0 (m+n),$
\item[]  $\beta_1(n) \cap F^{-1}(r_{0,q+1}; s_{1,q}) =  - \overline \epsilon_0 (r_1+r_{q+1} + n -2, r_{2,q}; s_{1,q}),$
\end{itemize}

\begin{itemize}[leftmargin = 2em]
\item $\gamma_1(n) \cap C^{-1}(m) =  -\overline \epsilon_0 (m-1;n),$
\item[] $\gamma_1(n) \cap  G^{-1}(r_{1,q+1}; s_{1,q}) = \overline \epsilon_0 (r_{1,q}, r_{q+1}-1; s_{1,q},n),  $
\item[] $\gamma_1(n) \cap   H^{-1}(r_{2,q}; s_{1,q}) =  - \overline \epsilon_0(r_{2,q};s_{2,q-1},  s_1+s_q + n -1),$
\end{itemize}

\begin{itemize}[leftmargin = 2em]
\item $\theta_1(n_{1,p}; m_{1,p}) \cap A^{-1}=  -(\sum_{i=1}^{p} n_i +p) \overline \epsilon_0(n_{1,p}; m_{1,p}), $
\item[] $\theta_1(n_{1,p}; m_{1,p}) \cap B^{-1}(m) = -   \sum_{i=1}^{p} n_i   \overline \epsilon_0(n_{1,i-1}, n_i+m, n_{i+1,p}; m_{1,p}),$
\item[]  $\theta_1(n_{1,p}; m_{1,p}) \cap  C^{-1}(m) $
			
		\noindent  $ = - \sum_{i=1}^{p}\sum_{j=1}^{m_i-1}  \overline \epsilon_0 (n_{1,i}, m-1,n_{i+1,p}; m_{1,i-1}, j, m_i - j, m_{i+1, p}), $

\item[] $\theta_1(n_{1,p}; m_{1,p}) \cap F^{-1}(r_{1,q+1}; s_{1,q}) $
			
	\noindent	  $= -\sum_{i=1}^{p} \sum_{j=1}^{n_i} \overline \epsilon_0 (n_{1,i-1}, r_1+j-1, r_{2,q}, r_{q+1}+n_i - j-1 , n_{i+1, p}; m_{1,i-1},s_{1,q}, m_{i,p}) ,$

\item[] $\theta_1(n_{1,p}; m_{1,p}) \cap	G^{-1}(r_{1,q+1}; s_{1,q})$
			
	\noindent	  $ = \sum_{i=1}^{p} \sum_{j=1}^{m_i-1} \overline \epsilon_0 (n_{1,i},r_{1,q}, r_{q+1}-1,n_{i+1,p}; m_{1,i-1},j, s_{1,q}, m_i - j, m_{i+1,p}),	  $

\item[] $\theta_1(n_{1,p}; m_{1,p}) \cap H^{-1}(r_{2,q}; s_{1,q}) $
			
	\noindent	  $ = - \sum_{i=1}^{p} \sum_{j=1}^{m_i} \overline \epsilon_0 (n_{1,i}, r_{2,q}, n_{i+1,p}; m_{1,i-1}, s_1+j-1, s_{2,q-1}, s_{q} + m_i - j, m_{i+1,p}),$

\end{itemize}

\begin{itemize}[leftmargin = 2em]
\item  the cap product is zero on the remaining basis elements.		
	\end{itemize}

	\end{small}
\end{prop}

\begin{prop}\label{prop: gerstenhaber on -1 x -1 to -1}
The Gerstenhaber bracket on  the component
$$[-,-]: \HH^{-1}(A) \otimes \HH^{-1}(A)\to \HH^{-1}(A)$$
 is  determined by the following relations. Owing to antisymmetry $[X, Y] = -[Y, X]$, we list only the brackets $[X, Y]$ for basis elements $X$ preceding $Y$ in the ordering given in Proposition~\ref{prop: HH^1}.
\end{prop}
{ \small
\begin{itemize}[leftmargin = 2 em]
\item $[A^{-1}, A^{-1}] = 0,$
\item[ ] $[A^{-1}, B^{-1}(m)] = m B^{-1}(m),$
\item[ ] $[A^{-1}, C^{-1}(m)] = mC^{-1}(m),$
\item[ ] $[A^{-1}, D^{-1}(m,j)] = m D^{-1}(m,j),$
\item[ ]  $[A^{-1}, E^{-1}(r_{0, q+1}; s_{1, q})] =  (\sum_{i=0}^{q+1} r_i + q - 1 ) E^{-1}(r_{0, q+1}; s_{1, q}),$
\item[ ] $[A^{-1}, F^{-1}(r_{1, q+1}; s_{1, q})] =  (\sum_{i=1}^{q+1} r_i + q - 2 ) F^{-1}(r_{1, q+1}; s_{1, q}), $
\item[ ] $[A^{-1}, G^{-1}(r_{1, q+1}; s_{1, q})] = (\sum_{i=1}^{q+1} r_i+ q ) G^{-1}(r_{1, q+1}; s_{1, q}),$
\item[ ] $[A^{-1}, H^{-1}(r_{2, q}; s_{1, q})] = (\sum_{i=2}^{q}r_i+q-1)H^{-1}(r_{2, q}; s_{1, q}),$

\item  $  [B^{-1}(n), B^{-1}(m)] = (m-n) B^{-1}(m+n),$
\item[ ] $  [B^{-1}(n), C^{-1}(m)]=  (m-1)C^{-1}(m+n) - \sum_{i=1}^{n-1}D^{-1}(m+n,i), $

\item[ ]  $  [B^{-1}(n), D^{-1}(m,j)] =  j D^{-1}(m+n,n+j) + (m-j-1)D^{-1}(m+n,j) $ 

$- \sum_{s=j+1}^{n+j-1} D^{-1}(m+n,s), $
 		
 \item[ ]  	$  [B^{-1}(n),  E^{-1}(r_{0,q+1};s_{1,q})] = \sum_{i=0}^{q+1} r_i E^{-1}(r_{0,i-1},r_i+n, r_{i+1,q+1}; s_{1,q}) $

 $- \sum_{i=0}^{n-1} E^{-1}(r_0+i, r_{1,q}, r_{q+1} +n - i; s_{1,q}) - E^{-1}(r_{0,q}, r_{q+1}+n; s_{1,q}),   $
 \item[ ] $  [B^{-1}(n),  F^{-1}(r_{1,q+1}; s_{1,q}) ] =  \sum_{i=1}^{q+1} r_i F^{-1}(r_{1,i-1}, r_{i}+n, r_{i+1,q+1}; s_{1,q}) $

 $-\sum_{i=0}^{n} F^{-1}(r_1+i, r_{2,q},r_{q+1}+n-i; s_{1,q}) -F^{-1}(r_{1,q}, r_{q+1}+n; s_{1,q}),  $
 \item[ ] $  [B^{-1}(n),  G^{-1}(r_{1,q+1}; s_{1,q}) ] = \sum_{i=1}^{q+1} r_i G^{-1}(r_{1,i-1}, r_{i}+n, r_{i+1,q+1}; s_{1,q})$

  $- G^{-1}(r_{1,q}, r_{q+1}+n; s_{1,q}) -\sum_{i=1}^{n-1} E^{-1}(i,r_{1,q}, r_{q+1}+n-i+1; s_{1,q}),   $
  \item[ ]  $  [B^{-1}(n),  H^{-1}(r_{2,q}; s_{1,q}) ] =\sum_{i=2}^{q} r_i H^{-1}(r_{2,i-1},r_i + n, r_{i+1,q}; s_{1,q}),  $

  \item $  [C^{-1}(n), C^{-1}(m)] = \sum_{i=1}^{m+1} D^{-1}(m+n,i) - \sum_{i=1}^{n-1}D^{-1}(m+n,i), $
  \item[ ] $ [C^{-1}(n), D^{-1}(m,j)] =   j D^{-1}(m+n, n+j) + \sum_{s=j+1}^{m-1} D^{-1}(m+n, s)$ 
  
  $ - \sum_{s=j+1}^{n+j-1} D^{-1}(m+n,s), $
  
  \item[ ] $      [C^{-1}(n),  E^{-1}(r_{0,q+1};s_{1,q})]  = r_0 E^{-1}(r_0+n,r_{1,q+1};s_{1,q})$

  $+   \sum_{i=1}^{r_1} E^{-1}(r_0+i, n+r_1-i, r_{2,q+1}; s_{1,q})$

  $ -\sum_{i=1}^{n-1} E^{-1}(r_0+i, r_{1,q}, r_{q+1}+n-i; s_{1,q}) $

  $+ \sum_{i=1}^{q} \sum_{j=1}^{s_i-1} E^{-1}(r_{0,i}, n-1, r_{i+1,q+1}; s_{1,i-1}, j, s_i - j , s_{i+1,q}),  $

  \item[ ] $      [C^{-1}(n),  F^{-1}(r_{1,q+1};s_{1,q})] $

  $=    \sum_{i=1}^{r_1-1} E^{-1}(i, n+r_1-i-1, r_{2,q+1}; s_{1,q}) $

   $+ \sum_{i=0}^{n-2} G^{-1}(r_1+i, r_{2,q}, r_{q+1}+n-i-2; s_{1,q})  $

   $ +\sum_{i=1}^{q} \sum_{j=1}^{s_i-1} F^{-1}(r_{1,i},n-1, r_{i+1,q+1}; s_{1,i-1}, j, s_i-j, s_{i+1,q}), $

  \item[ ] $      [C^{-1}(n),  G^{-1}(r_{1,q+1};s_{1,q})]  $

  $  =   -\sum_{i=1}^{r_1} E^{-1}(i, n+r_1-i, r_{2,q}, r_{q+1}+1; s_{1,q})  $

  $- \sum_{i=1}^{n-1}E^{-1}(i, r_{1,q}, r_{q+1}+n-i+1; s_{1,q})   $

   $ + \sum_{i=1}^{q} \sum_{j=1}^{s_i-1} G^{-1}(r_{1,i},n-1, r_{i+1,q+1}; s_{1,i-1}, j, s_i -j, s_{i+1,q}),  $

\item[ ] $      [C^{-1}(n),  H^{-1}(r_{2,q};s_{1,q})] $

$ =  \sum_{i=1}^{q} \sum_{j=1}^{s_i-1} H^{-1}(r_{2,i}, n-1, r_{i+1,q}; s_{1,i-1}, j, s_i-j, s_{i+1,q}),  $

\item $ [D^{-1}(n,i), D^{-1}(m,j)] =j D^{-1}(m+n,n+j)- i D^{-1}(m+n, m+i) $

$ + \sum_{s=i+j+1}^{m+i-1} D^{-1}(m+n,s)- \sum_{s=i+j+1}^{n+j-1} D^{-1}(m+n,s)  ,$

\item[ ]  $ [D^{-1}(n,i),  E^{-1}(r_{0,q+1};s_{1,q})]   =r_0 E^{-1}(r_0+n,r_{1,q+1};s_{1,q})$

$ + \sum_{j=1}^{r_1} E^{-1}(r_0+i+j, n+r_1-i-j, r_{2,q+1}; s_{1,q}) $

$  - \sum_{j=i+1}^{n-1} E^{-1}(r_0+j,r_{1,q},r_{q+1}+n-j; s_{1,q}),      $

\item[ ]  $ [D^{-1}(n,i),  F^{-1}(r_{1,q+1};s_{1,q})] $

$  =  - \sum_{j=i}^{r_1+i-1} E^{-1}(j, r_1+n - j -1, r_{2,q+1}; s_{1,q})$

$  + \sum_{j=1}^{n-i} E^{-1}(i, r_1+j-1, r_{2,q}, r_{q+1} +n - i-j; s_{1,q}), $

  \item[ ] $      [D^{-1}(n,i),  G^{-1}(r_{1,q+1};s_{1,q})] $

  $=  \sum_{j=1}^{r_1} E^{-1}(i+j, n + r_1-i-j, r_{2,q}, r_{q+1}+1; s_{1,q})   $

   $- \sum_{j=i+1}^{n-1}  E^{-1}(j, r_{1,q}, r_{q+1}+ n-j+1; s_{1,q}),      $
\item[ ] $      [D^{-1}(n,i),  H^{-1}(r_{2,q};s_{1,q})] =0,$

\item  $  [E^{-1}(n_{0, p+1}; m_{1, p}), E^{-1}(r_{0, q+1};s_{1, q})] $

$ =  -  \sum_{i=1}^{r_1} E^{-1}(r_0 + n_0 +i, n_{1,p}, n_{p+1}+r_1-i-1, r_{2,q+1}; m_{1,p}, s_{1,q}) $

$+ \sum_{i=1}^{n_1} E^{-1}(n_0 + r_0+i, r_{1,q}, r_{q+1}+n_1-i-1, n_{2,p+1}; s_{1,q}, m_{1,p}),$
\item[ ]$ [E^{-1}(n_{0, p+1}; m_{1, p}),  F^{-1}(r_{1,q+1};s_{1,q})]$

$  =  \sum_{i=0}^{n_1-1} E^{-1}(n_0+i, n_{1,p}, n_{p+1}+r_1-i-2, r_{2,q+1}; m_{1,p}, s_{1,q})  $

$ -\sum_{i=0}^{n_{p+1}-2} E^{-1}(n_{0,p}, r_1+i, r_{2,q},,r_{q+1}+n_{p+1}-i-2; m_{1,p}, s_{1,q}),  $

$ -\sum_{i=1}^{p} \sum_{j=0}^{n_i-1} E^{-1}(n_{0,i-1}, r_1+j,r_{2,q}, r_{q+1}+n_i -j-2, n_{i+1,p+1}; m_{1,i-1}, s_{1,q}, m_{i,p}),  $

\item[ ] $      [E^{-1}(n_{0, p+1}; m_{1, p}),  G^{-1}(r_{1,q+1};s_{1,q})]    $

$ =  - \sum_{i=1}^{r_1} E^{-1}(n_0+i, n_{1,p}, n_{p+1}+r_1-i-1, r_{2,q}, r_{q+1}+1; m_{1,p}, s_{1,q})$

$ +\sum_{i=1}^{n_1} E^{-1}(n_0+i, r_{1,q}, r_{q+1}+n_1-i, n_{2,p+1}; s_{1,q}, m_{1,p})    $

$ +\sum_{i=1}^{p} \sum_{j=1}^{m_i-1} E^{-1}(n_{0,i}, r_{1,q}, r_{q+1}-1, n_{i+1,p+1}; m_{1,i-1},j, s_{1,q},m_i-j, m_{i+1,p}),   $

\item[ ] $      [E^{-1}(n_{0, p+1}; m_{1, p}), H^{-1}(;s_{1})] $

$ = - \sum_{i=1}^{p} m_i E^{-1}(n_{0,p+1}; m_{1,i-1}, s_1+m_i-1, m_{i+1,p}),  $

\item[ ] $[E^{-1}(n_{0, p+1}; m_{1, p}), H^{-1}(r_{2,q};s_{1,q})]_{\substack{q \geqslant 2}}  $

$ =-\sum_{i=1}^{p} \sum_{j=0}^{m_i-1} E^{-1}(n_{0,i}, r_{2,q},n_{i+1,p+1}; m_{1,i-1}, s_1+j, s_{2,q-1}, s_{q}+m_i-j-1, m_{i+1,p}),   $

\item   $ [F^{-1}(n_{1, p+1}; m_{1, p}),  F^{-1}(r_{1,q+1};s_{1,q})]$

$ = \sum_{i=1}^{q}\sum_{j=0}^{r_i-1} F^{-1}(r_{1,i-1}, n_1+j, n_{2,p}, n_{p+1}+r_i-j-2, r_{i+1,q+1}; s_{1,i-1}, m_{1,p}, s_{i,q})   $

$ + \sum_{i=0}^{r_{q+1}-2} F^{-1}(r_{1,q}, n_{1}+i, n_{2,p}, n_{p+1}+r_{q+1}-i-2; s_{1,q}, m_{1,p})   $

$ -\sum_{i=0}^{n_{p+1}-2} F^{-1}(n_{1,p}, r_1+j, r_{2,q}, r_{q+1}+n_{p+1}-j-2; m_{1,p}, s_{1,q}),    $

$ -\sum_{i=1}^{p} \sum_{j=0}^{n_i-1} F^{-1}(n_{1,i-1}, r_{1}+j, r_{2,q}, r_{q+1}+n_i-j-2, n_{i+1,p+1}; m_{1,i-1}, s_{1,q}, m_{i,p})   $

\item[ ] $ [F^{-1}(n_{1, p+1}; m_{1, p}),  G^{-1}(r_{1,q+1};s_{1,q})]$

$ = \sum_{i=1}^{q} \sum_{j=0}^{r_i-1} G^{-1}(r_{1,i-1}, n_1+j, n_{2,p}, n_{p+1}+r_i-j-2, r_{i+1,q+1}; s_{1,i-1}, m_{1,p}, s_{i,q})   $

$ +\sum_{i=0}^{r_{q+1}-2} G^{-1}(r_{1,q},n_1+i,n_{2,p}, n_{p+1}+r_{q+1}-i-2; s_{1,q}, m_{1,p})    $

$ -\sum_{i=1}^{n_1-1} E^{-1}(i, r_{1,q},r_{q+1}+n_1-i-1,n_{2,p+1}; s_{1,q}, m_{1,p})    $

$ + \sum_{i=1}^{p} \sum_{j=1}^{m_{i}-1} F^{-1} (n_{1,i}, r_{1,q}, r_{q+1}-1, n_{i+1,p+1}; m_{1,i-1},j, s_{1,q},m_i-j, m_{i+1,p})  $

\item[ ] $    [F^{-1}(n_{1, p+1}; m_{1, p}), H^{-1}(;s_{1})] $

$= - \sum_{i=1}^{p}m_i F^{-1}(n_{1,p+1}; m_{1,i-1}, s_{1}+m_i-1, m_{i+1,p}),    $

\item[ ] $    [F^{-1}(n_{1, p+1}; m_{1, p}), H^{-1}(r_{2,q};s_{1,q})]_{\substack{q \geqslant 2}}  $

$= - \sum_{i=1}^{p} \sum_{j=0}^{m_i-1} F^{-1}(n_{1,i}, r_{2,q}, n_{i+1,p+1}; m_{1,i-1}, s_{1}+j, s_{2,q-1}, s_{q}+m_i-j-1, m_{i+1,p})    $

$ + \sum_{i=2}^{q} \sum_{j=0}^{r_i-1} H^{-1}(r_{2,i-1}, n_1+j, n_{2,p}, n_{p+1}+r_i-j-2, r_{i+1,q}; s_{1,i-1}, m_{1,p}, s_{i,q}),   $

\item   $ [G^{-1}(n_{1, p+1}; m_{1, p}),  G^{-1}(r_{1,q+1};s_{1,q})]$

$ = - \sum_{i=1}^{r_1} E^{-1}(i,n_{1,p}, n_{p+1}+r_1-i, r_{2,q}, r_{q+1}+1; m_{1,p}, s_{1,q})   $

$ + \sum_{i=1}^{n_1} E^{-1}(i, r_{1,q}, r_{q+1}+n_1-i, n_{2,p}, n_{p+1}+1; s_{1,q}, m_{1,p})    $

$ -\sum_{i=1}^{q}\sum_{j=1}^{s_i-1}  G^{-1}(r_{1,i}, n_{1,p}, n_{p+1}-1, r_{i+1,q+1}; s_{1,i-1}, j, m_{1,p}, s_{i}-j, s_{i+1,q})  $

$ +\sum_{i=1}^{p} \sum_{j=1}^{m_i-1} G^{-1}(n_{1,i},r_{1,q}, r_{q+1}-1, n_{i+1,p+1}; m_{1,i-1}, j, s_{1,q}, m_i-j, m_{i+1,p}),    $

\item[ ] $    [G^{-1}(n_{1, p+1}; m_{1, p}), H^{-1}(;s_{1})]  $

$ = - \sum_{i=1}^{p} m_i G^{-1}(n_{1,p+1}; m_{1,i-1}, s_1+m_i-1, m_{i+1,p}) $

$ -\sum_{j=1}^{s_1-1} H^{-1}(n_{1,p}, n_{p+1}-1;  j, m_{1,p}, s_1-j),   $

\item[ ] $    [G^{-1}(n_{1, p+1}; m_{1, p}), H^{-1}(r_{2,q};s_{1,q})]_{\substack{q \geqslant 2}}   $

$ = - \sum_{i=1}^{p} \sum_{j=0}^{m_i-1} G^{-1}(n_{1,i}, r_{2,q}, n_{i+1,p+1}; m_{1,i-1}, s_1+j, s_{2,q-1}, s_q+m_i-j-1, m_{i+1,p}) $

$ -\sum_{i=1}^{q} \sum_{j=1}^{s_i-1} H^{-1}(r_{2,i}, n_{1,p}, n_{p+1}-1, r_{i+1,q}; s_{1,i-1}, j, m_{1,p}, s_i-j, s_{i+1,q}),   $

\item $[H^{-1}(;n), H^{-1}(; m)] = (m-n) H^{-1}(; m+n),$

\item[ ]   $[H^{-1}(n_{2, p}; m_{1, p}), H^{-1}(;s_{1})]_{\substack{p \geqslant 2}}   $

$ =\sum_{j=0}^{s_1-1} H^{-1}(n_{2,p};  m_{1}+j, m_{2,p-1}, m_{p}+ s_1-j-1)    $

$ -\sum_{i=1}^{p}m_i H^{-1}(n_{2,,p}; m_{1,i-1}, s_1+m_i-1, m_{i+1,p})   $

\item[ ]   $[H^{-1}(n_{2, p}; m_{1, p}), H^{-1}(r_{2,q};s_{1,q})]_{\substack{p,q \geqslant 2}}   $

$ =\sum_{i=1}^{q}\sum_{j=0}^{s_i-1} H^{-1}(r_{2,i}, n_{2,p}, r_{i+1,q}; s_{1,i-1}, m_{1}+j, m_{2,p-1}, m_{p}+ s_i-j-1, s_{i+1,q})    $

$ -\sum_{i=1}^{p}\sum_{j=0}^{m_i-1} H^{-1}(n_{2,i}, r_{2,q}, n_{i+1,p}; m_{1,i-1}, s_1+j, s_{2,q-1}, s_q+m_i-j-1, m_{i+1,p}).   $

\end{itemize}

}

\begin{prop}\label{prop: gerstenhaber on -2 -1 to -1}
The Gerstenhaber bracket on  the component
$$[-,-]: \HH^{-2}(A) \otimes \HH^{-1}(A)\to \HH^{-2}(A)$$
 is  determined by the following relations.
\end{prop}

 {\small
 \begin{itemize}[leftmargin = 2em]
 \item All brackets $[A^{-2}, f]$ with $f$ a basis element of $\HH^{-1}(A)$ are zero, except $[A^{-2}, A^{-1}] = A^{-2}.$
 \item The brackets  $[B^{-2}(n_{2, p};m_{1, p}), x]$ with $x \in \HH^{-1}(A)$ is given by:
 \item[ ] $ [B^{-2}(n_{2, p};m_{1, p}), A^{-1}] = - (\sum_{i=2}^{p}n_i + p -2) B^{-2}(n_{2, p};m_{1, p}), $
 \item[ ] $ [B^{-2}(n_{2, p};m_{1, p}), B^{-1}(m)] =  -\sum_{i=2}^{p}n_i B^{-2}(n_{2,i-1},n_i+m,n_{i+1,p}; m_{1,p}),  $
 \item[ ] $ [B^{-2}(n_{2, p};m_{1, p}), C^{-1}(m)]$

 $  = -\sum_{i=1}^{p}\sum_{j=1}^{m_i-1} B^{-2}(n_{2,i},m-1,n_{i+1,p}; m_{1,i-1},j, m_i-j, m_{i+1,p}), $

 \item[ ] $ [B^{-2}(n_{2, p};m_{1, p}), D^{-1}(m,j)] =0,$
 \item[ ] $ [B^{-2}(n_{2, p};m_{1, p}), E^{-1}(r_{0, q+1}; s_{1, q})] =0,$
 \item[ ]  $ [B^{-2}(n_{2, p};m_{1, p}), F^{-1}(r_{1, q+1}; s_{1, q})] $

 $ = - \sum_{i=2}^{p} \sum_{j=0}^{n_i-1} B^{-2}(n_{2,i-1}, r_1+j, r_{2,q}, r_{q+1}+n_i-j-2, n_{i+1,p}; m_{1,i-1}, s_{1,q}, m_{i,p}), $

 \item[ ]  $ [B^{-2}(n_{2, p};m_{1, p}), G^{-1}(r_{1, q+1}; s_{1, q})] $

 $ = \sum_{i=1}^{p} \sum_{j=1}^{m_i-1} B^{-2}(n_{2,i}, r_{1,q}, r_{q+1}-1, n_{i+1,p}; m_{1,i-1}, j, s_{1,q}, m_i-j, m_{i+1,p}),   $

 \item[ ] $ [B^{-2}(n_{2, p};m_{1, p}), H^{-1}(; s_{1})]  = -\sum_{i=1}^{p} m_i B^{-2}(n_{2,p}; m_{1,i-1},s_1+m_i-1,m_{i+1,p}),   $

 \item[ ] $ [B^{-2}(n_{2, p};m_{1, p}), H^{-1}(r_{2, q}; s_{1, q})]_{\substack{q \geqslant 2}}  $

 $ = -\sum_{i=1}^{p} \sum_{j=0}^{m_i-1} B^{-2}(n_{2,i}, r_{2,q}, n_{i+1,p}; m_{1,i-1},j+s_1, s_{2,q-1}, s_q+m_i-j-1,m_{i+1,p}).   $
 \end{itemize}

 }

\begin{prop}\label{prop: gerstenhaber on -4 -1 to -1}
The Gerstenhaber bracket on  the component
$$[-,-]: \HH^{-4}(A) \otimes \HH^{-1}(A)\to \HH^{-4}(A)$$
 is  determined by the following relations.
\end{prop}
{\small
\begin{itemize}[leftmargin=2em]
\item   $[A^{-4}, A^{-1}] = 3A^{-4},$
\item[ ] $[A^{-4}(n), A^{-1}] = -(n-2)A^{-4}(n),$
\item[ ] $ [B^{-4}(n_{0,p};m_{1,p}), A^{-1}] = - (\sum_{i=0}^{p}n_i + p -4) B^{-4}(n_{0,p}; m_{1,p}),  $
\item[ ] $ [C^{-4}(n_{0,p};m_{1,p}), A^{-1}] = - (\sum_{i=0}^{p}n_i + p -3) C^{-4}(n_{0,p}; m_{1,p}),  $
\item[ ] $ [D^{-4}(n_{2,p};m_{1,p}), A^{-1}] = - (\sum_{i=2}^{p}n_i + p -4) D^{-4}(n_{2,p}; m_{1,p}),   $
\item[ ] $ [E^{-4}(n_{2,p};m_{1,p}), A^{-1}] = - (\sum_{i=2}^{p}n_i + p -3) E^{-4}(n_{2,p}; m_{1,p}),   $

\item   $[A^{-4}, B^{-1}(m)] = 0,$
\item[ ] $[A^{-4}(n), B^{-1}(m)] = -nA^{-4}(m+n),$
\item[ ] $ [B^{-4}(n_{0,p};m_{1,p}), B^{-1}(m)] = -(n_0-1) B^{-4}(n_0+m, n_{1,p}; m_{1,p})$

$  - \sum_{i=1}^{p} n_i B^{-4}(n_{0,i-1}, n_i+m, n_{i+1,p}; m_{1,p}), $

\item[ ] $ [C^{-4}(n_{0,p};m_{1,p}), B^{-1}(m)] = -(n_0-1) C^{-4}(n_0+m, n_{1,p}; m_{1,p})$

$  - \sum_{i=1}^{p} n_i C^{-4}(n_{0,i-1}, n_i+m, n_{i+1,p}; m_{1,p}), $

\item[ ] $ [D^{-4}(n_{2,p};m_{1,p}), B^{-1}(m)] = -B^{-4}(1,m-1, n_{2,p}; m_{1,p})$

$ -\sum_{i=2}^{p}n_i D^{-4}(n_{2,i-1}, n_i+m, n_{i+1,p}; m_{1,p}),  $

\item[ ] $ [E^{-4}(n_{2,p};m_{1,p}), B^{-1}(m)] = -C^{-4}(1,m-1, n_{2,p}; m_{1,p})$

$ -\sum_{i=2}^{p}n_i E^{-4}(n_{2,i-1}, n_i+m, n_{i+1,p}; m_{1,p}),  $

\item   $[A^{-4}, C^{-1}(m)] =0, $

\item[ ]  $[A^{-4}(n), C^{-1}(m)] = -nB^{-4}(m+n), $

\item[ ] $ [B^{-4}(n_{0,p};m_{1,p}), C^{-1}(m)] = - (n_0-1)B^{-4}(n_0+m, n_{1,p}; m_{1,p}) $

$-\sum_{i=1}^{n_1} B^{-4}(n_0+i, n_1+m-i, n_{2,p}; m_{1,p}) $

$ - \sum_{i=1}^{p} \sum_{j=1}^{m_i-1} B^{-4}(n_{0,i}, m-1, n_{i+1,p}; m_{1,i-1}, j, m_i -j, m_{i+1,p}),  $

\item[ ] $ [C^{-4}(n_{0,p};m_{1,p}), C^{-1}(m)]  = -(n_0-1) C^{-4}(n_0+m,n_{1,p};m_{1,p})$

$ -\sum_{i=1}^{n_1} C^{-4}(n_0+i, n_1+m-i, n_{2,p}; m_{1,p}) $

$ - \sum_{i=1}^{p} \sum_{j=1}^{m_i-1} C^{-4}(n_{0,i}, m-1, n_{i+1,p}; m_{1,i-1}, j, m_i -j, m_{i+1,p}),  $

\item[ ] $ [D^{-4}(n_{2,p};m_{1,p}), C^{-1}(m)] =   -B^{-4}(1,m-1,n_{2,p}; m_{1,p})$

$  - \sum_{i=1}^{p}\sum_{j=1}^{m_i-1} D^{-4}(n_{2,i}, m-1, n_{i+1,p}; m_{1,i-1},j, m_i-j, m_{i+1,p}),  $

\item[ ] $ [E^{-4}(n_{2,p};m_{1,p}), C^{-1}(m)] =   -C^{-4}(1,m-1,n_{2,p}; m_{1,p})$

$  - \sum_{i=1}^{p}\sum_{j=1}^{m_i-1} E^{-4}(n_{2,i}, m-1, n_{i+1,p}; m_{1,i-1},j, m_i-j, m_{i+1,p}),  $

\item    $[A^{-4}, D^{-1}(m,i)] =0,$

\item[ ] $[A^{-4}(n), D^{-1}(m,i)] = -nA^{-4}(m+n),$

\item[ ] $ [B^{-4}(n_{0,p};m_{1,p}), D^{-1}(m,i)]  = -(n_0-1)B^{-4}(n_0+m,n_{1,p}; m_{1,p}) $

$ - \sum_{j=1}^{n_1} B^{-4}(n_0+i+j, n_1+m-i-j, n_{2,p}; m_{1,p}), $

\item[ ] $ [C^{-4}(n_{0,p};m_{1,p}), D^{-1}(m,i)]  =  -(n_0-1) C^{-4}(n_0+m,n_{1,p}; m_{1,p}) $

$ - \sum_{j=1}^{n_1} C^{-4}(n_0+i+j, n_1+m-i-j, n_{2,p}; m_{1,p}), $

\item[ ] $ [D^{-4}(n_{2,p};m_{1,p}), D^{-1}(m,i)] =  -B^{-4}(i+1, m-i-1, n_{2,p}; m_{1,p}),  $

\item[ ] $ [E^{-4}(n_{2,p};m_{1,p}), D^{-1}(m,i)] =  -C^{-4}(i+1, m-i-1, n_{2,p}; m_{1,p}),  $

\item   $[A^{-4}, E^{-1}(r_{0, q},1; s_{1, q})]  =
  B^{-4}(r_0+1, r_{1,q}; s_{1,q}) + C^{-4}(r_{0,q}; s_{1,q}),  $

\item[ ]
  $[A^{-4}, E^{-1}(r_{0, q},2; s_{1, q})] =   C^{-4}(r_0+1, r_{1,q}; s_{1,q}),   $

\item[ ]
 $ [A^{-4}, E^{-1}(r_{0, q},r_{q+1}; s_{1, q})]_{\substack{r_{q+1} \geqslant 3}}  =  0,
 $

\item[ ] $[A^{-4}(n), E^{-1}(r_{0, q+1}; s_{1, q})] = - \delta_{r_{q+1},1} C^{-4}(r_0+n+1, r_{1,q}; s_{1,q}), $

\item[ ] $[B^{-4}(n_{0,p};m_{1,p}), E^{-1}(r_{0, q+1}; s_{1, q})] =\delta_{r_0,1} \delta_{r_{q+1},1} C^{-4}(n_{0,p}, r_{1,q}; m_{1,p}, s_{1,q})   $

$ + \delta_{r_{q+1}, 1} \delta_{n_0,1} B^{-4}(r_0+1, r_{1,q}, n_{1,p}; s_{1,q}, m_{1,p})  $

$ + \sum_{i=1}^{n_1} B^{-4}(n_0+r_0+i, r_{1,q}, r_{q+1} + n_1-i-1, n_{2,p}; s_{1,q}, m_{1,p}),    $

\item[ ]  $[C^{-4}(n_{0,p};m_{1,p}), E^{-1}(r_{0, q+1}; s_{1, q})] =  \delta_{r_{q+1},1} \delta_{n_0,1} C^{-4}(r_0+1,r_{1,q}, n_{1,p}; s_{1,q}, m_{1,p})    $

$+  \sum_{i=1}^{n_1} C^{-4}(n_0+r_0+i, r_{1,q}, r_{q+1}+n_1-i-1, n_{2,p}, s_{1,q}, m_{1,p}),   $

\item[ ] $[D^{-4}(n_{2,p};m_{1,p}), E^{-1}(r_{0, q}, 1 ; s_{1, q})] =  \delta_{r_0,1} E^{-4}(n_{2,p}, r_{1,q}; m_{1,p}, s_{1,q}) $

$ + B^{-4}(r_0+1, r_{1,q}, n_{2,p}; s_{1,q-1}, s_{q}+m_1, m_{2,p}),$

\item[ ]  $[D^{-4}(n_{2,p};m_{1,p}), E^{-1}(r_{0, q}, r_{q+1}; s_{1, q})]_{\substack{r_{q+1}\geqslant 2}} =   B^{-4}(r_0+1, r_{1,q}, r_{q+1}-2, n_{2,p}; s_{1,q},m_{1,p}),$

\item[ ] $[E^{-4}(n_{2,p};m_{1,p}), E^{-1}(r_{0, q}, 1 ; s_{1, q})] = C^{-4}(r_0+1, r_{1,q}, n_{2,p}; s_{1,q-1}, s_{q}+m_1, m_{2,p}) $

\item[ ]  $[E^{-4}(n_{2,p};m_{1,p}), E^{-1}(r_{0, q}, r_{q+1}; s_{1, q})]_{\substack{r_{q+1}\geqslant 2}}=   C^{-4}(r_0+1, r_{1,q}, r_{q+1}-2, n_{2,p}; s_{1,q},m_{1,p}),$

\item    $[A^{-4}, F^{-1}(1,r_{2, q}, 1 ;s_{1, q})] = E^{-4}(r_{2,q}; s_{1,q}),$

\item[ ] $[A^{-4}, F^{-1}(r_1,r_{2, q}, 1 ;s_{1, q})]_{\substack{r_{1}\geqslant 2}} =   -B^{-4}(1,r_1-1, r_{2,q}; s_{1,q}) + C^{-4}(1,r_1-2, r_{2,q}; s_{1,q}),$

\item[ ] $[A^{-4}, F^{-1}(r_1,r_{2, q}, 2 ;s_{1, q})]_{\substack{r_{1}\geqslant 2}} =   -C^{-4}(1,r_1-1, r_{2,q}; s_{1,q}), $

\item[ ]  $[A^{-4}, F^{-1}(r_{1, q+1};s_{1, q})] =0, $ for  $F^{-1}(r_{1, q+1};s_{1, q})$ outside the above three cases,

\item[ ]  $[A^{-4}(n), F^{-1}(r_{1, q+1};s_{1, q})] = \delta_{r_{q+1},1} C^{-4}(n+1, r_1-1, r_{2,q}; s_{1,q}), $

\item[ ] $[B^{-4}(n_{0,p};m_{1,p}), F^{-1}(r_{1, q+1}; s_{1, q})] $

$=  - \delta_{r_{q+1},1} \delta_{n_0,1} B^{-4}(1,r_1-1, r_{2,q}, n_{1,p}; s_{1,q}, m_{1,p})  $

$+ \delta_{r_1,1} \delta_{r_{q+1},1} C^{-4}(n_{0,p}, r_{2,q}; m_{1,p-1}, m_p+s_1, s_{2,q})   $

$ -\sum_{i=1}^{p} \sum_{j=0}^{n_i-1} B^{-4}(n_{0,i-1}, r_1+j, r_{2,q}, r_{q+1}+n_i-j-2, n_{i+1,p}; m_{1,i-1}, s_{1,q}, m_{i,p}),  $

\item[ ] $[C^{-4}(n_{0,p};m_{1,p}), F^{-1}(r_{1, q+1}; s_{1, q})] $

$=  - \delta_{r_{q+1},1} \delta_{n_0,1} C^{-4}(1,r_1-1, r_{2,q}, n_{1,p}; s_{1,q}, m_{1,p})  $

$ +\delta_{r_{q+1},1} C^{-4}(n_{0,p},r_1-1, r_{2,q}; m_{1,p}, s_{1,q})   $

$ -\sum_{i=1}^{p} \sum_{j=0}^{n_i-1} C^{-4}(n_{0,i-1}, r_1+j, r_{2,q}, r_{q+1}+n_i-j-2, n_{i+1,p}; m_{1,i-1}, s_{1,q}, m_{i,p}),  $

\item[ ] $[D^{-4}(n_{2,p};m_{1,p}), F^{-1}(r_{1, q+1}; s_{1, q})] $

$ =   -\delta_{r_{q+1},1} B^{-4}(1,r_1-1, r_{2,q}, n_{2,p}; s_{1,q-1}, s_q+m_1, m_{2,p}) $

$ -B^{-4}(1, r_1-1, r_{2,q}, r_{q+1}-2, n_{2,p}; s_{1,q}, m_{1,p}) $

$+ \delta_{r_1,1} \delta_{r_{q+1},1} E^{-4}(n_{2,p}, r_{2,q}; m_{1,p-1}, m_p+s_1, s_{2,q})   $

$ - \sum_{i=2}^{p} \sum_{j=0}^{n_i-1} D^{-4}(n_{2,i-1}, r_1+j, r_{2,q}, r_{q+1}+n_i-j-2, n_{i+1,p}; m_{1,i-1}, s_{1,q}, m_{i,p}),   $

\item[ ] $[E^{-4}(n_{2,p};m_{1,p}), F^{-1}(r_{1, q+1}; s_{1, q})] $

$= \delta_{r_{q+1},1} E^{-4}(n_{2,p}, r_1-1, r_{2,q}; m_{1,p}, s_{1,q})$

$ - C^{-4}(1,r_1-1, r_{2,q}, r_{q+1}-2, n_{2,p}; s_{1,q}, m_{1,p})   $

$    -\delta_{r_{q+1},1} C^{-4}(1,r_1-1, r_{2,q}, n_{2,p}; s_{1,q-1}, s_q+m_1, m_{2,p})  $

$ -\sum_{i=2}^{p}\sum_{j=0}^{n_i-1} E^{-4}(n_{2,i-1}, r_1+j, r_{2,q}, r_{q+1}+n_i-j-2, n_{i+1,p}; m_{1,i-1}, s_{1,q}, m_{i,p}),  $

\item   $[A^{-4}, G^{-1}(r_{1, q+1};s_{1, q})] = \delta_{r_{q+1},1} C^{-4}(1,r_{1,q}; s_{1,q}),$

\item[ ] $[A^{-4}(n), G^{-1}(r_{1, q+1};s_{1, q})] = 0, $

\item[ ] $[B^{-4}(n_{0,p};m_{1,p}), G^{-1}(r_{1, q+1};s_{1, q})] =   \sum_{i=1}^{n_1} B^{-4}(n_0+i, r_{1,q}, r_{q+1}+n_1-i, n_{2,p}; s_{1,q}, m_{1,p}) $

$ + \sum_{i=1}^{p}\sum_{j=1}^{m_i-1} B^{-4}(n_{0,i}, r_{1,q}, r_{q+1}-1, n_{i+1,p}; m_{1,i-1},j, s_{1,q},m_i-j, m_{i+1,p})   $

$ + \delta_{r_{q+1},1} C^{-4}(n_{0,p}, r_{1,q}; m_{1,p}, s_{1,q}),  $

\item[ ] $[C^{-4}(n_{0,p};m_{1,p}), G^{-1}(r_{1, q+1};s_{1, q})] =   \sum_{i=1}^{n_1} C^{-4}(n_0+i, r_{1,q}, r_{q+1}+n_1-i, n_{2,p}; s_{1,q}, m_{1,p})   $

$ +\sum_{i=1}^{p}\sum_{j=1}^{m_i-1} C^{-4}(n_{0,i}, r_{1,q}, r_{q+1}-1, n_{i+1,p}; m_{1,i-1},j, s_{1,q},m_i-j, m_{i+1,p}),   $

\item[ ] $[D^{-4}(n_{2,p};m_{1,p}), G^{-1}(r_{1, q+1};s_{1, q})] =   B^{-4}(1,r_{1,q}, r_{q+1}-1,n_{2,p}; s_{1,q}, m_{1,p})   $

$ + \sum_{i=1}^{p} \sum_{j=1}^{m_i-1} D^{-4}(n_{2,i}, r_{1,q}, r_{q+1}-1, n_{i+1,p}; m_{1,i-1}, j, s_{1,q}, m_i-j, m_{i+1,p})   $

$ +\delta_{r_{q+1},1} E^{-4}(n_{2,p}, r_{1,q}; m_{1,p}, s_{1,q}),  $

\item[ ] $[E^{-4}(n_{2,p};m_{1,p}), G^{-1}(r_{1, q+1};s_{1, q})] =  C^{-4}(1,r_{1,q}, r_{q+1}-1, n_{2,p}; s_{1,q},m_{1,p}) $

$ + \sum_{i=1}^{p}\sum_{j=1}^{m_i-1} E^{-4}(n_{2,i}, r_{1,q}, r_{q+1}-1, n_{i+1,p}; m_{1,i-1},j, s_{1,q},m_i-j, m_{i+1,p}),$

\item   $ [A^{-4}, H^{-1}( ;1)] = 0,$

\item[ ] $[A^{-4}(n), H^{-1}( ;1)] = A^{-4}(n),$

\item[ ] $[B^{-4}(n_{0,p};m_{1,p}), H^{-1}( ;1)] =  - (\sum_{i=1}^{p} m_i-1)B^{-4}(n_{0,p};m_{1,p}),$

\item[ ] $[C^{-4}(n_{0,p};m_{1,p}), H^{-1}( ;1)] =  - (\sum_{i=1}^{p} m_i-1)C^{-4}(n_{0,p};m_{1,p}),$

\item[ ] $[D^{-4}(n_{2,p};m_{1,p}), H^{-1}( ;1)] =  - (\sum_{i=1}^{p} m_i-1)D^{-4}(n_{2,p};m_{1,p}),$

\item[ ] $[E^{-4}(n_{2,p};m_{1,p}), H^{-1}( ;1)] =  - (\sum_{i=1}^{p} m_i-1)E^{-4}(n_{2,p};m_{1,p}),$

\item  the operators $[-, H^{-1}(r_{2, q};s_{1, q})]$  (excluding the case $[-, H^{-1}( ;1)]$) act on  $\HH^{-4}(A)$  in the following way:

\item[ ] $[A^{-4}, H^{-1}(r_{2, q};s_{1, q})] = 0,$

\item[ ] $[A^{-4}(n),  H^{-1}(r_{2, q};s_{1, q})] = 0,$

\item[ ] $[B^{-4}(n_{0,p};m_{1,p}), H^{-1}(r_{2, q};s_{1, q})] $

$= - \sum_{i=1}^{p} \sum_{j=0}^{m_i-1} B^{-4}(n_{0,i}, r_{2,q}, n_{i+1,p}; m_{1,i-1}, s_1+j, s_{2,q-1}, s_q +m_i -j-1, m_{i+1,p}),$

\item[ ] $[C^{-4}(n_{0,p};m_{1,p}), H^{-1}(r_{2, q};s_{1, q})] $

$= - \sum_{i=1}^{p} \sum_{j=0}^{m_i-1} C^{-4}(n_{0,i}, r_{2,q}, n_{i+1,p}; m_{1,i-1}, s_1+j, s_{2,q-1}, s_q +m_i -j-1, m_{i+1,p}),$

\item[ ] $[D^{-4}(n_{2,p};m_{1,p}), H^{-1}(r_{2, q};s_{1, q})] $

$= - \sum_{i=1}^{p} \sum_{j=0}^{m_i-1} D^{-4}(n_{2,i}, r_{2,q}, n_{i+1,p}; m_{1,i-1}, s_1+j, s_{2,q-1}, s_q +m_i -j-1, m_{i+1,p}),$

\item[ ] $[E^{-4}(n_{2,p};m_{1,p}), H^{-1}(r_{2, q};s_{1, q})] $

$= - \sum_{i=1}^{p} \sum_{j=0}^{m_i-1} E^{-4}(n_{2,i}, r_{2,q}, n_{i+1,p}; m_{1,i-1}, s_1+j, s_{2,q-1}, s_q +m_i -j-1, m_{i+1,p}),$

\end{itemize}

}

\section{Technical Details of Proofs} \label{Sect: proof}

In this section, we give an explicit computation of the basis elements for Hochschild (co)homology from Section~\ref{Sect: Hochschild co + homo}, together with an explicit computation of the comparison morphisms, given in Section~\ref{Sect: Comparison morphisms}, between the bar resolution and the Koszul resolution of $A$.

Throughout this appendix, we adopt the following convention for the elements of $A$ according to  Corollary~\ref{cor:k-basis_A}:
\begin{equation}\label{eq: expression of a}
 \begin{array}{rl}
		a  = & a_0 1+ \sum a_1(n_0) y^{n_0} + \sum a_2(n_0) y^{n_0}  x \\
 &  + \sum a_3(n_{0,1}) y^{n_0}  x y^{n_1} +  a_7 x + \sum a_8(n_1) xy^{n_1}   \\
		&  +  \sum a_4(n_{0, p};m_{1, p}) y^{n_0} x y^{n_1} z^{m_1} \cdots z^{m_p} \\
		&  +  \sum a_{5}(n_{0, p};m_{1, p}) y^{n_0} x y^{n_1} z^{m_1} \cdots z^{m_p}x \\
		&  +  \sum a_{6}(n_{0, p+1};m_{1, p}) y^{n_0} x y^{n_1} z^{m_1} \cdots z^{m_p} x y^{n_{p+1}}  \\
		&  +  \sum a_{9}(n_{1, p};m_{1, p}) x y^{n_1} z^{m_1}  \cdots z^{m_p}  \\
		&  +  \sum a_{10}(n_{1, p};m_{1, p}) x y^{n_1} z^{m_1}  \cdots z^{m_p}x  \\
		&  +  \sum a_{11}(n_{1, p+1};m_{1, p}) xy^{n_1} z^{m_1} \cdots z^{m_p} x y^{n_{p+1}}\\
		&  +  \sum a_{12}(n_{1, p};m_{1, p})  y^{n_1} z^{m_1}  \cdots z^{m_p}  \\
		&  +  \sum a_{13}(n_{1, p};m_{1, p})   y^{n_1} z^{m_1} \cdots z^{m_p}x  \\
		&  +  \sum a_{14}(n_{1, p+1};m_{1, p}) y^{n_1} z^{m_1}  \cdots z^{m_p} x y^{n_{p+1}}\\
		& + \sum a_{15}(n_{2, p};m_{1, p}) z^{m_1} \cdots z^{m_p} \\
		& + \sum a_{16}(n_{2, p};m_{1, p}) z^{m_1} \cdots z^{m_p} x\\
		& + \sum a_{17}(n_{2, p+1};m_{1, p}) z^{m_1} \cdots z^{m_p}xy^{n_{p+1}},
	\end{array}
\end{equation}  
	where  each summation ranges over all values of the indices $p\geqslant 1, n_0,n_1,\cdots, n_{p+1}, $ $m_1,\cdots,m_p \geqslant 1$  that explicitly appear in its summand.

To facilitate the detailed calculations, we first state and prove the necessary lemma.

\begin{lem}\label{lem:Koszulness}
    Let $a\in A$, then we have the following implications: 
    \begin{enumerate}[(\roman*)]
        \item\label{Koszulness_a(x+y)} $a(x+y) = 0$ $\implies$ $a = 0$; 
         \item\label{Koszulness_ya} $ya = 0$ $\implies$ $a = 0$;
         \item\label{Koszulness_ax}  $ax = 0$ $\implies$ $a = a'(x+y)$ for some $a'\in A$; 
         \item\label{Koszulness_xa} $xa = 0$ $\implies$ $a = za'$ for some $a'\in A$; 
    \end{enumerate}
\end{lem}
\begin{proof}
    Since $A$ is Koszul, its left Koszul resolution  is
    $K^L_\bullet:=A\otimes V_\bullet$ with the differentials defined as follows. 
$$\begin{array}{l}d^L_{4} :K^L_4\to K^L_3 ,\quad a\otimes ( x+y) xzy \mapsto a( x+y) \otimes xzy, \\
d^L_3:K^L_3\to K^L_2 ,\quad \begin{cases} a\otimes (x+y) xz \mapsto a(x+y) \otimes xz,\\ b\otimes  xzy \mapsto bx\otimes zy ,\end{cases} \\
d^L_2:K^L_2\to K^L_1 ,\quad \begin{cases} a\otimes (x +y)x \mapsto a(x+y) \otimes x,\\ b\otimes xz \mapsto bx\otimes z,\\ c\otimes zy \mapsto cz\otimes y,\end{cases} \\
d^L_{1} :K^L_1\to K^L_0 ,\quad a\otimes p\mapsto ap,\ p=x,y,z.\end{array}$$ 

Suppose $a\in A$ satisfies $a(x+y) = 0.$ It follows that $a\otimes ( x+y) xzy \in \ker d_4^L.$ By the exactness, we have $\ker d_4^L=0,$ which implies that $a=0.$ This gives \ref{Koszulness_a(x+y)}. 

Suppose $b \in A$ satisfies $bx=0$. It follows that $b\otimes xzy\in \ker d_3^L.$ By the exactness, we have $\ker d_4^L=\im d_4^L = \{a(x+y)\otimes xzy\mid a\in A\}.$ 
Therefore $b=a(x+y)$ for some $a\in A.$ This gives \ref{Koszulness_ax}. 

The other two conclusions follow similarly by appealing to the exactness of the right Koszul resolution.
\end{proof} 

Our strategy for computing Hochschild (co)homology proceeds in two main steps.
\begin{itemize}
  \item First, for any element in $\ker d_n$  (resp. $\ker d_n^*$), we simplify its representative by adding an appropriate element from $\im d_{n+1}$ (resp. $\im d_{n-1}^*$ ) without changing its (co)homology class in (co)homology.
  \item Then, the remaining relations imposed by the cycle (resp. cocycle) condition determine an explicit basis for the (co)homology space.
\end{itemize}

\begin{prop}\label{prop:HH_4}
   The Hochschild homology $\HH_{4}(A)$ of $A$ vanishes. 
\end{prop}
\begin{proof}
    Since $d_5=0$, we have $\HH_4(A) = \ker d_4$. For $a\otimes (x+y)xzy\in A\otimes V_4$, we have $d_4(a) = a(x+y)\otimes xzy + ya\otimes (x+y)xz$. Thus $a\in\ker d_4$ is equivalent to the conditions 
    $$        a(x+y) = 0 \quad  \text{and}\quad   ya = 0 .$$
 By Lemma \ref{lem:Koszulness}, either of the two conditions above   implies that $a=0$. Therefore, $\HH_4(A) = 0$.  
\end{proof}

\begin{prop}\label{prop:HH_3}
   The Hochschild homology $\HH_{3}(A)$ of $A$ vanishes. 
\end{prop}
\begin{proof}
Let $\alpha = a\otimes xzy + b\otimes (x+y)xz \in\ker d_3$. From $d_3(\alpha) =0$, we obtain
$$\begin{array}{c}
     ax = 0,   \\ 
     b(x+y)-ya = 0,   \\ 
     zb = 0. 
\end{array}$$
By Lemma~\ref{lem:Koszulness}, $ax=0$ implies $a = a'(x+y)$ for some $a'\in A$. Substituting this expression for $a$ into the second equation gives:
$$ b(x+y)-ya = b(x+y)-ya'(x+y) = (b-ya')(x+y)= 0.$$
By Lemma~\ref{lem:Koszulness} again, we have $b-ya'=0$, i.e., $b=ya'$. It follows that
$$ \ker d_3\subseteq \{a'(x+y)\otimes xzy + ya'\otimes (x+y)xz \mid a'\in A\}.$$
On the other hand, 
$$ \im d_4 = \left\{c( x+y) \otimes xzy+yc\otimes (x+y)xz   \mid c\in A\right\}.$$
Comparing this with the description of $\ker d_3$ obtained earlier, we observe that the two sets are identical. 
Therefore, $\ker d_3 = \operatorname{im} d_4$ and $\HH_3(A) = 0.$
\end{proof}

\begin{prop}\label{prop:HH_2}
   The Hochschild homology $\HH_{2}(A)$ of $A$ vanishes. 
\end{prop}
\begin{proof}
Let $\alpha = c\otimes xz + d\otimes zy + e\otimes (x+y)x \in \ker d_2.$
We first simpify the expression of  $c$ by adding a suitable element of $\im d_3$ to $\alpha$. 
A direct computation shows that  the space $\{ w \in A \mid  u\otimes (x+y)x + v \otimes xz + w\otimes zy \in \im d_3, \text{ for some } u,w\in A   \}$ has the following basis.
\begin{itemize}
  \item $ x,  y^{n_0},  y^{n_0} x, y^{n_0} x y^{n_1},  xy^{n_1}, $
  \item $y^{n_0} x y^{n_1} z^{m_1}   \cdots z^{m_p}, $
  \item $y^{n_0} x y^{n_1} z^{m_1}   \cdots z^{m_p}x, $
  \item $y^{n_0} x y^{n_1} z^{m_1}   \cdots z^{m_p}xy^{n_{p+1}}, $
  \item $ x y^{n_1} z^{m_1}   \cdots z^{m_p}x, $
  \item $ x y^{n_1} z^{m_1}   \cdots z^{m_p}xy^{n_{p+1}}, $
  \item $y^{n_1} z^{m_1}   \cdots z^{m_p}, $
  \item $y^{n_1} z^{m_1}   \cdots z^{m_p}x, $
  \item $y^{n_1} z^{m_1}   \cdots z^{m_p}xy^{n_{p+1}}, $
  \item $ z^{m_1}   \cdots z^{m_p}x, $
  \item $ z^{m_1}   \cdots z^{m_p}xy^{n_{p+1}}, $
\end{itemize}
for $p, n_{0}, \cdots, n_{p+1}, m_{1}, \cdots, m_p \geqslant 1.$
Therefore, we may assume that the expression for $c$ does not contain the above monomials, i.e., we set
 $$ \begin{array}{rl} c = &c_0 \cdot 1 +  \sum c_9(n_{1,p};m_{1,p})xy^{n_1}z^{m_1}\cdots z^{m_p}  + \sum c_{15}(n_{2,p};m_{1,p}) z^{m_1}\cdots z^{m_p},\end{array}$$
where the summations run over all basis elements of $\B$ in Corollary~\ref{cor:k-basis_A}  of the corresponding types.
Now $\alpha\in\ker d_2$ if and only if 
\begin{gather}
     e(x+y) + zc + xe = 0, \label{eq:1st-kerd_2}
     \\ xe + dz = 0, \label{eq:2nd-kerd_2}
     \\ cx + yd  =0. \label{eq:3rd-kerd_2}
\end{gather}
We note that each monomial in $yd$ begins with $y$, while each monomial in $cx$ begins with either $x$ or $z$ under our assumption on $c$. Consequently, equation~\eqref{eq:3rd-kerd_2}  forces $yd=cx=0.$
Since 
$$ \begin{array}{rl}
    cx= & c_0 x + c_9(n_{1,p};m_{1,p})xy^{n_1}z^{m_1}\cdots z^{m_p}x  +c_{15}(n_{2,p};m_{1,p}) z^{m_1}\cdots z^{m_p}x 
     =  0,\\
\end{array}$$
it follows that  $c_0 = c_{9} = c_{15}=0,$ whence $c=0$. 
By Lemma \ref{lem:Koszulness}, $yd =0$ implies $d = 0$. Therefore, we have $c=0$ and $d=0$. 
With this, equations~\eqref{eq:1st-kerd_2} and \eqref{eq:2nd-kerd_2} reduces to $e(x+y) = 0$, which, by Lemma \ref{lem:Koszulness}, forces $e=0$. 
Therefore, we have $\HH_2(A)=0$.  
\end{proof}

The computation of $\HH_1(A)$ proceeds as follows.
\begin{proof}[Proof of Proposition~\ref{prop:HH_1}]
Let $\alpha = a\otimes x + b\otimes y+ c\otimes z \in \ker d_1.$ 
Following an argument similar to that in the previous proof, we first simplify the expressions for  $a,b,$ and $c$.
A direct computation yields the following results: 

 $(i)$  $\{ v \in A \mid  u\otimes x + v \otimes y + w\otimes z \in \im d_2, \text{ for some } u,w\in A   \}$ has the following basis $R_2$:
\begin{itemize}
  \item $y^{n_0}x, y^{n_0}x y^{n_1}, x, xy^{n_1},$
  \item $  y^{n_0} x y^{n_1} z^{m_1} \cdots z^{m_p},$
  \item $  y^{n_0} x y^{n_1} z^{m_1} \cdots z^{m_p}x,$
  \item $  y^{n_0} x y^{n_1} z^{m_1} \cdots z^{m_p}xy^{n_{p+1}},$
  \item $  x y^{n_1} z^{m_1} \cdots z^{m_p},$
  \item $  x y^{n_1} z^{m_1} \cdots z^{m_p}x,$
  \item $  x y^{n_1} z^{m_1} \cdots z^{m_p}xy^{n_{p+1}},$
  \item $  y^{n_1} z^{m_1} \cdots z^{m_p},$
  \item $  z^{m_1} \cdots z^{m_p},$
\end{itemize}
for $p, n_{0}, \cdots, n_{p+1}, m_{1}, \cdots, m_p \geqslant 1.$ 

$(ii)$ $\{ w \in A \mid  u\otimes x  + w\otimes z \in \im d_2,   \text{ for some } u\in A \}$ has the following basis $R_3$:
\begin{itemize}
  \item $y^{n_0}x, y^{n_0}xy^{n_1},x,$
  \item $  y^{n_0} x y^{n_1} z^{m_1} \cdots z^{m_p},$
  \item $  y^{n_0} x y^{n_1} z^{m_1} \cdots z^{m_p}x,$
  \item $  y^{n_0} x y^{n_1} z^{m_1} \cdots z^{m_p}xy^{n_{p+1}},$
  \item $  x y^{n_1} z^{m_1} \cdots z^{m_p}x,$
  \item $  y^{n_1} z^{m_1} \cdots z^{m_p}x,$
  \item $  z^{m_1} \cdots z^{m_p}x,$
\end{itemize}
for $p, n_{0}, \cdots, n_{p+1}, m_{1}, \cdots, m_p \geqslant 1.$ 

$(iii)$ $\{ u \in A \mid  u\otimes x  \in \im d_2   \}$ has the following basis $R_1$:
\begin{itemize}
  \item $ z^{m_1} \cdots z^{m_p}x,$
  \item $ z^{m_1} \cdots z^{m_p}xy^{n_{p+1}},$
\end{itemize}
for $p, n_{2}, \cdots, n_{p+1}, m_{1}, \cdots, m_p \geqslant 1.$

Thus, we may assume that 
$$ \begin{array}{rl}
		a  = & a_0 1+ \sum a_1(n_0) y^{n_0} + \sum a_2(n_0) y^{n_0}  x \\
        &   + \sum a_3(n_{0,1}) y^{n_0}  x y^{n_1} +  a_7 x + \sum a_8(n_1) xy^{n_1}   \\
		&  +  \sum a_4(n_{0, p};m_{1, p}) y^{n_0} x y^{n_1} z^{m_1} \cdots z^{m_p} \\
		&  +  \sum a_{5}(n_{0, p};m_{1, p}) y^{n_0} x y^{n_1} z^{m_1} \cdots z^{m_p}x \\
		&  +  \sum a_{6}(n_{0, p+1};m_{1, p}) y^{n_0} x y^{n_1} z^{m_1} \cdots z^{m_p} x y^{n_{p+1}}  \\
		&  +  \sum a_{9}(n_{1, p};m_{1,  p}) x y^{n_1} z^{m_1}  \cdots z^{m_p}  \\
		&  +  \sum a_{10}(n_{1, p};m_{1, p}) x y^{n_1} z^{m_1}  \cdots z^{m_p}x  \\
		&  +  \sum a_{11}(n_{1, p+1};m_{1, p}) xy^{n_1} z^{m_1} \cdots z^{m_p} x y^{n_{p+1}}\\
		&  +  \sum a_{12}(n_{1, p};m_{1, p})  y^{n_1} z^{m_1}  \cdots z^{m_p}  \\
		&  +  \sum a_{13}(n_{1, p};m_{1, p})   y^{n_1} z^{m_1} \cdots z^{m_p}x  \\
		&  +  \sum a_{14}(n_{1, p+1};m_{1, p}) y^{n_1} z^{m_1}  \cdots z^{m_p} x y^{n_{p+1}}\\
		& + \sum a_{15}(n_{2, p};m_{1, p}) z^{m_1} \cdots z^{m_p},
	\end{array} $$
$$ \begin{array}{rl}
		b  = & b_0 1+ \sum b_1(n_0) y^{n_0}   \\
		&  +  \sum b_{13}(n_{1, p};m_{1, p})   y^{n_1} z^{m_1} \cdots z^{m_p}x  \\
		&  +  \sum b_{14}(n_{1, p+1};m_{1, p}) y^{n_1} z^{m_1}  \cdots z^{m_p} x y^{n_{p+1}}\\
		& + \sum b_{16}(n_{2, p};m_{1, p}) z^{m_1} \cdots z^{m_p} x\\
		& + \sum b_{17}(n_{2, p+1};m_{1, p}) z^{m_1} \cdots z^{m_p}xy^{n_{p+1}},
	\end{array} $$
$$ \begin{array}{rl}
		c  = & c_0 1+ \sum c_1(n_0) y^{n_0}  + \sum c_8(n_1) xy^{n_1}   \\
		&  +  \sum c_{9}(n_{1, p};m_{1, p}) x y^{n_1} z^{m_1}  \cdots z^{m_p}  \\
		&  +  \sum c_{11}(n_{1, p+1};m_{1, p}) xy^{n_1} z^{m_1} \cdots z^{m_p} x y^{n_{p+1}}\\
		&  +  \sum c_{12}(n_{1, p};m_{1, p})  y^{n_1} z^{m_1}  \cdots z^{m_p}  \\
		&  +  \sum c_{14}(n_{1, p+1};m_{1, p}) y^{n_1} z^{m_1}  \cdots z^{m_p} x y^{n_{p+1}}\\
		& + \sum c_{15}(n_{2, p};m_{1, p}) z^{m_1} \cdots z^{m_p} \\
		& + \sum c_{17}(n_{2, p+1};m_{1, p}) z^{m_1} \cdots z^{m_p}xy^{n_{p+1}},
	\end{array} $$

Substituting the above expressions for $a,b,$ and $c$ into 
\begin{equation*}\label{eq: d_1 alpha =0}
d_1(\alpha)=ax-xa+by-yb+cz-zb=0
\end{equation*} 
and comparing coefficients of each basis element  of $A$, we obtain relations among all coefficients of $a,b,$ and $c$ as shown below: 

\begin{enumerate}[(\roman*)]
  \item the coefficient of $1$   is already zero;
  \item the coefficient of $y^{n_0}$   is already   zero; 
  \item from the coefficients of $y^{n_0}x$ and $y^{n_0}xy^{n_1}$, we obtain $a_1=a_3=a_8=0$; 
  \item the coefficient of $x$   is  already zero;
  \item from the coefficient of $xy^{n_1}$,  we obtain  $a_1=0;$
  \item\label{a_4 = a_9 =0} from the coefficient of $y^{n_0}xy^{n_1}z^{m_1}\cdots z^{m_p}$,  we obtain   $a_4=a_9=0;$
  \item from the coefficient of $y^{n_0}xy^{n_1}z^{m_1}\cdots z^{m_p}x$, together with \ref{a_4 = a_9 =0}, we obtain $a_5=a_{10}=0;$
  \item from the coefficient of $y^{n_0}xy^{n_1}z^{m_1}\cdots z^{m_p}xy^{n_{p+1}}$, we obtain $a_6=a_{11}=0;$
  \item from the coefficient of $xy^{n_1}z^{m_1}\cdots z^{m_p}$, we obtain 
  
  $ a_{12}(n_1;1) = c_8(n_1), $
  
  $a_{12}(n_{1,p};m_{1,p-1}, m_{p}+1) = c_9(n_{1,p}; m_{1,p}),  $ 
  
  $a_{12}(n_{1,p+1}; m_{1,p},1) = c_{11}(n_{1,p+1};m_{1,p}),$
  
  \noindent for   $p, n_{1},\cdots,n_{p+1},m_1,\cdots,m_p\geqslant 1;$
  
  \item from the coefficient of $xy^{n_1}z^{m_1}\cdots z^{m_p}x$, together with \ref{a_4 = a_9 =0}, we obtain  $a_{13}=0;$
  \item from the coefficient of $xy^{n_1}z^{m_1}\cdots z^{m_p}xy^{n_{p+1}}$,  we obtain  $a_{14}=0;$
  \item from the coefficient of $y^{n_1}z^{m_1}\cdots z^{m_p}$,  we obtain  $c_1=c_{12}= c_{14} = 0;$
  \item from the coefficient of $y^{n_1}z^{m_1}\cdots z^{m_p}x$,  we obtain  
  
  $a_{12}(1,n_{2,p}; m_{1,p}) = b_{16}(n_{2,p};m_{1,p}),$
  
  $a_{12}(n_1+1,n_{2,p};m_{1,p}) = b_{13}(n_{1,p}; m_{1,p}),$
  
  \noindent for   $p, n_{1},\cdots,n_{p},m_1,\cdots,m_p\geqslant 1;$
  
  \item from the coefficient of $y^{n_1}z^{m_1}\cdots z^{m_p}xy^{n_{p+1}}$,  we obtain  
  
  $ b_{13}(1,n_{2,p}; m_{1,p}) = b_{17}(n_{2,p},1; m_{1,p}),  $ 
  
  $ b_{13}(n_1+1, n_{2,p}; m_{1,p}) = b_{14}(n_{1,p},1; m_{1,p}),$
  
  $b_{14}(1,n_{2,p+1};m_{1,p}) = b_{17}(n_{2,p},n_{p+1}+1; m_{1,p}),$
  
  $b_{14}(n_1+1, n_{2,p+1}; m_{1,p}) = b_{14}(n_{1,p},n_{p+1}+1;m_{1,p}), $
  
  \noindent for   $p, n_{1},\cdots,n_{p+1},m_1,\cdots,m_p\geqslant 1;$
  
  \item from the coefficient of $z^{m_1}\cdots z^{m_p}$,  we obtain  
  
  $ c_{17}(n_{2,p+1}; 1,m_{2,p}) = c_9(n_{2,p+1}; m_{2,p},1),  $
  
  $ c_{17}(n_{2,p+1}; m_1+1, m_{2,p}) = c_{15}(n_{2,p+1}; m_{1,p},1),  $
  
  $ c_{15}(n_{2,p+1}; 1, m_{2,p+1}) = c_9(n_{2,p+1}; m_{2,p},m_{p+1}+1),$
  
  $ c_{15}(n_{2,p+1};m_1+1,m_{2,p+1}) = c_{15}(n_{2,p+1};m_{1,p},m_{p+1}+1), $
  
  \noindent for   $p, n_{1},\cdots,n_{p+1},m_1,\cdots,m_{p+1}\geqslant 1;$
  
  \item from the coefficient of $z^{m_1}\cdots z^{m_p}x$,  we obtain   $a_{15}=0;$
  
  \item from the coefficient of $z^{m_1}\cdots z^{m_p}xy^{n_{p+1}}$,  we obtain    
  
  $ b_{16}(; 1) = c_8(1), $
  
  $ b_{17}(n_2;1) = c_8(n_2+1),$
  
  $ b_{16}(n_{2,p+1}; 1,m_{2,p+1}) = c_{11}(n_{2,p+1},1;m_{1,p+1}),$ 
  
  $ b_{17}(n_{2,p+2}; 1,m_{2,p+1}) = c_{11}(n_{2,p+1}, n_{p+2}+1; m_{2,p+1}),$
  
  $ b_{16}(n_{2,p}; m_1+1,m_{2,p}) = c_{17}(n_{2,p},1; m_{1,p}), $
  
  $ b_{17}(n_{2,p+1}; m_1+1,m_{2,p}) = c_{17}(n_{2,p},n_{p+1}+1;m_{1,p}),$
  
  \noindent for   $p, n_{1},\cdots,n_{p+2},m_1,\cdots,m_{p+1}\geqslant 1.$
\end{enumerate} 
Therefore, $a,b,$ and $c$ have the following expressions: 
$$ \begin{array}{rl}
		a  = & a_0 1 + \sum a_2(n_0) y^{n_0}  x +  a_7 x   \\
		&  +  \sum a_{12}(n_{1, p};m_{1, p})  y^{n_1} z^{m_1}  \cdots z^{m_p},\\
		b  = & b_0 1+ \sum b_1(n_0) y^{n_0}   \\
		&  +  \sum b_{13}(n_{1, p};m_{1, p})   y^{n_1} z^{m_1} \cdots z^{m_p}x  \\
		&  +  \sum b_{14}(n_{1, p+1};m_{1, p}) y^{n_1} z^{m_1}  \cdots z^{m_p} x y^{n_{p+1}}\\
		& + \sum b_{16}(n_{2, p};m_{1, p}) z^{m_1} \cdots z^{m_p} x\\
		& + \sum b_{17}(n_{2, p+1};m_{1, p}) z^{m_1} \cdots z^{m_p}xy^{n_{p+1}},\\
		c  = & c_0 1  + \sum c_8(n_1) xy^{n_1}   \\
		&  +  \sum c_{9}(n_{1, p};m_{1, p}) x y^{n_1} z^{m_1}  \cdots z^{m_p}  \\
		&  +  \sum c_{11}(n_{1, p+1};m_{1, p}) xy^{n_1} z^{m_1} \cdots z^{m_p} x y^{n_{p+1}}\\
		& + \sum c_{15}(n_{2, p};m_{1, p}) z^{m_1} \cdots z^{m_p} \\
		& + \sum c_{17}(n_{2, p+1};m_{1, p}) z^{m_1} \cdots z^{m_p}xy^{n_{p+1}},
	\end{array} $$
where the coefficients $a_0,a_2,a_7,b_0,b_1,c_0,c_{15}(;m_1)$ are free, and among the remaining terms, the coefficients within each of the two classes listed below are respectively equal.
\begin{itemize}
  \item  the term $y^{n_1}z$ in $a$, 
  
  \noindent  the terms $y^{i}zxy^{n_1-i-1}$ in $b$, for $0\leqslant i\leqslant n_1-1,$
  
  \noindent   the term  $xy^{n_1}$ in $c$;
  \item  the terms $y^{n_i}z^{m_i}\cdots z^{m_p} xy^{n_1}z^{m_1}\cdots z^{m_{i-1}}$ in $a$, for $ 1\leqslant i\leqslant p,$ 
  
  \noindent  the terms $y^{j}z^{m_i}\cdots z^{m_p} xy^{n_1}z^{m_1}\cdots z^{m_{i-1}}xy^{n_i-j-1}$ in $b$,  for $ 1\leqslant i\leqslant p, 1\leqslant j\leqslant n_i-1,$
  
  \noindent  the terms $z^{j}xy^{n_{i+1}} z^{m_{i+1}}\cdots z^{m_p} xy^{n_1}z^{m_1}\cdots z^{m_{i-1}}xy^{n_i} z^{m_i-j-1}$ in $c$ , for $ 1\leqslant i\leqslant p, 1\leqslant j\leqslant m_i-1$.

\end{itemize}
Consequently, $\alpha$ must be a linear combination of the following mutually independent elements:
$$1\ot x, \  y^{n-1}x \ot x, \ y^{n-1} \ot y,\  z^{n-1} \ot z,\  \sum_{uvw= xy^{n_1}z^{m_1}\cdots z^{m_p}} wu \otimes v,$$ 
for $n,p, n_1,\cdots, n_p,m_1,\cdots, m_p\geqslant 1.$  
Since we assume that $a,b,$ and $c$ in $\alpha$ contain no terms corresponding  respectively  to the monomials in $R_1,R_2,$ and $R_3$,  the above elements remain linearly independent modulo $ \im d_2.$
Therefore, the set of their homology classes gives a basis for  $\HH_{1}(A)$, precisely as listed.
\end{proof}

The computation of $\HH_0(A)$ proceeds as follows.
\begin{proof}[Proof of Proposition~\ref{prop:HH_0}]
It is easy to see that the elements listed in the proposition are linearly independent in $\HH_0(A)$; 
therefore, it suffices to verify that every element of $A$ can be written as a linear combination of these elements. 
According to Remark~\ref{rmk:HH_0}, monomials in $A$ satisfy that moving a letter from one side to the other does not change their class in $\HH_0(A)$.
Using this fact, we give the images of the basis elements $\B$ of $A$ in $\HH_0(A)$ as follows:
   \begin{itemize}
     \item  $1=\zeta_0,$
     \item $y^{n_0} = \beta_0(n_0),$
     \item  $y^{n_0}x = (-1)^{n_0} x^{n_0+1} =  (-1)^{n_0} \alpha_0(n_0+1),$
     \item $ y^{n_0}xy^{n_1} = y^{n_0+n_1}x =(-1)^{n_0+n_1} x^{n_0+n_1+1} =  (-1)^{n_0+n_1} \alpha_0(n_0+n_1+1),$
     \item $x=\alpha_0(1),$
     \item $xy^{n_1} = y^{n_1}x = (-1)^{n_1}x^{n_1+1} = (-1)^{n_1} \alpha_0(n_1+1),$
     \item $y^{n_0} x y^{n_1} z^{m_1}   \cdots z^{m_p} = y^{n_0-1} x y^{n_1} z^{m_1}   \cdots z^{m_p}y = 0,$
     \item $y^{n_0} x y^{n_1} z^{m_1}   \cdots z^{m_p}x = y^{n_1} z^{m_1}   \cdots z^{m_p}xy^{n_0} x   = 0,   $
     \item $ y^{n_0} x y^{n_1} z^{m_1}   \cdots z^{m_p}x y^{n_{p+1}} =y^{n_1} z^{m_1}   \cdots z^{m_p}x y^{n_{p+1}+n_0} x =0, $
     \item $ x y^{n_1} z^{m_1}   \cdots z^{m_p} = \overline \epsilon_0(n_{1,p}; m_{1,p}), $
     \item  $x y^{n_1} z^{m_1}   \cdots z^{m_p}x =  y^{n_1} z^{m_1}   \cdots z^{m_p}xx = 0, $
     \item $x y^{n_1} z^{m_1}   \cdots z^{m_p}xy^{n_{p+1}} =  y^{n_1} z^{m_1}   \cdots z^{m_p}xy^{n_{p+1}}x = 0,  $
     \item $ y^{n_1} z^{m_1}   \cdots z^{m_p} = y^{n_1-1}z^{m_1}   \cdots z^{m_p}y = 0, $ 
     \item $y^{n_1} z^{m_1}   \cdots z^{m_p} x = xy^{n_1} z^{m_1}   \cdots z^{m_p} = \overline \epsilon_0(n_{1,p}; m_{1,p}),  $
     \item $y^{n_1} z^{m_1}   \cdots z^{m_p} x y^{n_{p+1}} = x y^{n_+n_{p+1}} z^{m_1}   \cdots z^{m_p}  = \overline \epsilon_0(n_{1}+n_{p+1},n_{2,p}; m_{1,p}), $
     \item $  z^{m_1}   \cdots z^{m_p} = xy^{n_2}z^{m_2}\cdots z^{m_{p-1}} xy^{n_p}z^{m_1+m_p} = \overline \epsilon_0(n_{2,p}; m_{2,p-1}, m_1+m_p),  $
     \item $z^{m_1}   \cdots z^{m_p}x = xz^{m_1}   \cdots z^{m_p} =0, $
     \item $z^{m_1}   \cdots z^{m_p}x y^{n_{p+1}} = x y^{n_{p+1}} z^{m_1}   \cdots z^{m_p} = \overline \epsilon_0(n_{p+1},n_{2,p}; m_{1,p}),$
   \end{itemize}
   for   positive integers  $p,n_0,\cdots, n_{p+1},$ and $ m_1, \cdots, m_p.$ 
    Among the relations above, those equal to zero follow from $xz = zxy^n x = zy = 0$ in $A$, for $n\geqslant 0.$
\end{proof}

\bigskip

The computation of $\HH^0(A)$ proceeds as follows.
\begin{proof}[Proof of Proposition~\ref{prop: HH^0}]
 By definition, we have   $\HH^0(A) =\ker d_1^*$.  
 Now take an arbitrary element $a\in \ker(d_1^{*}).$ 
The condition  $d_1^{*}(a)   = 0 $ implies that $a$ commutes with $x,y$ and $z$:
\begin{gather*}
  xa-ax  = 0,  \label{eq: 1st d_1*}   \\ 
  ya-ay  = 0, \label{eq: 2nd d_1*} \\ 
  za-az  = 0.   \label{eq: 3rd d_1*} 
   \end{gather*} 
Expanding $a$ in the basis $\B$ and equating coefficients in the preceding equation, we obtain 
 $a= a_0 \cdot  1.$
Hence $\HH^{0}(A) = \ker(d_1^{*}) = \{ a_0 \mid a_0 \in \bfk \}\cong \bfk.$
\end{proof}

The computation of $\HH^{-1}(A)$ proceeds as follows.
\begin{proof}[Proof of Proposition~\ref{prop: HH^1}]
Let $f\in \ker d_{2}^{*},$ we write $f = (x \mapsto a, y \mapsto b, z \mapsto c).$
Analogous to the homological case, we modify $f$ by a suitable element of $\im d_1^*$  to simplify  the element  $a,b,$ and $c.$
A direct computation shows that the vector space $\{\phi(x) \mid \phi \in \im d_1^* \}$  has a basis $R_1$  consisting of: 
  \begin{itemize}
    \item $xy^{n_0} - y^{n_0}x, $
\item $y^{n_0}x y^{n_1} - y^{n_0+n_1}x,$
\item $y^{n_0}xy^{n_1}z^{m_1}\cdots z^{m_p},$
\item $y^{n_0}xy^{n_1}z^{m_1}\cdots z^{m_p}x,$
\item $y^{n_0}xy^{n_1}z^{m_1}\cdots z^{m_p}xy^{n_{p+1}},$
\item $xy^{n_1}z^{m_1}\cdots z^{m_p} - y^{n_1}z^{m_1}\cdots z^{m_p}x,$
\item $xy^{n_1}z^{m_1}\cdots z^{m_p}x, $
\item $xy^{n_1}z^{m_1}\cdots z^{m_p}xy^{n_{p+1}}, $
\item $z^{m_1} \cdots z^{m_p}x, $
  \end{itemize}
for $p, n_{0}, \cdots, n_{p+1}, m_{1}, \cdots, m_p \geqslant 1.$

 The vector space $\{\phi(y) \mid \phi \in \im d_1^*, \phi(x)=0 \}$ has a basis $R_2$  consisting of: 
  \begin{itemize}
  \item $y^{n_1}x - y^{n_1-1}xy,$
  \item $z^{m_1} \cdots  z^{m_p}xy  -  yz^{m_1} \cdots z^{m_p}x,$
  \item $ z^{m_1} \cdots   z^{m_p}xy^{n_{p+1}+1}- yz^{m_1} \cdots  z^{m_p}xy^{n_{p+1}},$
\end{itemize}
 for $p, n_{0}, \cdots, n_{p+1}, m_{1}, \cdots, m_p \geqslant 1.$
 
 The vector space $\{\phi(z) \mid \phi \in \im d_1^*, \phi(x)=\phi(y)=0 \} $ is trivial.

Thus, we may assume that 
$$ \begin{array}{rl}
		a  = & a_0 1+ \sum a_1(n_0) y^{n_0} + \sum a_2(n_0) y^{n_0}  x    +  a_7 x   \\
		&  +  \sum a_{12}(n_{1, p};m_{1, p})  y^{n_1} z^{m_1}  \cdots z^{m_p}  \\
		&  +  \sum a_{13}(n_{1, p};m_{1, p})   y^{n_1} z^{m_1} \cdots z^{m_p}x  \\
		&  +  \sum a_{14}(n_{1, p+1};m_{1, p}) y^{n_1} z^{m_1}  \cdots z^{m_p} x y^{n_{p+1}}\\
		& + \sum a_{15}(n_{2, p};m_{1, p}) z^{m_1} \cdots z^{m_p} \\
		& + \sum a_{17}(n_{2, p+1};m_{1, p}) z^{m_1} \cdots z^{m_p}xy^{n_{p+1}},
	\end{array} $$
$$ \begin{array}{rl}
		b  = & b_0 1+ \sum b_1(n_0) y^{n_0}  \\
        &  + \sum b_3(n_{0,1}) y^{n_0}  x y^{n_1} +  b_7 x + \sum b_8(n_1) xy^{n_1}   \\   
		&  +  \sum b_4(n_{0, p};m_{1, p}) y^{n_0} x y^{n_1} z^{m_1} \cdots z^{m_p} \\
		&  +  \sum b_{5}(n_{0, p};m_{1, p}) y^{n_0} x y^{n_1} z^{m_1} \cdots z^{m_p}x \\
		&  +  \sum b_{6}(n_{0, p+1};m_{1, p}) y^{n_0} x y^{n_1} z^{m_1} \cdots z^{m_p} x y^{n_{p+1}}  \\
		&  +  \sum b_{9}(n_{1, p};m_{1, p}) x y^{n_1} z^{m_1}  \cdots z^{m_p}  \\
		&  +  \sum b_{10}(n_{1, p};m_{1, p}) x y^{n_1} z^{m_1}  \cdots z^{m_p}x  \\ 
		&  +  \sum b_{11}(n_{1, p+1};m_{1, p}) xy^{n_1} z^{m_1} \cdots z^{m_p} x y^{n_{p+1}}\\
		&  +  \sum b_{12}(n_{1, p};m_{1, p})  y^{n_1} z^{m_1}  \cdots z^{m_p}  \\
		&  +  \sum b_{13}(n_{1, p};m_{1, p})   y^{n_1} z^{m_1} \cdots z^{m_p}x  \\
		&  +  \sum b_{14}(n_{1, p+1};m_{1, p}) y^{n_1} z^{m_1}  \cdots z^{m_p} x y^{n_{p+1}}\\
		& + \sum b_{15}(n_{2, p};m_{1, p}) z^{m_1} \cdots z^{m_p} \\
		& + \sum b_{16}(n_{2, p};m_{1, p}) z^{m_1} \cdots z^{m_p} x,
	\end{array} $$
and $c$ adopts the expression defined in~\eqref{eq: expression of a}.

Since $f \in  \ker d_{2}^{*} $ is a  $1$-cocycle, we have the following relations.
\begin{gather}
  (x+y)a + ax +bx  = 0,  \label{eq: 1st d_2*}   \\ 
   xc + az  = 0, \label{eq: 2nd d_2*} \\ 
   zb + cy = 0.   \label{eq: 3rd d_2*} 
   \end{gather}
Substituting the expressions for $a,b,$ and $c$ into the three equations above, we obtain the following relations among their coefficients: 

Equation \eqref{eq: 1st d_2*} implies that 
\begin{enumerate}[(\roman*)]
  \item the coefficients  $a_0,a_1, a_{12},a_{14},a_{15},a_{17}  $ in $a$ are zero,
  \item the coefficients $b_0,b_4, b_{12}(1, n_{2,p};m_{1,p}) ,b_{15} $ in $b$ are zero,
  \item  $b_1(1) - b_7 - a_7 =0,$
  \item $a_2(n_0) - b_1(n_0+1) + \sum_{i+j=n_0}b_3(i,j) + b_8(n_0) = 0,$
  \item $a_{13}(n_{1,p}; m_{1,p}) = b_9(n_{1,p}; m_{1,p}) = b_{12}(n_1+1,n_{2,p}; m_{1,p}),$
\end{enumerate}
 for $p, n_{0}, \cdots, n_{p}, m_{1}, \cdots, m_p \geqslant 1.$

Using the results just obtained, equation \eqref{eq: 2nd d_2*} yields   $xc=0.$
Therefore, by Proposition~\ref{lem:Koszulness}, we have $c=zc'$ for some $c'\in A$. Consequently, $c$ admits the following expression: 
$$ \begin{array}{rl}
		c  = &   \sum b_{15}(n_{2, p};m_{1, p}) z^{m_1} \cdots z^{m_p} \\
		     & + \sum b_{16}(n_{2, p};m_{1, p}) z^{m_1} \cdots z^{m_p} x\\
             & + \sum b_{17}(n_{2, p+1};m_{1, p}) z^{m_1} \cdots z^{m_p} xy^{n_{p+1}},
	\end{array} $$

From the results obtained above, the third equation \eqref{eq: 3rd d_2*} gives 
\begin{enumerate}[(\roman*)]
\item the coefficients  $a_{13}$ in $a$ are zero,
\item  the coefficients $b_7,b_9, b_{10},b_{12}, b_{16}$ in $b$ are zero,
\item   the coefficients $c_{16}(n_{2,p};m_1+1,m_{2,p}), c_{17}(n_{2,p+1};m_1+1,m_{2,p})$  in $c$ are zero,
\item  $b_8(1) + c_{16}(;1) =0,$
\item $b_8(n_1+1) + c_{17}(n_1;1) = 0,$
\item $b_{11}(n_{1,p},1; m_{1,p}) + c_{16}(n_{1,p}; 1,m_{1,p}) = 0,$
\item $b_{11}(n_{1,p}, n_{p+1}+1; m_{1,p}) + c_{17}(n_{1,p+1}; 1,m_{1,p}) = 0, $
\end{enumerate}
 for $p, n_{0}, \cdots, n_{p+1}, m_{1}, \cdots, m_p \geqslant 1.$

In conclusion, we obtain the following expressions for $a,b,$ and $c$:
$$ \begin{array}{rl}
		a  = &  \sum a_2(n_0) y^{n_0}  x    +  a_7 x,\\
		b  = &   \sum b_1(n_0) y^{n_0}  + \sum b_3(n_{0,1}) y^{n_0}  x y^{n_1}   + \sum b_8(n_1) xy^{n_1}   \\   
		&  +  \sum b_{5}(n_{0, p};m_{1, p}) y^{n_0} x y^{n_1} z^{m_1} \cdots z^{m_p}x \\
		&  +  \sum b_{6}(n_{0, p+1};m_{1, p}) y^{n_0} x y^{n_1} z^{m_1} \cdots z^{m_p} x y^{n_{p+1}}  \\
		&  +  \sum b_{11}(n_{1, p+1};m_{1, p}) xy^{n_1} z^{m_1} \cdots z^{m_p} x y^{n_{p+1}}\\
		&  +  \sum b_{13}(n_{1, p};m_{1, p})   y^{n_1} z^{m_1} \cdots z^{m_p}x  \\
		&  +  \sum b_{14}(n_{1, p+1};m_{1, p}) y^{n_1} z^{m_1}  \cdots z^{m_p} x y^{n_{p+1}}, \\
		c  = &   \sum b_{15}(n_{2, p};m_{1, p}) z^{m_1} \cdots z^{m_p} \\
		     & + \sum b_{16}(n_{1, p};1,m_{1, p}) zxy^{n_1}z^{m_1} \cdots z^{m_p} x\\
             & + \sum b_{17}(n_{1, p+1};1,m_{1, p}) zxy^{n_1}z^{m_1} \cdots z^{m_p} xy^{n_{p+1}},
	\end{array} $$
where the coefficients $b_5,b_{6}, b_{13}, b_{14}, c_{15}$ are free, and the remaining coefficients satisfy the relations
\begin{enumerate}[(\roman*)]
  \item  $b_{1}(1) = a_{7},$
  \item $a_2(n_0) - b_1(n_0+1) + \sum_{i+j=n_0}b_3(i,j) + b_8(n_0) = 0,$
  \item $b_8(1) + c_{16}(;1) = 0,$
  \item $b_8(n_1+1) + c_{17}(n_1;1) = 0,$
  \item $b_{11}(n_{1,p},1; m_{1,p}) + c_{16}(n_{1,p};1,m_{1,p}) = 0,$
  \item $b_{11}(n_{1,p},n_{p+1}+1; m_{1,p}) + c_{17}(n_{1,p+1};1,m_{1,p})=0, $
\end{enumerate}
 for $p, n_{0}, \cdots, n_{p+1}, m_{1}, \cdots, m_p \geqslant 1.$
Therefore, $f$ can be expressed as a linear combination of the  maps listed in the proposition.
These maps remain linearly independent modulo $\im d_1^*,$  since we assume that   $a= f(x)$ and $b= f(y)$   contain no terms corresponding    to the monomials in $R_1$ and $R_2.$
\end{proof}

The computation of $\HH^{-2}(A)$ proceeds as follows.
\begin{proof}[Proof of Proposition~\ref{prop: HH^2}]
Let $f\in \ker d_{2}^{*},$ we write $f = ((x+y)x \mapsto a, xz \mapsto b, zy \mapsto c).$
As in the computation of  $\HH^{-1}(A)$, we begin by simplifying $a,b,$ and $c$ by adding to $f$ an appropriate element of  $\im d_2^{*}.$
Firstly, a direct computation shows that the vector space   $\{\phi(xz) \mid \phi \in \im d_2^*\}$   has a basis $R_2$ consisting of:  
\begin{itemize}
  \item $x, xy^{n_0}, y^{n_0}x, y^{n_0}xy^{n_1},$
  \item $y^{n_0}xy^{n_1}z^{m_1}\cdots z^{m_p},$
  \item $y^{n_0}xy^{n_1}z^{m_1}\cdots z^{m_p}x,$
  \item $y^{n_0}xy^{n_1}z^{m_1}\cdots z^{m_p}xy^{n_{p+1}},$
  \item $xy^{n_1}z^{m_1}\cdots z^{m_p},$
  \item $xy^{n_1}z^{m_1}\cdots z^{m_p}x,$
  \item $xy^{n_1}z^{m_1}\cdots z^{m_p}xy^{n_{p+1}},$
  \item $y^{n_1}z^{m_1}\cdots z^{m_p},$
  \item $z^{m_1} \cdots z^{m_p},$
\end{itemize}
 for $p, n_{0}, \cdots, n_{p+1}, m_{1}, \cdots, m_p \geqslant 1.$
 
  The  vector space  $\{\phi(zy)\mid \phi \in \im d_2^*,  \phi(xz)=0 \}$  has a basis $R_3$ consisting of:  
  \begin{itemize}
  \item  $z^{m_1} \cdots z^{m_p}, $
  \item  $z^{m_1} \cdots z^{m_p}x, $
  \item  $z^{m_1} \cdots z^{m_p}xy^{n_{p+1}}; $
\end{itemize}
 for $p, n_{2}, \cdots, n_{p+1}, m_{1}, \cdots, m_p \geqslant 1.$

 The vector space $\{\phi((x+y)x) \mid \phi \in \im d_2^*,  \phi(xz) = \phi(zy)=0 \}$ has a basis $R_1$ consisting of:  
\begin{itemize}
  \item $ y^{n_0}x, $
  \item $y^{n_0}xy^{n_1}z^{m_1}\cdots z^{m_p}x, $
  \item $xy^{n_1}z^{m_1}\cdots z^{m_p}x, $
  \item $y^{n_1}z^{m_1}\cdots z^{m_p}x,$
\end{itemize}
for $p, n_{0}, \cdots, n_{p+1}, m_{1}, \cdots, m_p \geqslant 1.$
 
 Thus, we may assume that  $a,b,c$  are linear combinations of basis elements of $A$ given in Corollary~\ref{cor:k-basis_A}, excluding those belonging to $R_1, R_2, R_3,$ respectively. 
 
Since $f \in  \ker d_{3}^{*} $ is a  $2$-cocycle, we have the following relations.
$$(x+y)b - az =0 \quad \text{and} \quad xc-by =0.  $$
Substituting the expressions for $a,b,$ and $c$ into the equations above and comparing coefficients of corresponding terms, we deduce that 
$f$ is a linear combination of   elements of types $A^{-2}$ and $B^{-2}$ listed in the proposition.
\end{proof}

\begin{prop}\label{prop: HH^3}
     The Hochschild cohomology $\HH^{-3}(A)$ of $A$ vanishes. 
\end{prop}
\begin{proof}
Let $f\in \ker d_{3}^{*},$ we write $f = ((x+y)xz \mapsto a, xzy \mapsto b).$
A direct computation shows that  the vector spaces $\{\phi(xzy) \mid \phi \in \im d_3^*\},$ has the basis $R_2$ consisting of:
\begin{itemize}
  \item  $ y^{n_0}, y^{n_0}x, y^{n_0}x y^{n_1}, x, x y^{n_1}, $
  \item  $y^{n_0}xy^{n_1}z^{m_1}\cdots z^{m_p}, $
  \item  $y^{n_0}xy^{n_1}z^{m_1}\cdots z^{m_p}x, $
  \item  $y^{n_0}xy^{n_1}z^{m_1}\cdots z^{m_p}xy^{n_{p+1}}, $
  \item  $xy^{n_1}z^{m_1}\cdots z^{m_p}, $
  \item  $xy^{n_1}z^{m_1}\cdots z^{m_p}x, $
  \item  $xy^{n_1}z^{m_1}\cdots z^{m_p}xy^{n_{p+1}}, $
  \item  $y^{n_1}z^{m_1}\cdots z^{m_p}xy^{n_{p+1}}, $
  \item  $z^{m_1}\cdots z^{m_p}xy^{n_{p+1}}, $
\end{itemize}
for $p, n_{0}, \cdots, n_{p+1}, m_{1}, \cdots, m_p \geqslant 1.$ 

The vector spaces $\{\phi((x+y)xz)  \mid \phi \in \in \im d_3^*, \phi(xzy)=0 \},$has   basis $ R_1$ consisting of:  
\begin{itemize}
  \item  $ y^{n_0},y^{n_0}x y^{n_1},x y^{n_1}, $ 
  \item  $y^{n_0}xy^{n_1}z^{m_1}\cdots z^{m_p}xy^{n_{p+1}},$ 
  \item  $xy^{n_1}z^{m_1}\cdots z^{m_p}xy^{n_{p+1}},$ 
  \item  $y^{n_1}z^{m_1}\cdots z^{m_p}xy^{n_{p+1}},$ 
  \item  $z^{m_1}\cdots z^{m_p}xy^{n_{p+1}}, $ 
\end{itemize}
for $p, n_{0}, \cdots, n_{p+1}, m_{1}, \cdots, m_p \geqslant 1.$ 

Thus, we may assume that  $a,b$  are linear combinations of basis elements of $A$ given in Corollary~\ref{cor:k-basis_A}, excluding those belonging to $R_1, R_2,$ respectively. 

Since $f \in  \ker d_{4}^{*} $ is a  $3$-cocycle, we have the following relations.
$$(x+y)b + ay =0.  $$
Substituting the expressions for $a,b$ into the equations above and comparing coefficients of corresponding terms, we deduce that $f=0.$
\end{proof}

The computation of $\HH^{-4}(A)$ proceeds as follows.
\begin{proof}[Proof of Proposition~\ref{prop: HH^4}] 
Since $d_5^{*} = 0,$ the computation of $\HH^{-4}(A)$ reduces to computing a basis for $\im d_4^{*}$.
A direct computation shows that a basis for the image of   $\im d_4^{*}$ on $(x+y)xzy$ is given by the following elements: 
\begin{itemize}
  \item  $ y^{n_0},   y^{n_0}x y^{n_1}, x, x y^{n_1}, $
  \item  $y^{n_0}xy^{n_1}z^{m_1}\cdots z^{m_p}xy^{n_{p+1}},$
  \item  $xy^{n_1}z^{m_1}\cdots z^{m_p}xy^{n_{p+1}},$
  \item  $y^{n_1}z^{m_1}\cdots z^{m_p}xy^{n_{p+1}},$
  \item  $z^{m_1}\cdots z^{m_p}xy^{n_{p+1}}, $
  \item  $yz^{m_1}\cdots z^{m_p}, $
  \item  $yz^{m_1}\cdots z^{m_p}x, $
  \item  $y^{n_1+1}z^{m_1}\cdots z^{m_p}+x y^{n_1}z^{m_1}\cdots z^{m_p}, $
  \item  $y^{n_1+1}z^{m_1}\cdots z^{m_p}x  +x y^{n_1}z^{m_1}\cdots z^{m_p}x, $
\end{itemize}
for $p, n_0,\cdots,n_{p+1},m_1,\cdots,m_p\geqslant 1.$
Then after quotienting by  $\im d_4^{*}$,  $f\in \ker d_{5}^{*} = \Hom(V_4,A)$ must be a linear combination of the images of $(x+y)xzy$  under the maps listed in the proposition.
\end{proof}

\bigskip

Finally,  we   compute the comparison morphisms between the two-sided Koszul resolution   and the two-sided bar resolution via the Morse matching constructed in Section~\ref{Sect: Comparison morphisms}.
In the calculations that follow, we will use the weighted quiver $\overline{Q_{B}}$ from \cite{CLZ24},
which is derived from $Q_B$ by splitting any arrow weighted by a polynomial into multiple parallel arrows, each carrying a single monomial weight.
For instance, the arrow $  (x,x) \xrightarrow{x \ot 1  + 1\ot x}  x $ in  $Q_B$ is replaced in $\overline{Q_{B}}$ with two paralled arrows
$$ d_2^{0}:  (x,x) \xrightarrow{x\ot 1} x  \quad \text{and} \quad  d_2^2: (x,x) \xrightarrow{1\ot x} x.  $$
Furthermore, the dashed arrows in $\overline{Q_B}^{\widetilde{\M}}$ induced by the three types of arrows of $\M'$ are denoted by $d_{n+2}^{-2}$, $d_{n+3}^{-2}$, and $d_{n+4}^{-2}$, respectively.

	\begin{proof}[Proof of Theorem~\ref{thm:Comparision from K to B}]
		We need only compute the even-length zigzag paths in $\overline{Q_B}^{\widetilde{\M}}$ starting from the critical vertices of $\widetilde{\M}$. 
A direct calculation shows that all even-length zigzag paths starting from the critical vertices of $\mathcal{M}'$ are trivial, except for the following cases.
		$$ \begin{tikzcd}[row sep=1.5em,column sep =3em]
			(x,x)  \ar[rrd,  "d_{2}^{1}{,} 1\otimes  1" near end] & & \\
			&	&yx  \ar[lld, dashrightarrow,   " d_{2}^{-1}{,} 1\otimes  1"' near end]  \\
		(y,x)& &
		\end{tikzcd},    $$
		$$ \begin{tikzcd}[row sep=1.5em,column sep =3em]
			(x,x,z)  \ar[rrd,  "d_{3}^{1}{,} 1\otimes  1" near end] & & \\
			&	&(yx,z)  \ar[lld, dashrightarrow,   " d_{3}^{-1}{,} 1\otimes  1"' near end]  \\
		(y,x,z)& &
		\end{tikzcd},    $$
		$$ \begin{tikzcd}[row sep=1.5em,column sep =3em]
			(x,x,z,y)  \ar[rrd,  "d_{4}^{1}{,} 1\otimes  1" near end] & & \\
			&	&(yx,z,y)  \ar[lld, dashrightarrow,   " d_{4}^{-1}{,} 1\otimes  1"' near end]  \\
		(y,x,z,y)& &
		\end{tikzcd}.   $$
	\end{proof}

\begin{proof}[Proof of Theorem~\ref{thm:Comparision from B to K}]
  It suffices to consider even-length zigzag paths in $\overline{Q_B}^{\widetilde{\M}}$ that  end at the critical vertices of $\widetilde{\M}$.
For this, we turn to $\overline{Q_{B}}^{\M}$, where we observe  that for any thick arrow $(w_1,\cdots,w_n) \to (v_1,\cdots,v_{n-1})$, one of the following holds:
\begin{itemize} 
	\item[(i)] 	$v_1 \cdots v_{n-1}$ is a (not necessarily proper) subword of $w_1 \cdots w_n$; 
	\item[(ii)]  $w_1\cdots w_n $ \textbf{reduces to} an expression involving $v_1 \cdots v_{n-1}$.  That is,
	we have $w_1\cdots w_n = v_1 \cdots v_{n-1} + \sum_i c_i m_i$ in $A$, with both $v_1 \cdots v_{n-1}$ and each $m_i$ being monomials that are smaller than $w_1\cdots w_n$ under the given monomial order.
	\end{itemize}
Moreover, a dotted arrow $(w_1,\cdots, w_n) \dashrightarrow (v_1,\cdots, v_{n+1})$ in $\overline{Q_{B}}^{\M}$ satisfies the word $w_1\cdots w_n =v_1\cdots v_{n+1}.$
Consequently, a zigzag path originating from $(w_1, \dots, w_n)$ to  $(v_1, \dots, v_m)$ in $\overline{Q_{B}}^{\M}$ satisfies the product $w_1 \cdots w_n$ reduces to 	an expression involving $a v_1 \cdots v_m b$ in $A$ in finitely many  steps, for $a,b\in \mathcal{B}.$

We now proceed to compute all even-length zigzag paths ending at the critical vertex $(x,x,z,y)$ in $\overline{Q_B}^{\widetilde{\M}}$. 
 Let $p$ be an even-length zigzag path in $\overline{Q_B}^{\widetilde{\M}}$  ending at $(x,x,z,y)$.  We first determine the starting point of $p.$
	 If the path $p$ contains no dotted arrows that are the reverses of arrows in $\M'$, then it is a zigzag path in $\overline{Q_{B}}^{\M}$.	
	 Consequently, the  start of $p$, denoted by   $(w_1,w_2,w_3,w_4)$, satisfies that the word $w_1w_2w_3w_4$ reduces to an expression involving  $a xxzy b$ in finitely many steps, for $a,b\in \mathcal{B}.$
	 A direct computation shows that only the following elements reduce to  an expression involving   $a xxzy b$  in finitely many steps:
	 $$a' xy^{p_1}x \cdots y^{p_m} xxzy b' \quad \text{and} \quad a' xy^{p_1}x \cdots y^{p_m}xxzxy^{n}x b',$$
	 where  $m\geqslant 0, p_1,\cdots, p_m,n\geqslant 0,$ and $a'x,yb', xb' \in \B$.
	 If $w_1w_2w_3w_4$ is  one of these monomials with $m\geqslant 1,$  then one of $ w_1, w_2,w_3,w_4$ does not lie in $\B_+$,		contradicting the assumption that all $w_i$ are in $\B_+$.
	 Hence $m=0$ and the start of $p$ is either
	 \begin{equation}\label{eq: vertex to xxzy} (ax, x, z, yb) \quad \text{or} \quad (ax,x,zxy^{j}, y^{j'}xb') \end{equation}
	 with $ j,j'\geqslant 0, $ and $ax,yb,xb' \in \B_+. $

Next, assume that the zigzag path $p$ contains dotted arrows from $\M'$, and let the subpath containing the last such dotted arrow be of the form
	 \begin{equation}\label{eq: last dotted from xxzy} (u_1,u_2,u_3,u_4) \dashrightarrow (u_1',u_2',u_3',u_4',u_5') \to (w_1,w_2,w_3,w_4).\end{equation}
	 Therefore, the subpath $p'$ of $p$ that goes from $ (w_1,w_2,w_3,w_4)$ to the final vertex $(x,x,z,y)$ of $p$ is a zigzag path in $\overline{Q_{B}}^{\M}$.
	 	Applying the  discussion in the previous paragraph to $p'$,  we find that  $(w_1,w_2,w_3,w_4)$ is one of the vertices in \eqref{eq: vertex to xxzy}.
 Now, since the dotted arrow in \eqref{eq: last dotted from xxzy} is from $\M'$, it must have the form
$$    (x,yx,z,y) \dashrightarrow  (x,x,x,z,y) .     $$
	 Inductively,  the first dotted arrow from $\M'$ in the path $p$ has the form $  (x,y^{n}x, z,y) \dashrightarrow (x,x,y^{n-1}x,z,y)    $ with $n \geqslant 1.$
	 	Thus, there is a zigzag path in $\overline{Q_{B}}^{\M}$ from the start of $p$  to $ (x,y^{n}x, z,y)$.
	    By an argument analogous to that in the preceding paragraph, we conclude that the starting vertex of $p$ is either
	     \begin{equation}\label{eq: start vertex to xxzy contain M'}   (axy^{i}, y^{i'}x, z, yb) \quad \text{or} \quad   (ax y^{i}, y^{i'} x,zxy^{j}, y^{j'}xb') \end{equation}
	    with $i,i',j,j'\geqslant 0 $ and $ax,yb,xb' \in \B_+. $

Analogously to the $(x, x, z, y)$ case, we conclude that: 
\begin{enumerate}[(\roman*)]
  \item for even-length zigzag paths ending at $(x, x, z)$, the only possible starting points are     $(axy^{i}, y^{i'}x, zb)$, where $i, i' \geqslant 0$ and $ax, z b \in \B_+;$
  \item for those ending at $(x, z, y)$, the only possible starting points are   $ (ax,z,yb) $ and $(ax, zxy^{j}, y^{j'}xb')$, where  $j, j'\geqslant 0 $ and $ax,yb,xb' \in \B_+; $
  \item  for those ending at $(x, x)$, the only possible starting points are $ (axy^{i}, y^{i'}xb),  $  where  $i, i'\geqslant 0 $ and $ax, xb  \in \B_+; $
  \item  for those ending at $(x, z)$, the only possible starting points are $ (ax, zb),  $  where  $ax, zb  \in \B_+; $
  \item  for those ending at $(z, y)$, the only possible starting points are $ (azxy^{j}, y^{j'}x b)   $ and  $ (az, yb'),$  where  $j, j'\geqslant 0 $ and $az, xb, yb' \in \B_+. $
\end{enumerate}

Finally, we enumerate all computed non-trivial even-length zigzag paths that terminate at the critical vertices. 

Using the notation $a = a_1 a_2 \cdots a_n \in \B$ where each $a_i \in \{x,y,z\}$, the paths from $a$ to vertices in $V_1^{\widetilde{\M}}$ are given, for each $0\leqslant i \leqslant n-1$, as follows: 
 $$ \begin{tikzcd}[row sep = 0.5em,column sep =1em]
	 		& & a \ar[lld, dashrightarrow,   "d_{2}^{-1}{,} 1\otimes  1"' near end] \ar[bend left = 90, start anchor={[xshift=15pt]}, dd,above, no head, "\text{pattern repeats } i \text{ times}" ] \\
	 	 (a_1, a_2\cdots a_n)  \ar[rrd,   "d_{2}^{0}{,} a_1\otimes  1"near end ]&  & \\
	 		& 	 & a_2\cdots a_n \ar[dd, dotted, no head, thick,  "\text{weight} : a_2\cdots a_i \ot 1 " ] \\
            & & \\
	 	 	& 	 & a_{i+1}\cdots a_n  \ar[lld, dashrightarrow,   "d_{2}^{-1}{,} 1\otimes  1"' near end] \\
        (a_{i+1}, a_{i+2}\cdots a_n) \ar[rrd,   "d_{2}^{2}{,} 1\otimes  a_{i+2}\cdots a_n"near end ] & & \\
           &     &  a_{i+1}; 
	 	\end{tikzcd}$$
the paths from $ (axy^{i}, y^{i'}xb) $ to $(x,x)$ are given, for each $0\leqslant k \leqslant i+i'$, as follows:  
 $$ \begin{tikzcd}[row sep = 0.5em,column sep =1em]
	 		& & (axy^{i}, y^{i'}xb) \ar[lld, dashrightarrow,   "d_{3}^{-1}{,} 1\otimes  1"' near end] \ar[bend left = 90, start anchor={[xshift=15pt]}, dd,above, no head, "\text{pattern repeats } n \text{ times}" ] \\
	 	 (a_1, a_2\cdots a_nxy^{i}, y^{i'}xb)  \ar[rrd,   "d_{3}^{0}{,} a_1\otimes  1"near end ]&  & \\
	 		& 	 & (a_2\cdots a_nxy^{i}, y^{i'}xb) \ar[dd, dotted, no head, thick,  "\text{weight} : a_2\cdots a_n \ot 1 " ] \\
  & &   \\
	 	 	& 	 & (xy^{i}, y^{i'}xb) \ar[lld, dashrightarrow,   "d_{3}^{-1}{,} 1\otimes  1"' near end] \\
         (x, y^{i}, y^{i'}xb) \ar[rrd,   "d_{3}^{2}{,} 1\otimes  1"near end ] & & \\
           	&  \  & (x, y^{i+i'}xb) \ar[lld, dashrightarrow,   "d_{3}^{-2}{,}  - 1\otimes  1"' near end]\\
            (x, y^{i+i'}x, b) \ar[rrd,   "d_{3}^{3}{,} - 1\otimes  b"near end ] & & \\
         	&  \  & (x, y^{i+i'}x)  \ar[lld, dashrightarrow,  "d_{3}^{-2}{,} 1\otimes  1"' near end]    \ar[bend left = 90, dddd,above, no head, "\text{pattern repeats } k \text{ times}" ]\\
	 	(x,x,y^{i+i'-1}x)\ar[rrd, "d_{3}^{1}{,}1 \otimes  1" near end ] 	\ar[red,rrddd, "d_{3}^{0}{,} x \otimes  1" near end ] & &  \\
	 	 &       &  	(yx,y^{i+i'-1}x) \ar[lld, dashrightarrow,  "  d_{3}^{-1}{,} 1\otimes  1 ~"' near end] \\
	 		(y, x,y^{i+i'-1}x) \ar[rrd, "\!\!\!\!\! d_{3}^{0}{,} y \otimes  1" near end ] &   &   \\
	 			  			  & 	   &   (x,y^{i+i'-1}x) \ar[dd, dotted, no head, thick,  "\text{weight} : (x+y)^{k-1} \ot 1 " ]   \\
  & &  \\
	 		 		  & 	\    &   (x,y^{i+i'-k}x)\ar[lld, dashrightarrow,  "  d_{3}^{-2}{,} 1\otimes  1 ~"' near end]  \\
	(x, x,y^{i+i'-k-1}x) \ar[rrd,  "d_{3}^{3}{,} -1 \otimes  y^{i+i'-k-1}x" near end ] &   &   \\
&  &  (x,x);
	 	\end{tikzcd}$$
the paths from $ (ax, zb) $ to $(x,z)$ are given as follows:  
 $$ \begin{tikzcd}[row sep = 0.5em,column sep = 1em]
	 		& & (ax,zb) \ar[lld, dashrightarrow,   "d_{3}^{-1}{,} 1\otimes  1"' near end] \ar[bend left = 90, start anchor={[xshift=15pt]}, dd,above, no head, "\text{pattern repeats } n \text{ times}" ] \\
	 	 (a_1, a_2\cdots a_nx,zb)  \ar[rrd,   "d_{3}^{0}{,} a_1\otimes  1"near end ]&  & \\
	 		& 	 & (a_2\cdots a_nx,zb) \ar[dd, dotted, no head, thick,  "\text{weight} : a_2\cdots a_n \ot 1 " ] \\
  &  &  \\ 
	 	 	& 	 & (x,zb)  \ar[lld, dashrightarrow,   "d_{3}^{-2}{,} - 1\otimes  1"' near end] \\
       (x,z,b)\ar[rrd,   "d_{3}^{3}{,} -1\otimes  b"near end ] & & \\
           &     &  (x,z); 
	 	\end{tikzcd}$$
the paths from $ (az, yb') $ to $(z,y)$ are given as follows:  
 $$ \begin{tikzcd}[row sep = 0.5em,column sep = 1em]
	 		& & (az,yb') \ar[lld, dashrightarrow,   "d_{3}^{-1}{,} 1\otimes  1"' near end] \ar[bend left = 90, start anchor={[xshift=15pt]}, dd,above, no head, "\text{pattern repeats } n \text{ times}" ] \\
	 	 (a_1, a_2\cdots a_nz,yb')  \ar[rrd,   "d_{3}^{0}{,} a_1\otimes  1"near end ]&  & \\
	 		& 	 & (a_2\cdots a_nz,yb') \ar[dd, dotted, no head, thick,  "\text{weight} : a_2\cdots a_n \ot 1 " ] \\
 &  &  \\
	 	 	& 	 & (z,yb')  \ar[lld, dashrightarrow,   "d_{3}^{-2}{,} - 1\otimes  1"' near end] \\
       (z,y,b')\ar[rrd,   "d_{3}^{3}{,} -1\otimes  b'"near end ] & & \\
           &     &  (z,y); 
	 	\end{tikzcd}$$
the paths from $(azxy^{j}, y^{j'}x b) $ to $(z,y)$ are given as follows:  
 $$ \begin{tikzcd}[row sep = 0.5em,column sep = 1em]
	 		& & (azxy^{j}, y^{j'}x b) \ar[lld, dashrightarrow,   "d_{3}^{-1}{,} 1\otimes  1"' near end] \ar[bend left = 90, start anchor={[xshift=15pt]}, dd,above, no head, "\text{pattern repeats } n \text{ times}" ] \\
	 	 (a_1, a_2\cdots a_nzxy^{j}, y^{j'}xb)  \ar[rrd,   "d_{3}^{0}{,} a_1\otimes  1"near end ]&  & \\
	 		& 	 & (a_2\cdots a_nzxy^{j}, y^{j'}xb) \ar[dd, dotted, no head, thick,  "\text{weight} : a_2\cdots a_n \ot 1 " ] \\
  &  & \\
	 	 	& 	 & (zxy^{j}, y^{j'}xb) \ar[lld, dashrightarrow,   "d_{3}^{-1}{,} 1\otimes  1"' near end] \\
         (z, xy^{j}, y^{j'}xb) \ar[rrd,   "d_{3}^{2}{,} -1\otimes  1"near end ] & & \\
           	&  \  & (z, y^{j+j'+1}xb) \ar[lld, dashrightarrow,   "d_{3}^{-2}{,}  - 1\otimes  1"' near end]\\
            (z, y, y^{j+j'}x b) \ar[rrd,   "d_{3}^{3}{,} - 1\otimes y^{j+j'}x b"near end ] & & \\
         	&  \  & (z,y);
	 	\end{tikzcd}$$
the paths from $(axy^{i}, y^{i'}x, zb) $ to $(x,x,z)$ are given as follows:  
 $$ \begin{tikzcd}[row sep = 0.5em,column sep = 1em]
	 		& & (axy^{i}, y^{i'}x, zb) \ar[lld, dashrightarrow,   "d_{4}^{-1}{,} 1\otimes  1"' near end] \ar[bend left = 90, start anchor={[xshift=15pt]}, dd,above, no head, "\text{pattern repeats } n \text{ times}" ] \\
	 	 (a_1, a_2\cdots a_n xy^{i}, y^{i'}x, zb)  \ar[rrd,   "d_{4}^{0}{,} a_1\otimes  1"near end ]&  & \\
	 		& 	 & (a_2\cdots a_nxy^{i}, y^{i'}x, zb)\ar[dd, dotted, no head, thick,  "\text{weight} :a_2\cdots a_n \ot 1 " ] \\
 & & \\
&\  &  	( xy^{i}, y^{i'}x, zb) \ar[lld, dashrightarrow,  "d_{4}^{-1}{,} 1\otimes  1"' near end] \\
	 		(x,y^{i}, y^{i'}x, zb)\ar[rrd,  "d_{4}^{2}{,} 1 \otimes  1" near end ]  & \  &  \\
	 		&  \  & 	(x, y^{i+i'}x, zb)   \ar[lld, dashrightarrow,  "d_{4}^{-3}{,}   1\otimes  1"' near end]\\
(x, y^{i+i'}x, z,b) \ar[rrd,  "d_{4}^{4}{,}  1 \otimes  b" near end ] & \ & \\
	 		&  \  & 	(x, y^{i+i'}x, z)  \ar[lld, dashrightarrow,  "d_{4}^{-2}{,} 1\otimes  1"' near end]    \ar[bend left = 90, dddd,above, no head, "\text{pattern repeats } i+i' \text{ times}" ]\\
	 	(x,x,y^{i+i'-1}x,z)\ar[rrd, "d_{4}^{1}{,}1 \otimes  1" near end ] 	\ar[red,rrddd, "d_{4}^{0}{,} x \otimes  1" near end ] & &  \\
	 	 &       &  	(yx,y^{i+i'-1}x,z) \ar[lld, dashrightarrow,  "  d_{4}^{-1}{,} 1\otimes  1 ~"' near end] \\
	 		(y, x,y^{i+i'-1}x,z) \ar[rrd, "\!\!\!\!\! d_{4}^{0}{,} y \otimes  1" near end ] &   &   \\
	 			  			  & 	   &   (x,y^{i+i'-1}x,z) \ar[dd, dotted, no head, thick,  "\text{weight} : (x+y)^{i+i'-1} \ot 1 " ]   \\
& & \\
	 		 		  & 	\    &   (x,x,z)  \\
	 	\end{tikzcd}$$
the paths from $ (ax,z,yb)$ to $(x,z,y)$ are given as follows:  
 $$ \begin{tikzcd}[row sep = 0.5em,column sep = 1em]
	 		& & (ax,z,yb) \ar[lld, dashrightarrow,   "d_{4}^{-1}{,} 1\otimes  1"' near end] \ar[bend left = 90, start anchor={[xshift=15pt]}, dd,above, no head, "\text{pattern repeats } n \text{ times}" ] \\
	 	 (a_1, a_2\cdots a_nx,z,yb)  \ar[rrd,   "d_{4}^{0}{,} a_1\otimes  1"near end ]&  & \\
	 		& 	 & (a_2\cdots a_nx,z,yb) \ar[dd, dotted, no head, thick,  "\text{weight} : a_2\cdots a_n \ot 1 " ] \\
& & \\
	 	 	& 	 & (x,z,yb)  \ar[lld, dashrightarrow,   "d_{4}^{-3}{,}   1\otimes  1"' near end] \\
       (x,z,y,b)\ar[rrd,   "d_{4}^{4}{,}  1\otimes  b"near end ] & & \\
           &     &  (x,z,y); 
	 	\end{tikzcd}$$
the paths from $(ax, zxy^{j}, y^{j'}xb')$ to $(x,z,y)$ are given as follows:  
 $$ \begin{tikzcd}[row sep = 0.5 em,column sep = 1em]
	 		& & (ax, zxy^{j}, y^{j'}xb') \ar[lld, dashrightarrow,   "d_{4}^{-1}{,} 1\otimes  1"' near end] \ar[bend left = 90, start anchor={[xshift=15pt]}, dd,above, no head, "\text{pattern repeats } n \text{ times}" ] \\
	 	 (a_1, a_2\cdots a_nx,zxy^{j}, y^{j'}xb')  \ar[rrd,   "d_{4}^{0}{,} a_1\otimes  1"near end ]&  & \\
	 		& 	 & (a_2\cdots a_nx,zxy^{j}, y^{j'}xb') \ar[dd, dotted, no head, thick,  "\text{weight} : a_2\cdots a_n \ot 1 " ] \\
 &  &  \\
	 	 	& 	 & (x,zxy^{j}, y^{j'}xb')  \ar[lld, dashrightarrow,   "d_{4}^{-2}{,}   - 1\otimes  1"' near end] \\
       (x,z, xy^{j}, y^{j'}xb')\ar[rrd,   "d_{4}^{3}{,}  1\otimes 1"near end ] & & \\
           &     &   (x,z,y^{j+j'+1}xb') \ar[lld, dashrightarrow,   "d_{4}^{-3}{,}   1\otimes  1"' near end] \\
           (x,z,y,y^{j+j'}xb')\ar[rrd,   "d_{4}^{4}{,}  1\otimes y^{j+j'}xb'"near end ]  & & \\ 
            &  &  (x,z,y);
	 	\end{tikzcd}$$
the paths from $(a x y^{i}, y^{i'} x, z, y b)$ to $(x, x, z, y)$ are given as follows:  
	 $$ \begin{tikzcd}[row sep = 0.5em,column sep = 0.5em]
	 		& & (axy^{i}, y^{i'}x, z, yb) \ar[lld, dashrightarrow,   "d_{5}^{-1}{,} 1\otimes  1"' near end] \ar[bend left = 90, start anchor={[xshift=15pt]}, dd,above, no head, "\substack{\text{pattern repeats } \\ n \text{ times}}" ] \\
	 	 (a_1, a_2\cdots a_n xy^{i}, y^{i'}x, z, yb)  \ar[rrd,   "d_{5}^{0}{,} a_1\otimes  1"near end ]&  & \\
	 		& 	 & (a_2\cdots a_nxy^{i}, y^{i'}x, z, yb)\ar[dd, dotted, no head, thick,  "\text{weight} :a_2\cdots a_n \ot 1 " ] \\
&  &  \\
&\  &  	( xy^{i}, y^{i'}x, z, yb) \ar[lld, dashrightarrow,  "d_{5}^{-1}{,} 1\otimes  1"' near end] \\
	 		(x,y^{i}, y^{i'}x, z, yb)\ar[rrd,  "d_{5}^{2}{,} 1 \otimes  1" near end ]  & \  &  \\
	 		&  \  & 	(x, y^{i+i'}x, z, yb)   \ar[lld, dashrightarrow,  "d_{5}^{-4}{,} - 1\otimes  1"' near end]\\
(x, y^{i+i'}x, z, y,b) \ar[rrd,  "d_{5}^{5}{,} -1 \otimes  b" near end ] & \ & \\
	 		&  \  & 	(x, y^{i+i'}x, z, y)  \ar[lld, dashrightarrow,  "d_{5}^{-2}{,} 1\otimes  1"' near end]    \ar[bend left = 90, dddd,above, no head, "\substack{\text{pattern repeats }\\ i+i' \text{ times}}" ]\\
	 	(x,x,y^{i+i'-1}x,z,y)\ar[rrd, "d_{5}^{1}{,}1 \otimes  1" near end ] 	\ar[red,rrddd, "d_{5}^{0}{,} x \otimes  1" near end ] & &  \\
	 	 &       &  	(yx,y^{i+i'-1}x,z,y) \ar[lld, dashrightarrow,  "  d_{5}^{-1}{,} 1\otimes  1 ~"' near end] \\
	 		(y, x,y^{i+i'-1}x,z,y) \ar[rrd, "\!\!\!\!\! d_{5}^{0}{,} y \otimes  1" near end ] &   &   \\
	 			  			  & 	   &   (x,y^{i+i'-1}x,z,y) \ar[dd, dotted, no head, thick,  "\text{weight} : (x+y)^{i+i'-1} \ot 1 " ]   \\
 &  &  \\
	 		 		  & 	\    &   (x,x,z,y);  \\
	 	\end{tikzcd}$$
 the paths from $(axy^{i}, y^{i'}x, zxy^{j}, y^{j'}xb')$ to $(x, x, z, y)$  are given as follows:  
$$	 	\begin{tikzcd}[row sep = 0.5em,column sep = 0.5em]
	 		& & (axy^{i}, y^{i'}x, zxy^{j}, y^{j'}xb') \ar[lld, dashrightarrow, "d_{5}^{-1}{,} 1\otimes  1"' near end]\ar[bend left = 90, start anchor={[xshift=15pt]}, dd,above, no head, "\substack{\text{pattern } \\ \text{repeats } \\ n \text{ times}}" ] \\
	 		(a_1,  a_2\cdots a_n xy^{i}, y^{i'}x, zxy^{j}, y^{j'}xb')  \ar[rrd,   "d_{5}^{0}{,} a_1\otimes  1"near end ]&  & \\
	 		&   & 	( a_2\cdots a_n xy^{i}, y^{i'}x, zxy^{j}, y^{j'}xb')	\ar[dd, dotted, no head, thick,  "\text{weight} : a_2\cdots a_n \ot 1 " ]   \\
& & \\
	 		&\  &  		(xy^{i}, y^{i'}x, zxy^{j}, y^{j'}xb') \ar[lld, dashrightarrow,  "d_{5}^{-1}{,} 1\otimes  1"' near end] \\
	 		(x, y^{i}, y^{i'}x, zxy^{j}, y^{j'}xb') \ar[rrd,  "d_{5}^{2}{,} 1 \otimes  1" near end ]  & \  &  \\
	 		&  \  & 	(x,y^{i+i'}x, zxy^{j}, y^{j'}xb')   \ar[lld, dashrightarrow,  "d_{5}^{-3}{,} 1\otimes  1"' near end]\\
	 		(x,y^{i+i'}x, z, xy^{j}, y^{j'}xb') \ar[rrd,  "d_{5}^{4}{,} -1 \otimes  1" near end ]  & \  &  \\
	 		&  \  & 	(x,y^{i+i'}x, z, y^{j+j'+1}xb')   \ar[lld, dashrightarrow,  "d_{5}^{-4}{,} -1\otimes  1"' near end]\\
	 		(x,y^{i+i'}x, z, y,  y^{j+j'}xb') \ar[rrd,  "d_{5}^{5}{,} -1 \otimes   y^{j+j'}xb'" near end ]  & \  &  \\
	 		&  \  & 	(x,y^{i+i'}x, z,y)   \ar[lld, dashrightarrow,  "d_{5}^{-2}{,} 1\otimes  1"' near end]   \ar[bend left = 90, dddd,above, no head, "\substack{\text{pattern} \\\text{repeats } \\ i+i' \text{ times}}"]\\
	 	 	(x,x,y^{i+i'-1}x,z,y)\ar[rrd, "d_{5}^{1}{,}1 \otimes  1" near end ] 	\ar[red,rrddd, "d_{5}^{0}{,} x \otimes  1" near end ] & &  \\
	 	 &       &  	(yx,y^{i+i'-1}x,z,y) \ar[lld, dashrightarrow,  "  d_{5}^{-1}{,} 1\otimes  1 ~"' near end] \\
	 		(y, x,y^{i+i'-1}x,z,y) \ar[rrd, "\!\!\!\!\! d_{5}^{0}{,} y \otimes  1" near end ] &   &   \\
	 			  			  & 	    &   (x,y^{i+i'-1}x,z,y) \ar[dd, dotted, no head, thick,  "\text{weight} : (x+y)^{i+i'-1} \ot 1 " ]  \\
& & \\
	 			  			  & 	   &   (x,x,z,y).  \\
	 	\end{tikzcd}$$

\end{proof}

\section{The Tamarkin--Tsygan Calculus}\label{Sect: TTCalculus}

This appendix recalls some basic concepts of  Tamarkin--Tsygan calculus, adapting the definitions from   \cite{LZZ16}; see also \cite{W19}.

We begin by recalling the definitions of Gerstenhaber algebra and Tamarkin--Tsygan calculus.

\begin{defn}
 A \textbf{Gerstenhaber algebra} over a field $\bfk$ is a triple $(H^{\bullet},\cup,[-,-]),$ where $H^{\bullet} = \oplus_{n\in \mathbb{Z}} H^{n}$ is a graded space over $\bfk$ equipped with two bilinear maps:
  the cup product of degree $0$
 $$\cup:H^{\bullet} \otimes H^{\bullet}\to H^{\bullet}$$
and a Lie bracket of degree $-1$
  $$[-,-]:H^{\bullet}\otimes H^{\bullet}\to H^{\bullet}$$ such that
  \begin{itemize}
  \item[(i)] $(H^{\bullet},\cup)$ is a graded commutative algebra;
  \item[(ii)] $(H^{\bullet}[1], [-,-])$ is a graded Lie algebra;
  \item[(iii)] $[-,-]$ satisfies the graded Leibniz rule with respect to $\cup$: for any $f,g,h \in H^{\bullet},$
 $$ [f,g\cup h] = [f,g]\cup h + (-1)^{(|f|+1)|g|}g\cup [f,h].$$
  \end{itemize}
\end{defn}

 \begin{defn}
   A \textbf{Tamarkin--Tsygan calculus} or a \textbf{differential calculus} is  the data  $(H^{\bullet}, \cup, [-,-], H_{\bullet}, \cap, B)$ of $\mathbb{Z}$-graded vector spaces satisfying the following properties:
\begin{itemize}
     \item[(i)] $(H^\bullet,\cup,[-,-])$ is a Gerstenhaber algebra;
     \item[(ii)] $H_\bullet$ is a graded module over    $(H^{\bullet},\cup)$   via the cap product
      $$\cap: H_{m} \ot H^{n}  \to H_{m-n};$$
     \item[(iii)]  the Connes' differential
     $$ B: H_{\bullet} \to H_{\bullet +1} $$
      satisfies   $B^2=0$ and
      $$[[B, i_f]_{\mathrm{gr}}, i_g]_{\mathrm{gr}} = i_{[f,g]},$$
      for $f \in H^{n},g\in H^{m},$ where $i_f : H_{p} \to H_{p -n} $ is defined as $ i_f(x)  = (-1)^{pn}x \cap f $ and $[-,-]_{\mathrm{gr}}$ is the graded commutator, i.e., $[\alpha,\beta] = \alpha \beta -(-1)^{|\alpha||\beta|} \beta \alpha.$
\end{itemize}
 \end{defn}

Let $A$ be an arbitrary  augmented  associative algebra over $\bfk$.
Denote  $\overline{A} = A / \bfk \mathrm{1}_A,$ and let  $A^e : = A \ot A^{\mathrm{op}}$ be the enveloping algebra of $A$.

The Hochschild cohomology  of $A$ is  defined as the graded vector spaces $\HH^\bullet(A):=\Ext^{-\bullet}_{A^e}(A,A),$
where we adopt the convention that its grading is concentrated in \textbf{non-positive} degrees.
 The Hochschild homology  of $A$ is defined as the graded vector spaces $\HH_\bullet(A):=\Tor^{A^e}_\bullet(A,A).$

There is an $A^{e}$-module projective resolution of $A$, called the reduced two-sided bar resolution and denoted by $B(A,A)$, which is defined as follows.
\begin{itemize}
  \item[(i)] For $n\geqslant 0,$ let $B(A,A)_n = A \ot \overline{A}^{\ot n} \ot A;$
  \item[(ii)] the differential $d_n:B(A,A)_n \to B(A,A)_{n-1}$ sends  $a_0\otimes  a_1\otimes \cdots \otimes a_{n}\otimes a_{n+1}$ to
  $$ \sum_{i=0}^{n} (-1)^i a_0\otimes\cdots \otimes a_ia_{i+1}\otimes \cdots \otimes a_{n+1}.$$
\end{itemize}

The following two complexes yield a computational approach to  Hochschild cohomology and homology.
\begin{defn}\label{defn:HHChainComplex}
	\begin{itemize}
		 \item[(i)]  The \textbf{Hochschild cochain complex} of $A$ is defined as $C^\bullet(A):=\Hom_{A^e}(B(A,A)_{\bullet},A)$.   	
		 \item[(ii)]  The \textbf{Hochschild chain complex} of $A$ is defined as the complex $C_\bullet(A) := A \otimes_{A^e} B(A, A)_\bullet$.
	\end{itemize}
\end{defn}
In particular, for $n\geqslant 0,$  we have
$$C^{n}(A) = \Hom_{A^e}(A \ot \overline{A}^{\ot n} \ot A, A) \cong \Hom(\overline{A}^{\ot n}, A)$$
 and
 $$C_{n}(A) = A\ot_{A^{e}} (A \ot \overline{A}^{\ot n} \ot A) \cong A \ot \overline{A}^{\ot n}.$$

Now, we provide the definitions of the cup product, cap product, Gerstenhaber bracket, and Connes' differential for the Hochschild homology and cohomology of $A$.

\begin{prop-def}\cite{G63}\label{defn:CupProduct}
 Let $f\in C^{n}(A), g\in C^{m}(A),$ the \textbf{cup product}  $f\cup g  \in C^{m+n}(A) = \overline{A}^{\ot n}$ is defined as
 $$f\cup g(a_1\ot \cdots a_{m+n}) : = f(a_1\ot \cdots \ot a_{n}) g(a_{n+1}\ot \cdots \ot a_{m+n}).$$
 This cup product induces a well-defined  product on $\HH^{\bullet}(A)$ of degree $0$
 $$\cup: \HH^{-n}(A) \ot \HH^{-m}(A) \to  \HH^{-m-n}(A). $$
\end{prop-def}

\begin{prop-def}\cite{G63}\label{defn:GBracket}
    Let $f\in C^{n}(A)$ and $g\in C^{m}(A).$ For $1\leqslant i \leqslant n,$  define $f\overline{\circ}_i g \in C^{m+n-1}(A)$ as follows.
    \begin{itemize}
      \item[(i)] If $n,m \geqslant 1,$ then
      \begin{align*}
         (f\overline{\circ}_i g) (a_1\ot \cdots \ot  a_{m+n-1})    : =&    f(a_1 \ot \cdots \ot a_{i-1} \ot g(a_{i}\ot \cdots \ot a_{i+m-1})  \\
         &      \ot a_{i+m} \ot \cdots \ot a_{m+n-1});
      \end{align*}
\item[(ii)]  if $n\geqslant 1, m = 0,$ then $g\in A$ and
$$(f\overline{\circ}_i g)(a_1\ot \cdots \ot a_{n-1}) : = f(a_1\ot \cdots \ot a_{i-1} \ot g \ot a_{i} \ot \cdots \ot a_{n-1}); $$
\item[(iii)] for other case, $f\overline{\circ}_i g = 0.$
    \end{itemize}
The \textbf{Gerstenhaber bracket} of $f$ and $g$ is given by
 $$[f,g] : = f\overline{\circ} g -(-1)^{(n-1)(m-1)} g\overline{\circ} f  , $$
 where $f\overline{\circ} g = \sum_{i=1}^{n} (-1)^{(m-1)(i-1)} f\overline{\circ}_{i}g.$
The above $[-,-]$  induces a well-defined  Lie bracket on $\HH^{\bullet}(A)$  of degree $1$
$$ [-,-]: \HH^{-m}(A) \ot \HH^{-n}(A) \to  \HH^{-m-n+1}(A),$$
such that $(\HH^{-\bullet}(A), \cup, [-,-])$ is a Gerstenhaber algebra.
\end{prop-def}

\begin{prop-def}\label{defn:CapProduct}
   Let $f\in C^{n}(A), z = a_0 \ot \overline{a_1}\otimes \cdots \otimes \overline{a_m} \in C_{m}(A)$, the \textbf{cap product} $z\cap f \in C_{m-n}(A)$ is defined as
   $$ z\cap f : =  (-1)^{nm}(a_0 f(\overline{a_1} \otimes \cdots \otimes \overline{a_n})) \otimes \overline{a_{n+1}}\otimes\cdots\otimes \overline{a_m}. $$
   This cap product induces a well-defined map of degree $0$ on the level of homology
   $$\cap : \HH_{m}(A) \ot \HH^{-n}(A) \to \HH_{m-n}(A).$$
\end{prop-def}

\begin{prop-def}\label{defn:CDifferential}
   Let $z = a_0 \ot \overline{a_1}\otimes \cdots \otimes \overline{a_m} \in C_{n}(A),$ the \textbf{Connes' differential} $B(z) \in C_{n+1}(A)$  is defined as
   $$ B(z) : = \sum_{i=0}^n (-1)^{ni} 1\otimes \overline{a_{i+1}}\otimes\cdots\otimes \overline{a_n}\otimes \overline{a_0}\otimes\cdots\otimes \overline{a_i}.$$
It  induces a well-defined map of degree $1$ on the level of homology
$$B: \HH_{n}(A) \to \HH_{n+1}(A).$$
\end{prop-def}

\begin{prop} \cite{TT00}
    The data $(\HH_\bullet(A),\cup,[-,-], \HH^{-\bullet}(A), \cap, B)$  is a Tamarkin--Tsygan calculus.
\end{prop}

\end{document}